\newtheorem{theorem}{Theorem}[section]
\newtheorem{lemma}[theorem]{Lemma}
\newtheorem{proposition}[theorem]{Proposition}
\newtheorem{addendum}[theorem]{Addendum}
\newtheorem{corollary}[theorem]{Corollary}
\newtheorem{question}{Question}
\newtheorem{ltheorem}{Theorem} 
\newtheorem{lcorollary}[ltheorem]{Corollary}
\newtheorem*{maintheorem}{Main Theorem}
\newtheorem*{claim}{Claim}
\theoremstyle{definition}
\newtheorem{definition}[theorem]{Definition}
\newtheorem{remark}[theorem]{Remark}
\newcommand{\cG}{\mathcal{G}}
\def\real{\mathbb{R}}
\def\integer{\mathbb{Z}}
\def\natural{\mathbb{N}}
\def\supp{\operatorname{supp}}
\def\vol{\operatorname{vol}}
\def\id{\operatorname{id}}
\def\cF{\mathcal{F}}
\def\cW{\mathcal{W}}
\def\fh{\mathfrak{h}}
\def\cQ{\mathcal{Q}}
\def\cP{\mathcal{P}}
\def\cN{\mathcal{N}}
\def\cA{\mathcal{A}}
\def\quand{\quad\text{and}\quad}
\def\tmu{\tilde{\mu}}
\def\proj{\mathbb{R}P^{d-1}}
\def\GL{GL(d,\real)}
\def\Dif{\operatorname{Diff}}
\def\Gap{\operatorname{H}^{tr}}
\def\Dif{\operatorname{Diff}}
\newcommand{\norm}[1]{{\left\lVert  #1  \right\rVert}}
\title[Invariance principle and non-compact center foliations]{Invariance principle\\ and non-compact center foliations}
\author{Sylvain Crovisier, Mauricio Poletti}
\date{\today}
\subjclass[2010]{37C40, 37D25, 37D30}
\keywords{Lyapunov exponents, Partial hyperbolicity, Invariant measures}
\thanks{S.C. and M.P. were partially supported by the ERC project 692925 NUHGD.
M.P. was also partially supported by Instituto Serrapilheira, grant ``Jangada Din\^amica: Impulsionando Sistemas Din\^amicos na Regi\~ao Nordeste" and Fondation Louis D.-Institut de France (project coordinated by Marcelo Viana).}
\newcommand{\information}{{
  \bigskip
  \footnotesize
	
  \textbf{Sylvain Crovisier}: \textsc{CNRS-Laboratoire de Math\'ematiques d'Orsay, UMR 8628, Universit\'e Paris-Saclay, Orsay Cedex 91405, France} \par\nopagebreak
  \textit{E-mail:} \texttt{sylvain.crovisier@universite-paris-saclay.fr}
}
{
  \bigskip
  \footnotesize
	
  \textbf{Mauricio Poletti}: \textsc{Departamento de Matem\'atica, Universidade Federal do Cear\'a, Campus do PICI, Bloco 914, CEP 60455-760.  Fortaleza – CE, Brasil.}\par\nopagebreak
  \textit{E-mail:} \texttt{mpoletti@mat.ufc.br}
}}
\begin{document}

\begin{abstract}
We prove a generalization of a so called ``invariance principle" for partially hyperbolic diffeomorphisms: if an invariant probability measure has all its center Lyapunov exponents equal to zero then the measure admits a center disintegration that is invariant by stable and unstable holonomies.  This was known for systems admitting a foliation by compact center leaves, and we extend it to a larger class which contains discretized Anosov flows.

We use our result to classify measures of maximal entropy and study physical measures for perturbations of the time-one map of Anosov flows.
\end{abstract}

\maketitle


\section{Introduction}
The objective of this work is to study the relation between two objects that are on the core of hyperbolic dynamics: \emph{invariant foliations} and \emph{invariant measures}.
We can relate them by disintegrating the measures along the leaves of the foliations. 

The properties of disintegrated measures along the leaves of the foliations give important information on the ergodic properties of the dynamics.
An example of such relation is given by Ledrappier-Young~\cite{LY85b} equality which relates the entropy, the Lyapunov exponents and the dimension of the disintegration of invariant measure along the unstable manifolds. 

In partially hyperbolic dynamics the existence of a non-hyperbolic center direction leads to dynamical behaviors that may be very different from hyperbolic systems. The hyperbolicity along the center direction is measured by the center Lyapunov exponents. \emph{What happens when the center Lyapunov exponents do vanish?} 

It is expected that measures with zero center exponents satisfy some rigidity. Such a property was proved for linear cocycles by Ledrappier~\cite{Le86} and extended by Avila and Viana~\cite{Extremal} for smooth cocycles: they showed that the center disintegration of invariant measures with zero center exponents is invariant by a family of holonomies.
These results are known as ``Invariance Principle"\footnote{It is different from the Invariance Principle in probability.} (see Section~\ref{ss.invariant-measures}).

There are many applications of the Invariance Principle. To cite some: positivity of Lyapunov exponents \cite{Almost, AvDmZh-2023, ASV13, LMY2018, Pol16, ObPol}, existence of physical measures~\cite{ViY13}, properties of the measures of maximal entropy \cite{hertz-hertz-tahzibi-ures}, rigidity of the perturbations of time-one maps of Anosov flows \cite{AVW11}, rigidity of exponents for geodesic flows \cite{Butler-2018}, Zimmer's conjecture~\cite{Brown-Dm-Zh-2022}.  

One of the main hypothesis in \cite{Extremal} is that the system preserves a fiber bundle with smooth compact fibers. In particular to apply these results to partially hyperbolic systems, the center foliation is required to be compact and to form a fiber bundle.

In this work we prove a more general version of the Invariance Principle for partially hyperbolic systems whose center does not need to be compact nor to be a fiber bundle. This allows to extend many previous results about center exponents to more general partially hyperbolic dynamics. 

Before stating the setting and our main result, we present two applications about perturbations of time-one maps of Anosov flows.
The first one deals with measures of maximal entropy and follows from our Main Theorem and \cite{buzzi-fisher-tahzibi}. Compare with \cite{hertz-hertz-tahzibi-ures} which addresses the case of a center foliation which is a circle fiber bundle. 

\begin{ltheorem}\label{thm.MME.An.Fl}
If $(\phi_t)$ is a $C^r$ transitive Anosov flow ($2\leq r\leq \infty$) on a compact manifold $M$, then there is a $C^1$ open set $\mathcal{U}\subset\Dif^r(M)$ whose $C^r$ closure contains $\phi_1$, such that any $f\in \mathcal{U}$ admits exactly two ergodic measures of maximal entropy: one with positive center Lyapunov exponent and one with negative center Lyapunov exponent. Both measures are Bernoulli.  
\end{ltheorem}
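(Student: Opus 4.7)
\emph{Plan of proof.} The strategy is to combine the invariance principle provided by Theorems~\ref{t.inv.princ} and~\ref{thm.continuous.disint} with the recent analysis of measures of maximal entropy for perturbations of time-one maps of Anosov flows by Buzzi--Fisher--Tahzibi~\cite{buzzi-fisher-tahzibi} and Climenhaga--Pesin--Zelerowicz~\cite{climenhaga-pesin-zelerowicz}, in order to eliminate the possibility of an ergodic m.m.e.\ with vanishing center Lyapunov exponent.

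\emph{Setup of the open set $\mathcal{U}$.} By~\cite{BFFP, santiago}, the property of being a discretized Anosov flow (hence partially hyperbolic, dynamically coherent, quasi-isometric in the center, with $\dim E^c=1$) is $C^1$-open near $\phi_1$; by~\cite{AVW11}, accessibility is $C^1$-open and $C^1$-dense in this neighborhood; finally, by~\cite{buzzi-fisher-tahzibi,climenhaga-pesin-zelerowicz}, every ergodic m.m.e.\ of such an $f$ has local cs$\times$u- and cu$\times$s-product structure, and there are finitely many of them. Take $\mathcal{U}$ to be the intersection of these three $C^1$-open (and for the second, $C^1$-dense) neighborhoods of $\phi_1$; its closure contains $\phi_1$ and every $f\in\mathcal{U}$ satisfies the hypotheses of Theorems~\ref{t.inv.princ} and~\ref{thm.continuous.disint}.

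\emph{Ruling out zero center exponent.} Let $\mu$ be an ergodic m.m.e.\ of some $f\in\mathcal{U}$ and suppose for contradiction that $\lambda^c(\mu)=0$. Since $f$ is $C^r$ with $r>1$ and the only non-hyperbolic Oseledets direction is the one-dimensional center, the Ledrappier--Young formula together with~\cite{brown_21} yields $h^u(f,\mu)=h(f,\mu)$, and the same argument applied to $f^{-1}$ gives $h^s(f,\mu)=h(f,\mu)$. Combined with the local product structure, Theorem~\ref{thm.continuous.disint} produces a continuous family of local center measures $\{\nu^c_x\}_{x\in\supp(\mu)}$ which is $f$-, $s$- and $u$-invariant. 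Exploiting accessibility of $f$, the global holonomies from Proposition~\ref{p.global-holonomy} propagate $\nu^c_{x_0}$ along any $su$-path, so the family $\{\nu^c_x\}$ is uniquely determined by its germ at a single point, up to a global scalar. This rigidity forces $\mu$ to be invariant under the reparametrized topological center flow of $f$ (i.e.\ under the continuous $\real$-action tangent to $\cW^c$), hence to project onto a measure of maximal entropy of the Anosov flow $\phi_t$ via the topological conjugacy between the center foliations of $f$ and $\phi_t$. Since the Bowen--Margulis measure is the unique m.m.e.\ of $\phi_t$, this shows that $\mu$ is determined by $\mu_{BM}$; choosing $\mathcal{U}$ small enough so that no $f\in\mathcal{U}$ preserves the push-forward of $\mu_{BM}$ under the orbit-equivalence (a codimension-one condition, open and dense near $\phi_1$ outside $\phi_1$ itself), we obtain the desired contradiction.

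\emph{Counting m.m.e.\ and Bernoulli property.} Every ergodic m.m.e.\ of $f\in\mathcal{U}$ is therefore hyperbolic, with nonzero Lyapunov exponents in $E^s$, $E^c$ and $E^u$. The classification of~\cite{buzzi-fisher-tahzibi} then gives at most one ergodic m.m.e.\ with positive center exponent and at most one with negative center exponent. Existence of both sign classes follows from upper semicontinuity of m.m.e.: any weak-$*$ limit of ergodic m.m.e.\ of $f_n\to\phi_1$ must be the Bowen--Margulis measure $\mu_{BM}$ of $\phi_1$, which has zero center exponent; by the above, for $f\in\mathcal{U}$ no ergodic m.m.e.\ has zero center exponent, so both signs must occur (otherwise a single m.m.e.\ of fixed exponent sign would converge to $\mu_{BM}$, which is impossible by a standard upper-semicontinuity of Lyapunov exponents combined with the non-vanishing established above). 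Finally, each ergodic m.m.e.\ is hyperbolic and $C^r$, so by the Pesin--Ornstein machinery (as used in~\cite{buzzi-fisher-tahzibi}) it is Bernoulli.

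\emph{Main obstacle.} The crux of the argument is the third paragraph: extracting, from a continuous $su$-invariant center disintegration, enough rigidity to conclude that $\mu$ must essentially come from the Bowen--Margulis measure of the unperturbed flow, and then arranging the open set $\mathcal{U}$ so that this conclusion is incompatible with $f\neq\phi_1$. This is precisely the step where we use that the family $\{\nu^c_x\}$ is \emph{continuous} on the whole support of $\mu$ (Theorem~\ref{thm.continuous.disint}) rather than merely defined almost everywhere, and that the quasi-isometric condition gives globally well-defined holonomies through Proposition~\ref{p.global-holonomy}.
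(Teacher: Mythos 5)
Your proof takes a genuinely different route from the paper's, but the key step contains gaps that don't work as stated.

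\textbf{How the paper argues.} The paper starts from the open set $\mathcal{V}$ of~\cite[Theorem~1.1]{buzzi-fisher-tahzibi} and then shows that the subset $\mathcal{U}\subset\mathcal{V}$ of diffeomorphisms possessing a compact center leaf on which the restriction is Morse--Smale is open and dense (a small perturbation along a fixed compact center circle achieves this). For $f\in\mathcal{U}$, the existence of an ergodic m.m.e.\ with zero center exponent is ruled out by Theorem~\ref{thm.MME}: that theorem (whose proof, via Theorem~\ref{thm.continuous.disint}, produces a continuous, $f$-invariant, $su$-invariant center disintegration on all of $\supp\mu=M$) forces the center disintegration to be non-atomic, while on a Morse--Smale compact center leaf the only $f$-invariant (Radon, up to scalar) measures are atomic. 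This is a direct, local contradiction. Afterwards, \cite[Theorem~1.1]{buzzi-fisher-tahzibi} is invoked again to conclude that there are exactly two ergodic m.m.e.\ of opposite central sign and that they are Bernoulli.

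\textbf{Gaps in your argument.} Your third paragraph is where the argument breaks down. (1) From the continuous $su$-invariant center disintegration you correctly deduce (as in Proposition~\ref{p.su.1d.implies.flow}) that $f$ is the time-one map of a continuous center flow. But you then assert that ``this rigidity forces $\mu$ to be invariant under the reparametrized topological center flow'': this does not follow. An $f$-invariant measure need not be invariant under a flow whose time-one map is $f$ (take any flow and a periodic orbit of period irrational with respect to $1$, for instance). (2) The claim that $\mu$ ``projects onto a measure of maximal entropy of $\phi_t$ via the topological conjugacy between the center foliations'' is not established; there is no canonical such conjugacy, and no reason an $f$-m.m.e.\ should push forward to a $\phi_t$-m.m.e.\ under orbit equivalence (entropy is not preserved by orbit equivalence in general). (3) The final step---``choosing $\mathcal{U}$ small enough so that no $f\in\mathcal{U}$ preserves the push-forward of $\mu_{BM}$''---is not a well-defined $C^1$-open condition and, more importantly, even if it were, it would not by itself contradict anything, since you have not shown that $\mu$ \emph{equals} that push-forward. (4) Your existence argument for both signs (``otherwise a single m.m.e.\ of fixed exponent sign would converge to $\mu_{BM}$, which is impossible'') is also not a contradiction: a sequence of positive-center-exponent m.m.e.'s converging to $\mu_{BM}$ with exponent tending to zero is perfectly consistent with upper semicontinuity. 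The paper avoids all of this by simply citing~\cite[Theorem~1.1]{buzzi-fisher-tahzibi}, which already gives the dichotomy once zero exponent is excluded.

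\textbf{Summary.} The missing idea is the Morse--Smale compact center leaf: it converts the abstract, global rigidity you are trying to exploit into a one-dimensional, local statement (an $f$-invariant Radon measure on a Morse--Smale circle must be atomic), where the contradiction with Theorem~\ref{thm.MME} is immediate and the required openness is trivial.
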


We also prove some rigidity for maps close to $\phi_1$ whose measure of maximal entropy has vanishing center Lyapunov exponent (these maps are the time-one maps of topological flows), 
see Section~\ref{ss.consequences-mme} for the statements.

The second application discusses the physical measures, i.e. invariant measures having a large basin (see Section~\ref{ss.physical} for the precise definition).

\begin{ltheorem}\label{thm.phys.meas.close.anosov}
If $(\phi_t)$ is a $C^r$ transitive Anosov flow ($2\leq r\leq \infty$) on a compact manifold $M$, then there is a $C^1$ open set $\mathcal{U}\subset\Dif^r(M)$ whose $C^r$ closure contains $\phi_1$, such that any $f\in \mathcal{U}$ admits a unique physical measure. Its basin has full volume in $M$ and its center Lyapunov exponent is negative.
\end{ltheorem}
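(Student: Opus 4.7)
The plan is to take $\mathcal{U}$ to be a small $C^1$-neighborhood of a carefully chosen $C^r$ perturbation $f_0$ of $\phi_1$. Inside a $C^1$-open set $\mathcal{V}\ni\phi_1$, every $f$ is a $C^r$ discretized Anosov flow, dynamically coherent, quasi-isometric in the center, with minimal $\cW^s,\cW^u$: these are all $C^1$-open properties in a neighborhood of the time-one map of a transitive Anosov flow. Each such $f$ admits ergodic Gibbs u-states by Pesin--Sinai, and by Bonatti--Viana the union of their basins has full Lebesgue measure. The theorem therefore reduces to showing that, for $f\in\mathcal{U}$, there is a unique ergodic Gibbs u-state and that it has negative center Lyapunov exponent.

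The core step is to exclude Gibbs u-states with $\lambda^c=0$. Let $\mu$ be such a measure: Ledrappier's entropy formula along $\cW^u$ combined with the Ruelle inequality yields $h(f,\mu)=h^u(f,\mu)$, so Theorem~\ref{t.inv.princ} provides u-invariance of the center disintegrations $\mu^c_x$. Combining this with the $\cW^u$-absolute continuity of $\mu$ produces a local $cs\times u$-product structure for $\mu$, and the argument behind Theorem~\ref{thm.continuous.disint}, together with generic accessibility (open-dense within $\mathcal{V}$ by Dolgopyat--Wilkinson), yields a continuous, $f$-, $s$- and $u$-invariant family of local center measures on $\supp(\mu)$. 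As in the proof of Theorem~\ref{thm.MME}, this rigid structure realizes $f$ as the time-one map of a continuous central flow; I would pick $f_0$ (via a local shear near a hyperbolic periodic orbit, breaking the commensurability of the center return times) so that $f_0$ is not such a time-one map, a $C^1$-open condition.

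To exclude $\lambda^c(\mu)>0$: the center exponent is upper semi-continuous on the weak-$*$-compact set of Gibbs u-states, and all Gibbs u-states of $\phi_1$ have $\lambda^c=0$; adding a small uniform contraction of the center to the initial perturbation makes $\lambda^c<0$ for every Gibbs u-state of $f_0$, and this strict negativity is $C^1$-stable throughout $\mathcal{U}$. Then every ergodic Gibbs u-state of $f\in\mathcal{U}$ is a hyperbolic SRB measure, hence physical. Uniqueness follows from $\cW^u$-minimality and absolute continuity of the $\cW^u$-disintegration: the support of any ergodic Gibbs u-state is $\cW^u$-saturated, hence coincides with $M$, so two distinct such measures would have disjoint full-Lebesgue basins, absurd. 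The full-volume basin conclusion then follows from the Bonatti--Viana statement.

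The most delicate step is the rigidity argument excluding $\lambda^c=0$: one must convert the one-sided u-invariance of $\mu^c$ provided by Theorem~\ref{t.inv.princ} into a bilateral, continuous, $su$-invariant family of local center measures, so that Theorem~\ref{thm.continuous.disint} can be invoked to force the time-one-flow structure. Bridging this gap requires extracting the $cs\times u$-product structure from the Gibbs u-state hypothesis and then globalizing the center disintegration via minimality of $\cW^s,\cW^u$ and accessibility, paralleling the techniques developed for Theorem~\ref{thm.MME}.
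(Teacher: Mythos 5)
Your overall strategy matches the paper's — reduce to showing every Gibbs u-state has strictly negative center exponent, obtain uniqueness from minimality of $\cW^u$, and deduce the full-volume basin — but several load-bearing steps are either false or unjustified.

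First, the claim that minimality of $\cW^u$ is a $C^1$-open property ``in a neighborhood of the time-one map of a transitive Anosov flow'' is false. If $(\phi_t)$ is a suspension of an Anosov diffeomorphism, every strong-unstable leaf of $\phi_1$ (and of every $C^1$-nearby diffeomorphism) stays inside a fiber of the suspension and is far from dense. This is precisely why the paper introduces Lemma~\ref{l.perturb.twisting}: one must perturb $\phi_1$ to a specific $f_0$, using a Bonatti--D\'{\i}az type mechanism near a hyperbolic periodic point, in order to obtain \emph{robust dynamical} minimality of $\cW^u$, together with accessibility. You cannot get this for free from transitivity of the flow.

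Second, the exclusion of $\lambda^c>0$ by ``adding a small uniform contraction of the center'' cannot be executed. For a discretized Anosov flow, the center foliation is one-dimensional and $f$ maps each center leaf to itself; on every compact center leaf (a circle corresponding to a periodic flow orbit) the restriction of $f$ is a circle diffeomorphism, which cannot have derivative uniformly smaller than one. A uniform center contraction would also contradict the quasi-isometry of the center that is built into the hypotheses. Upper semicontinuity of $\lambda^c$ over Gibbs u-states only gives $\lambda^c\leq\varepsilon$ near $\phi_1$, not $\lambda^c\leq 0$. The correct tool is Proposition~\ref{p.cs-abs-positive}: absolute continuity of the $cs$-foliation forces $\lambda^c\leq 0$ for every ergodic Gibbs u-state. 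That hypothesis is arranged by having $f_0$ preserve the original flow's center-stable foliation, which is the first item in Lemma~\ref{l.perturb.twisting}.

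Third, your exclusion of $\lambda^c=0$ needs the local $cs\times u$-product structure, which does not follow merely from the Gibbs u-state property plus u-invariance of $\mu^c$; you need absolute continuity of the $cs$-foliation (so that $\mu^u_x$ is $cs$-quasi-invariant, as in Proposition~\ref{p.cs.prod.struct}). Again, this holds because $f_0$ preserves the flow's $cs$-foliation, but it is not available for an arbitrary $f$ close to $\phi_1$. Relatedly, the claim that ``$f_0$ is not the time-one map of a continuous flow'' is a $C^1$-open condition is left unjustified; the paper instead arranges $f_0$ to have a compact invariant center leaf on which it restricts to a Morse--Smale circle diffeomorphism, a manifestly $C^1$-open property that directly obstructs the time-one-of-a-flow structure produced by Proposition~\ref{p.su.1d.implies.flow}. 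In fact, once one knows $\lambda^c(\mu)<0$ for the unique Gibbs u-state of $f_0$, the passage to a $C^1$-neighborhood is handled by upper semicontinuity of $\lambda^c$ together with robustness of dynamical minimality, so there is no need to run the invariance-principle argument for every $f\in\mathcal{U}$ as you propose.

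In short: you have the right skeleton, but the $C^1$-openness of $\cW^u$-minimality, the uniform center contraction, and the robustness of the ``not a time-one map'' condition are genuine gaps; the paper fills all three with the explicit construction of $f_0$ in Lemma~\ref{l.perturb.twisting} and the absolute-continuity argument of Proposition~\ref{p.cs-abs-positive}.
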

Compare with~\cite{ViY13} about partially hyperbolic diffeomorphisms whose center foliation is a circle fiber bundle and with~\cite{Dol04b} about perturbations of the time-one maps of $C^\infty$ geodesic flows. For more results on physical measures in our context see Section~\ref{ss.physical}.
%
\medskip

In the following, we consider a $C^1$ diffeomorphism $f$ on a closed manifold $M$,
which is \emph{partially hyperbolic}:
there exists an invariant splitting
\begin{equation*}
TM=E^s\oplus E^c\oplus E^u,
\end{equation*}
a Riemannian metric $\|\cdot\|$,
continuous functions $0<\nu<\gamma<{\hat\gamma}^{-1}<{\hat\nu}^{-1}$
with $\nu$, $\hat{\nu}<1$
such that,
for any unit vectors $v^s\in E^s(x)$, $v^c\in E^c(x)$, $v^u\in E^u(x)$,
$$\|Df(x)v^s \| < \nu(x) < \gamma(x) < \|Df(x)v^c \|  < {\hat{\gamma}(x)}^{-1}  < {\hat{\nu}(x)}^{-1} < \|Df(x)v^u\|.$$
All three sub-bundles $E^s$, $E^c$, $E^u$ are assumed to have positive dimensions.

We are aimed to describe invariant measures $\mu$ along the center direction, when the center Lyapunov exponents vanish.

\subsection{Invariant foliations and holonomies}
The stable and unstable bundles $E^s$ and $E^u$ are uniquely integrable and their integral manifolds form two transverse continuous foliations $\cW^s$ and $\cW^u$, whose leaves are immersed sub-manifolds with the same class of differentiability as $f$. These foliations are referred as the \emph{strong-stable} and \emph{strong-unstable} foliations. They are invariant under $f$; this means:
$$
f(\cW^s(x))= \cW^s(f(x)) \quand f(\cW^u(x))= \cW^u(f(x)),
$$
where $\cW^s(x)$ and $\cW^u(x)$ denote the leaves containing $x\in M$.
One says that $f$ is \emph{accessible} if every $x,y\in M$
can be connected by a path which is the union of finitely many $C^1$ paths tangent to $\cW^s$ or $\cW^u$ leaves.

We say that $f$ is \emph{dynamically coherent} if there also exist invariant foliations $\cW^{cs}$ and $\cW^{cu}$ tangent to the bundles $E^s\oplus E^c$ and $E^c \oplus E^u$ respectively: we call these foliations \emph{center-stable} and \emph{center-unstable}. The intersection of these foliations defines a \emph{center foliation} $\cW^c$:
for any $x\in M$, $\cW^c(x)$ is the connected component of $\cW^{cs}(x)\cap \cW^{cu}(x)$ which contains $x$. Observe that the center and unstable foliations sub-foliate the center-unstable manifolds.\!\!
\smallskip

The Riemannian metric induces a distance $d^c$ along the leaves of $\cW^c$.
\begin{definition}\label{d.quasi-isometric}
The partially hyperbolic diffeomorphism $f$ is \emph{quasi-isome\-tric in the center} if it is dynamically coherent and there exist $K_0\geq 1$, $c_0>0$ such that for every $x,y\in M$ satisfying $\cW^c(x)=\cW^c(y)$ and every $n\in \integer$,
$$
K_0^{-1}d^c(x,y)-c_0 \leq  d^c(f^n(x),f^n(y))\leq K_0 d^c(x,y)+c_0.
$$
\end{definition}
\noindent
This holds for instance when the center leaves are compact and form a fiber bundle.
There is an important class of partially hyperbolic diffeomorphisms, called \emph{discretized Anosov flows} \cite{BFFP, santiago} which are quasi-isometric in the center, but have non compact one-dimensional center leaves; for these systems, each center-leaf is individually fixed, i.e: $f(x)\in \cW^c(x)$ for every $x\in M$. Perturbations of the time-one map of Anosov flows are of this kind.

Given an arc $\gamma$ that connects two points $x,y$ inside a leaf of $\cW^u$,
one defines the unstable holonomy between neighborhoods of $x$ and $y$ inside $\cW^c(x)$ and $\cW^c(y)$.
The holonomy does not extend in general to the whole center leaves; indeed
for $z\in M$, the manifolds $\cW^c(z)$ and $\cW^u(z)$ may have several intersections.
However when $f$ is quasi-isometric in the center, this does not hold for a large set of points $z$
and there exist global holonomies. This is stated in the next theorem which it is interesting on its own.

\begin{ltheorem}\label{thm.global-holonomy}
Let $f$ be a partially hyperbolic, quasi-isometric in the center, $C^1$ diffeomorphism
and $\mu$ be an $f$-invariant measure.
Then there exists a full measure set $X\subset M$ such that for any $x,y\in X$ with $\cW^{cu}(x)=\cW^{cu}(y)$ the following holds:

For any $z\in \cW^c(x)$, the leaves $\cW^u(z)$, $\cW^c(y)$ intersect at a unique point, denoted by $\fh^u_{x,y}(z)$.
The map $\fh^u_{x,y}\colon \cW^c(x)\to \cW^c(y)$ is a homeomorphism.
\end{ltheorem}
The map $\fh^u_{x,y}$ is called \emph{unstable holonomy} between $\cW^c(x)$ and $\cW^c(y)$.

\subsection{Invariant measures under holonomies}\label{ss.invariant-measures}
If $\mu$ is a probability measure on $M$,
its Rokhlin disintegration induces a Radon measure $\mu^c_x$ along the leaf $\cW^c(x)$
of $\mu$-almost every point $x$, which is well defined up to a factor: when $\cW^c(x)=\cW^c(y)$,
there exists $K_{x,y}>0$ such that $\mu^c_x=K_{x,y}.\mu^c_y$.

We say that the center disintegration $\{\mu^c_x\}_{x\in M}$ of $\mu$ is \emph{invariant under unstable holonomies} (or \emph{u-invariant}) if for $\mu$-almost every $x,y$ satisfying $\cW^{cu}(x)=\cW^{cu}(y)$
there exists $K_{x,y}>0$ such that $(\fh^u_{x,y})_*(\mu^c_x)=K_{x,y}.\mu^c_y$,
where the holonomy map $\fh^u_{x,y}$ is uniquely defined by Theorem~\ref{thm.global-holonomy}.

The entropy of an $f$-invariant probability measure is denoted by $h(f,\mu)$.
The Ledrappier-Young entropy along $E^u$ is called \emph{unstable entropy}, denoted $h^u(f,\mu)$. One denotes analogously by $h^s(f,\mu)$ the \emph{stable entropy} along $E^s$.

Now we can state our main theorem.
\begin{maintheorem}
Let $f$ be a partially hyperbolic, quasi-isometric in the center, $C^1$ diffeomorphism
and let $\mu$ be an ergodic measure. If $h(f,\mu)=h^u(f,\mu)$, then the center disintegration $\{\mu^c_x\}_{x\in M}$ is u-invariant. 
\end{maintheorem}
By \cite{LY85b} and \cite{brown_21}, when $f$ has more regularity\footnote{We believe that this also holds for $C^1$ diffeomorphisms,
compare for instance with~\cite{Guo-Liao-Sun-Yang}.} we have the following result.
\begin{lcorollary}\label{cor.zero.exp}
Let $f$ be a partially hyperbolic, quasi-isometric in the center, $C^r$ diffeomorphism, $r>1$,
and let $\mu$ be an ergodic measure. If all the center Lyapunov exponents of $\mu$ are non-positive, then the center disintegration $\{\mu^c_x\}_{x\in M}$ is u-invariant. 
\end{lcorollary}

This kind of result is known as an ``Invariance Principle''. Ledrappier~\cite{Le86} proved a version for
the projective action of linear cocycles and invariant measures whose Lyapunov exponents coincide. This has been generalized by Avila and Viana~\cite{Extremal} for smooth cocycles. Their result applies to partially hyperbolic diffeomorphisms whose center leaves are compact and form a fiber bundle: in this setting, ergodic measures whose center Lyapunov exponents
are all non-positive have u-invariant center disintegration.
More recently Tahzibi and Yang~\cite{AliYang} have proved a version of the Invariance Principle whose statement involves the entropy:
for partially hyperbolic diffeomorphisms which are skew-products over an Anosov diffeomorphism,
an ergodic measure is u-invariant if and only if its unstable entropy coincides with the entropy of its projection in the base. This implies\footnote{In this setting where center leaves are compact, the statement in~\cite{AliYang} is an equivalence and (though it assumes $C^2$ regularity) is slightly stronger than our Main Theorem, since it involves the entropy of the projection instead of the full entropy $h(f,\mu)$.}
our Main Theorem when the center leaves are compact and form a circle fiber bundle.

The main novelty of our result is that we do not need any kind of compactess or fiber bundle structure of the center manifolds: we only need the quasi-isometric property in the center. This allows us to extend many results to more general partially hyperbolic systems (see the following sections to see some of them). At last, let us mention~\cite{ledrappier-xie} which proves an Invariance Principle for general measures,
beyond the partially hyperbolic setting, but assuming that the center Lyapunov exponents do not vanish
(the measure is non-uniformly hyperbolic): in this setting if the entropy of the measure coincides with the Ledrappier-Young entropy along a Pesin strong unstable lamination, then the unstable disintegration and the strong unstable disintegration of the measure coincide.

\begin{remark}
Recently~\cite{TahzibiZhang} 
constructed a volume-preserving derived from Anosov diffeomorphism, with absolutely continuous center unstable foliation, such that the volume has zero center Lyapunov exponents and a center disintegration which is not su-invariant. This contradicts the conclusion of Proposition~\ref{p.cont.dis} below and
shows that some quasi-isometric condition along the center is necessary for proving the u-invariance.
\end{remark}

The Invariance Principle is mainly used to establish the su-invariance of the center disintegration of a measure when the center Lyapunov exponents are zero. Note that the center disintegration is apriori only defined almost everywhere, but
in the next section we discuss the existence of a continuous extension.


\subsection{Continuous families of center disintegrations}
For measures who\-se center Lyapunov exponents vanish and having a local product structure,
Avila-Viana's Invariance Principle provides a continuous extension of the center disintegration over the support of the measure.
In our setting, we get a similar statement; however since the holonomies along invariant foliations are in general only defined locally,
we have to localize the center measures.

We first define the \emph{local unstable holonomies}:
there exists $\delta_0,\varepsilon_0>0$ such that for any $x,y\in M$ with $d(x,y)<\varepsilon_0$,
the plaques $B^{u}_{\delta_0}(x)$ and $B^{cs}_{\delta_0}(y)$ intersect at a unique point denoted by $\fh^u_y(x)$,
where $B^*_\delta(x)$ denotes the $\delta$-ball centered at $x$ inside the leaf $\cW^*(x)$, for $*\in\{s,c,u,cs,cu\}$.

\begin{definition}\label{d.local-center-measure}
Given $X\subset M$ measurable and $\delta>0$, a \emph{family of local center measures}, $\{\nu^c_x\}_{x\in X}$,
is a set of finite measures $\nu^c_x$ supported on $B^c_\delta(x)$
for each $x\in X$. 
The family $\{\nu^c_x\}_{x\in X}$
\emph{extends the center disintegration} $\{\mu^c_x\}$ of a measure $\mu$
if $\mu(X)=1$ and if there is $\varepsilon>0$ such that
for $\mu$-almost every $x\in X$ there exists $K_x>0$ satisfying
$\mu^c_x|_{B^c_{\varepsilon}(x)}=K_x.\nu^c_x|_{B^c_{\varepsilon}(x)}$.

It is continuous if $x\mapsto \nu_x^c$ is continuous for the weak-$\star$ topology.
\end{definition}
\medskip

Let $\cN^{cs\times u}_{\varepsilon}(z)$ be the product neighborhood of $z$ which is the image of
$B^{cs}_\varepsilon(z)\times B^{u}_\varepsilon(z)$ under the homeomorphism $(x,y)\mapsto \fh^u_y(x)$.
\begin{definition}\label{def.mes.prod.st}
A probability measure $\mu$ has \emph{local cs$\times$u-product structure} if there exists $\varepsilon>0$ such that,
for any $x\in \supp(\mu)$, the measure $\nu:=\mu|_{\cN^{cs\times u}_{\varepsilon}(z)}$ is equivalent to a product measure $\nu^{cs}\times \nu^{u}$
with respect to the product structure on $\cN^{cs\times u}_{\varepsilon}(z)$. See also Section~\ref{s.meas.prod.structure}.
\end{definition}
As we will see, local product structures are satisfied by natural classes of measures (equilibrium measures~\cite{buzzi-fisher-tahzibi,climenhaga-pesin-zelerowicz} and some u-Gibbs measure).

The following theorem can be applied to $C^r$ diffeomorphisms ($r>1$) and measures having vanishing center Lyapunov exponents and local cs$\times$u-product structure (as already mentioned before, vanishing exponents imply $h^s(f,\mu)=h^u(f,\mu)=h(f,\mu)$, see~\cite{LY85b,brown_21}). A more precise version will be stated in Section~\ref{ss.continuous-family}.

\begin{ltheorem}\label{thm.continuous.disint}
Let $f$ be a partially hyperbolic, quasi-isometric in the center, $C^1$ diffeomorphism
and let $\mu$ be an ergodic measure.

If $h^s(f,\mu)=h^u(f,\mu)=h(f,\mu)$ and if $\mu$ has local cs$\times$u-product structure, then there exists a
family of local center measures $\{\nu^c_x\}_{x\in \supp(\mu)}$ on the support of $\mu$
which is continuous and extends the center disintegration of $\mu$.
\end{ltheorem}

Let us mention that for conservative perturbations of the time-one maps of Anosov flows, Avila, Viana and Wilkinson~\cite{AVW11} constructed an artificial circle fiber bundle over $f$: although center leaves of $f$ are non-compact, this allowed them to apply \cite{Extremal} (or its version \cite{ASV13} for cocycles over conservative partially hyperbolic maps) and to establish the su-invariance of the center disintegration of the volume when its center Lyapunov  exponents vanish. When one considers measures that are not the volume, we could not work with such 
an artificial fiber bundle and we have to handle the disintegrations on the genuine non-compact center leaves.

\subsection{Consequence (1): Measures of maximal entropy (m.m.e.).}\label{ss.consequences-mme}

The variational principle asserts that the topological entropy of $f$ is equal to the supremum of the entropies of its invariant probabilities.
When $f$ is partially hyperbolic with one-dimensional center, there exists a probability measure of maximal entropy, i.e. which realizes the supremum~\cite{liao-viana-yang}. The case where the center foliation is a fibration with compact leaves has been studied 
in~\cite{hertz-hertz-tahzibi-ures}.

The case of perturbations of time-one maps of a transitive Anosov flows has been considered by 
Buzzi, Fisher and Tahzibi~\cite{buzzi-fisher-tahzibi}. They showed a dichotomy for diffeomorphisms inside some open sets arbitrarily $C^1$ close to the time-one maps of transitive Anosov flows: either all m.m.e. have vanishing center Lyapunov exponent or there exist exactly two ergodic m.m.e. one with positive and one with negative center Lyapunov exponent. Theorem~\ref{thm.MME.An.Fl} above improves this result.

A natural setting for this result are the already mentioned \emph{discretized Anosov flows}, 
i.e. partially hyperbolic diffeomorphisms (1) which are dynamically coherent, (2) whose center foliation is one-dimensional, (3) which act like a flow in the center:
there exists $L>0$ such that $f(x)\in \cW^c(x)$ and $d^c(x,f(x))<L$ for each $x\in M$.

Margulis has constructed~\cite{Mg70} measures of maximal entropy for the geodesic flow of manifolds with negative curvature: they are obtained from a family of measures carried by the unstable leaves, known as a Margulis system of measures. In \cite{buzzi-fisher-tahzibi} the authors extend this construction for $C^2$ discretized Anosov flows whose strong-stable and strong-unstable foliations are minimal:
if $\mu$ is a m.m.e. with non-positive center Lyapunov exponent, then its disintegration $\mu^u$ along the leaves of $\cW^u$ coincide with a system of Margulis measures; these measures are quasi invariant by $cs$-holonomies, and as a consequence $\mu$ has a local cs$\times$u-product structure.

We refine this result giving more information on the measure when the center Lyapunov exponent vanishes; this answers some of the open questions left in \cite[Questions 2 and 3]{buzzi-fisher-tahzibi}.
\begin{ltheorem}\label{thm.MME}
Let $f$ be a $C^2$ discretized Anosov flow such that $\cW^s,\cW^u$ are minimal,
and $\mu$ be an ergodic m.m.e. satisfying $h_{top}(f)=h^u(f,\mu)=h^s(f,\mu)$.
Then the center disintegrated measures $\mu^c_x$ do not contains atoms.

Moreover, if $f$ is accessible, then: (1) the m.m.e. is unique, (2) the disintegrated measures $\mu^c_x$ are equivalent to the Lebesgue measure,
(3) $f$ is the time-one map of a topological Anosov flow (whose orbits coincide with the $\cW^c$-leaves
and which are $C^n$, $n\in \natural$, if $f\in C^n$ along its orbits), (4) the center Lyapunov exponent of any invariant measure is zero.
\end{ltheorem}

The next statement shows the continuity of the m.m.e.
It is analogous to~\cite[Theorem B]{AliYang} (when the center leaves are compact).

\begin{lcorollary}\label{c.continuity.mme}
Let $f$ be $C^2$ discretized Anosov flow, such that $\cW^s,\cW^u$ are minimal,
and $(f_n)$ be $C^1$ diffeomorphisms converging to $f$ in $\Dif^1(M)$,
with ergodic measures $\mu_n$ such that $h(f_n,\mu_n)\to h_{top}(f)$
and $\lambda^c(\mu_n)\geq 0$.

Then, either $\lambda^c(\mu_n)\to 0$ and for all m.m.e. of $f$ the center exponent vanishes,
or $(\mu_n)$ converges to an ergodic m.m.e. $\mu$ satisfying $\lambda^c(\mu)> 0$.

In particular if $f$ has an ergodic m.m.e. which is hyperbolic
(i.e. its center Lyapunov exponent does not vanish), then there exists $\varepsilon>0$ such that any ergodic measure with entropy larger than $h_{top}(f)-\varepsilon$ is hyperbolic.
\end{lcorollary}

There are some interesting questions that emerge from this result.
\begin{question}
Let $\phi_1$ be the time-one map of a transitive Anosov flow which is not a suspension.
Does any diffeomorphism $C^1$-close to $\phi_1$ either have exactly two ergodic m.m.e. (each being hyperbolic)
or coincide with the time-one map of a topological Anosov flow? 
\end{question}

\begin{question}
Is this dichotomy also true for discretized Anosov flow?
\end{question}
We believe the first question is true and the second one is true in some $3$ manifolds, this is an ongoing project with Buzzi and Tahzibi.

\begin{question}
Is the flow in Theorem~\ref{thm.MME} smooth? 
\end{question}
The difficult part is to recover some smoothness on the direction transverse to the flow. In \cite{AVW11} this is done using that the map is volume preserving. We believe there should exist examples in our setting where $f$ is the time-one map of a topological flow that is not smooth.

\begin{question}
In the zero center exponent case (Theorem~\ref{thm.MME}), is $f$ Bernoulli?
\end{question}

\medskip

\subsection{Consequence (2):  Physical measures}\label{ss.physical}
An invariant measure $\mu$ is called \emph{physical} if its basin 
$$
B(\mu):=\left\{x\in M;\lim_{n\to +\infty} \frac{1}{n}\sum_{i=0}^{n-1}\int \varphi (f^i(x))=\int \varphi\mu,\forall\,\varphi\in C^0(M,\real)\right\}
$$
has positive Lebesgue measure.

Sinai~\cite{Si72}, Ruelle~\cite{Ru76b} and Bowen~\cite{Bow75a} have proved that $C^{r}$ uniformly hyperbolic diffeomorphisms (with $r>1$) have finitely many physical measures that describe the statistical behavior of Lebesgue almost every point. In this case the physical measures coincide with the SRB measures, i.e. with measures whose disintegrations along their unstable manifolds are absolutely continuous with respect to the Lebesgue measure.

Theorem~\ref{thm.phys.meas.close.anosov} above provides physical measures for some perturbations of the time-one maps of transitive Anosov flows.


For partially hyperbolic diffeomorphisms, measures having absolutely continuous disintegrations along the leaves of the strong unstable foliation
have been introduced and studied in~\cite{pesin-sinai} and are called \emph{u-Gibbs measures}: they are natural candidates to be physical measures~\cite{CYZ,HYY}. Assuming a weak contraction or expansion in the center, one can conclude that these systems admit physical measures~\cite{ABV,BV}, but there also exist transitive examples with no physical measures~\cite{CYZ}.

This question is better understood under a regularity condition on the center-stable foliation:
we say that $\cW^{cs}$ is \emph{absolutely continuous} if zero Lebesgue measure sets are preserved by holonomies along center-stable leaves between strong-unstable transversals. In this case, the u-Gibbs measure have a local cs$\times$u-product structure.

Viana-Yang have proved~\cite{ViY13} that for accessible partially hyperbolic diffeomorphisms whose center foliation
form a fiber bundle with circle center leaves and whose center-stable foliation is absolutely continuous, then every ergodic u-Gibbs measure is a physical; moreover there exist at most finitely many ones.
The next result extends some results of \cite{ViY13} to our setting and is used for proving Theorem~\ref{thm.phys.meas.close.anosov}.

\begin{ltheorem}\label{thm.phys.meas}
Let $f$ be a $C^r$ ($r>1$) accessible discretized Anosov flow. If $\cW^u$ is minimal and
$\cW^{cs}$ is absolutely continuous, any u-Gibbs measure is a physical measure whose basin has full volume.

Moreover it is unique and its center Lyapunov exponent is non-positive.
\end{ltheorem}

The $cs$-absolute continuity is essential in the proof to have a local $cs\times u$ product structure of the u-Gibbs measure and to apply Theorem~\ref{thm.continuous.disint}. 

\subsection{More about systems which are quasi-isometric in the center}
The quasi-isometric condition in the center is equivalent to require the existence of $0<r_1\leq r_2 \leq r_3$ such that for every $x\in M$ and $n\geq 1$ 
$$
B^c_{r_1}(f^n(x))\subset f^n(B^c_{r_2}(x))\subset B^c_{r_3}(f^n(x)).
$$

\begin{remark}
Only the first inclusion (called \emph{non-shrinking center condition}) is used in the proof of the existence of global holonomies (Theorem~\ref{thm.global-holonomy}), see Section~\ref{s.global.prod.structure}.
\end{remark}

As we mentioned, natural examples of such systems are (1) partially hyperbolic diffeomorphisms whose center foliation
is a fibration with compact leaves, (2) $C^1$ perturbations of the time-one maps of Anosov flows.

We remark that the quasi-isometric condition is also invariant under linear cocycle extension.
For instance, if $f\in\Dif^1(M)$ is partially hyperbolic, then any fiber-bunched $C^1$ map $A:M\to \GL$ induces a projective cocycle $F:M\times \proj\to M\times \proj$:
this is a partially hyperbolic diffeomorphism with center manifold $\cW^c_F=\cW^c_f\times \proj$.
Assuming that $f$ acts quasi-isometrically in the center, $F$ also acts quasi-isometrically in the center.

Many results based on the Invariance Principle discuss the generic positiveness of the upper Lyapunov exponents in the space of H\"older continuous fiber-bunched linear cocycles over some fixed dynamics with some hyperbolicity property, see for example \cite{Almost}, \cite{ASV13}, \cite{Pol16}.
\begin{question}
Do generic H\"older continuous fiber-bunched linear cocycles over partially hyperbolic quasi-isometric in the center $C^1$ diffeomorphisms have positive Lyapunov exponents?
\end{question}
The Main Theorem may be useful to answer this question when the measure has some local $cs\times u$ product structure.

\subsection{Strategy of the proof and organization of the paper}
Consider some cu-square $R$ with c$\times$u-product structure and the measure $\mu^{cu}_R$ on $R$ induced by the center-unstable disintegration of a measure $\mu$. The u-invariance of the family $\{\mu^c_x\}$ is equivalent for $\mu^{cu}_R$ to be a product measure $\mu^u\times \mu^c$ with respect to the product structure of $R$.
We want to prove that for any partition $\cP$ of any cu-square $R$ by vertical strips (i.e. a partition on the u-direction c-saturated inside $R$), the measure $\mu_R^{cu}(P)$ of every strip $P\in \cP$ is equal to the measure of the u-disintegration $\mu^u_x(P)$ for every $P\in \cP$.

This is done by comparing the entropy of $\cP$ and the entropy of the induced partitions on unstable leaves. We call this difference \emph{transverse entropy} of $\cP$, denoted by $\Gap(\cP)$ (see Section~\ref{s.trans.entropy}). By Jensen inequality, $\Gap(\cP)$ is non-negative; it vanishes exactly when $\mu^{cu}(P)=\mu^u_x(P)$ for every $P\in \cP$, $x\in R$.

Let us forget the stable direction for now and let us suppose that there exists some cu-square $R$ and a partition $\cP$ as before with positive transverse entropy. Our goal is to construct a sequence of partitions by vertical strips $\cP_n$ such that, for a large measure set of points $x$, the iterate $f^n(\cP_n(x))$ contains the original cu-square $R$, and the partition $f^n\cP_{n+1}$ refines $\cP$. This will imply $\Gap(\cP_n)\geq n\Gap(\cP)$. We also check that the sequence $(\cP_n)$ satisfies $\limsup_{n\to \infty}\frac{1}{n}\Gap(\cP_n)\leq h_\mu(f)-h^u_\mu(f)$. Consequently, if $h_\mu(f)=h^u_\mu(f)$ we get a contradiction.

Let us discuss the construction of the partitions $\cP_n$. As the center direction is not necessarily expanded it is generally difficult to find iterates such that $f^n(\cP_n(x))$ contains the cu-square $R$.
For that reason, we actually define $\cP_n$ in a set which is larger in the center and contains the original cu-square.
Choosing sets which are sufficiently large along the center allows to apply the quasi-isometric property and ensure the covering property for many iterates. Here appears a technical difficulty: we a priori have a good product structure only for sets which are small in the center direction. For that reason, we have to extract measurable subsets with full measure, which have a good product structure and a large center size, as stated in Theorem~\ref{thm.global-holonomy}: this is done in Section~\ref{s.global.prod.structure}.

Another problem is that some atoms of the partitions $\cP_n$ are very small in the u-direction, even after iterations:
the unstable expansion under $f^n$ may not be enough to ensure that the images of the elements of $\cP_n$ cover the square. To overpass this difficulty, we need to require a property of small boundary for the unstable partition $\cP$:
for a large set of points $x$, the $f^{n-1}$ iterate of $\cP_n(x)$ achieves some uniform size along the unstable direction:
this is done in Section~\ref{s.partition.small.bound}.

The precise construction of the partitions $\cP_n$ is done in Section~\ref{s.gaps.entropy}, where the Main Theorem is proved.
The proof of Theorem~\ref{thm.continuous.disint} is in Section~\ref{s.meas.prod.structure}, those of Theorems \ref{thm.MME.An.Fl} and \ref{thm.MME} are contained in Section~\ref{s.maximal.entropy} and Theorems~\ref{thm.phys.meas.close.anosov} and~\ref{thm.phys.meas} are proved in Section~\ref{s.phys.meas}.

\subsection{Notations and general definitions}
\subsubsection{Balls}\label{ss.ball}
As before we denote by $d^*$ the distance along the leaves of $\cW^*$, $*\in\{u,c,s,cs,cu\}$, 
that are induced by the Riemanniann metric.

Let us recall that for any $x\in M$ and $*\in\{u,c,s,cs,cu\}$, we denote by $B^*_\varepsilon(x)$ the $\varepsilon$-ball centered at $x$ inside $\cW^*(x)$.

For any $l,\delta>0$, we define
$$B^{cs}_{l,\delta}(x)=\cup\{B^c_l(y):\; y\in B^s_\delta(x)\},$$
$$B^{cu}_{l,\delta}(x)=\cup\{B^c_l(y):\; y\in B^u_\delta(x)\}.$$

It will be convenient to fix a scale $\varepsilon_0>0$
and to define the local plaques $\cW^*_{loc}(x):=B^*_{\varepsilon_0}(x)$. Choosing $\varepsilon_0$
small ensures that for any $x,y\in M$
the intersections $\cW^s_{loc}(x)\cap \cW^{cu}_{loc}(y)$, $\cW^{cs}_{loc}(x)\cap \cW^{u}_{loc}(y)$
contain at most one point.

\subsubsection{Disintegrations} Given a probability measure $\mu$,
its Rohklin disintegration along the leaves of the foliation $\cW^*$ is a family of measures denoted by $\{\mu^*_z\}$ (each measure is defined up to a factor and is finite on subsets of $\cW^*_z$ that are compact for the intrinsic topology).

When $R\subset M$ is a measurable subset with positive measure and $\{\cW^*_R\}$ is a measurable partition of $R$
induced by the foliation $\cW^*$, the measures $\mu^*_z$ can be normalized:
to almost every point $z$, one associates a probability measure $\mu^*_{R,z}$ on $\cW^*_R(z)$ such that:
\begin{itemize}
\item[--] $\mu^*_{R,z}$ is constant on each set $\cW^*_R(z)$,
\item[--] for each measurable set $A$, the map $z\mapsto \mu^*_{R,z}(A)$ is measurable,
\item[--] $\int_R \mu^*_{R,z}(A)d\mu=\mu(A)/\mu(R)$.
\end{itemize}

\subsubsection{Holonomies}
Let us consider some foliations $\cW^*$ and two transverse sections $S_1,S_2$.
A map $\fh^*\colon S_1\to S_2$ is an \emph{holonomy map along the leaves of $\cW^*$} if there exists
a continuous map $H:S_1\times [0,1]\to M$ such that for every $(x,t)\in S_1\times [0,1]$,
one has $H(x,t)\in \cW^*(x)$ and $H(x,1)=\fh^*(x)$.
Holonomies along $\cW^*(x)$ are also called $*$-holonomies.


\section{Global product structure}\label{s.global.prod.structure}
In this section we consider a dynamically coherent partially hyperbolic diffeomorphism that satisfies half of Definition~\ref{d.quasi-isometric}, it \emph{does not shrink the center}, i.e. there exist $K_0,c_0>0$ such that for every $x,y\in M$ in a same center leaf, and for every $n\geq0$, $$
K_0^{-1}d^c(x,y)-c_0 \leq  d^c(f^n(x),f^n(y)).
$$
We prove Theorem~\ref{thm.global-holonomy} and build sets with a \emph{fibered c$\times$u-product structure}
(see Section~\ref{ss.fibered-cu}).

\subsection{Existence of global c$\times$u-product structure}
Every $\cW^{cu}$ leaf is sub-foliated by the transverse foliations $\cW^c$ and $\cW^u$, hence has a local product structure.
In general this structure does not extend globally, see Figure~\ref{fig.non.product}.

We say that a set $X$ has a \emph{global c$\times$u-product structure} if for any $x,y\in X$ in a same $\cW^{cu}$ leaf,
the intersection $\cW^{c}(x)\cap \cW^{u}(y)$ contains exactly one point, which furthermore belongs to $X$.
The main result of this section implies Theorem~\ref{thm.global-holonomy}
and asserts that, for any invariant probability measure, there exists a measurable set with full measure which has a global c$\times$u-product structure.

\begin{figure}[h]
\centering
\includegraphics[scale= 0.5]{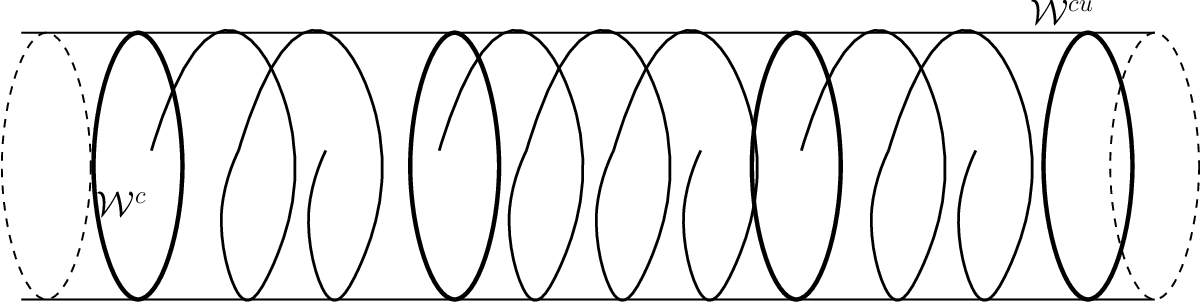}
\caption{A leaf $\cW^{cu}$ without global product structure.}
\label{fig.non.product}
\end{figure}

\begin{theorem}\label{t.prod.struct}
Let $f$ be a dynamically coherent partially hyperbolic diffeomorphism which does not shrink the center
and $\mu$ be an $f$-invariant probability.
Then there is a total measure set $X$, saturated by center leaves such that
for any $x\in X$, and any $y\in\cW^{cu}(x)$, the unstable leaf of $y$ intersects the 
center leaf of $x$ in a unique point.

In particular, for each $x\in X$ there exists a global continuous projection by u-holonomy
$\fh^u\colon \cW^{cu}(x)\to \cW^c(x)$. It induces for each $x,y\in X$ in a same cu-leaf
a homeomorphism $\fh^u_{x,y}\colon \cW^c(x)\to \cW^c(y)$.
\end{theorem}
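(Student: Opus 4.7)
The plan is to construct $X$ from Poincar\'e recurrence, then establish uniqueness of the intersection $\cW^u(y)\cap\cW^c(x)$ by backward iteration, and finally build the intersection itself via extension of local unstable holonomies along paths in $\cW^{cu}(x)$.

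\medskip

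\noindent\emph{Construction of $X$.} I take $R\subset M$ to be the $\mu$-full-measure set of points that are recurrent under $f^{-1}$ (Poincar\'e recurrence), and set $X$ to be its $\cW^c$-saturation. Then $X$ is center-saturated by construction and satisfies $\mu(X)=1$. Because each $x\in X$ shares its center leaf (and hence its cu-leaf) with some $x_0\in R$, I may reduce the main assertion to the case $x\in R$ throughout the argument.

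\medskip

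\noindent\emph{Uniqueness of the intersection.} Fix $x\in R$, let $y\in\cW^{cu}(x)$, and suppose for contradiction that $z_1\ne z_2$ both lie in $\cW^u(y)\cap\cW^c(x)$. Put $p_n:=f^{-n}(z_1)$ and $q_n:=f^{-n}(z_2)$. Uniform backward contraction of $E^u$ gives $d^u(p_n,q_n)\to 0$ exponentially, hence $d(p_n,q_n)\to 0$ in $M$. The non-shrinking center hypothesis keeps $d^c(f^{-n}(x),p_n)$ and $d^c(f^{-n}(x),q_n)$ bounded by $D:=K_0\max_i d^c(x,z_i)+K_0 c_0$, so the sequences $(p_n),(q_n)$ stay in a fixed compact neighborhood of $\{f^{-n}(x)\}$. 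Choose $n_k\to\infty$ with $f^{-n_k}(x)\to x$; extracting once more I may assume $p_{n_k}\to w$ and therefore also $q_{n_k}\to w$. For large $k$ the pair $(p_{n_k},q_{n_k})$ lies in a local product chart at $w$, with $q_{n_k}\in\cW^u_{loc}(p_{n_k})$. I then show that the global containment $q_{n_k}\in\cW^c(p_{n_k})$ promotes to the local one, $q_{n_k}\in\cW^c_{loc}(p_{n_k})$, for $k$ large enough; transversality $\cW^c_{loc}(p_{n_k})\cap\cW^u_{loc}(p_{n_k})=\{p_{n_k}\}$ forces $p_{n_k}=q_{n_k}$, which contradicts the injectivity of $f^{-n_k}$ and the assumption $z_1\ne z_2$.

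\medskip

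\noindent\emph{Existence and continuity.} Given $y\in\cW^{cu}(x)$, I connect $x$ to $y$ by a path $\gamma$ inside $\cW^{cu}(x)$, cover $\gamma$ by finitely many local charts of the sub-foliation $\cW^c\oplus \cW^u$ of $\cW^{cu}$, and extend the projection $\fh^u_{x,y}$ chart by chart, starting from $\fh^u(x)=x$, using the local unstable holonomy between consecutive $\cW^c$-plaques. The uniqueness step guarantees that the result is independent of the cover and the path $\gamma$, so the extension is well-defined and produces the desired point of $\cW^u(y)\cap\cW^c(x)$. Continuity of local holonomies yields continuity of the global map $\fh^u_{x,y}\colon\cW^c(x)\to\cW^c(y)$; reversing the roles of $x$ and $y$ gives the inverse and the homeomorphism property.

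\medskip

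\noindent\emph{Main obstacle.} The hard step is promoting the global inclusion $q_{n_k}\in\cW^c(p_{n_k})$ to the local one $q_{n_k}\in\cW^c_{loc}(p_{n_k})$: a priori two points of the same immersed center leaf can be arbitrarily close in $M$ while remaining at a definite intrinsic $\cW^c$-distance, so local product structure alone is not enough. The non-shrinking center condition is precisely designed to rule out such plaque-wandering under iteration, but converting it into a clean contradiction requires combining Poincar\'e recurrence, the exponential collapse of $\cW^u$-distances under $f^{-n}$, and a uniform geometric control of $\cW^c$-plaques across the compact manifold $M$.
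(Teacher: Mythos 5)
Your uniqueness argument is close in spirit to the paper's (both iterate backwards, use recurrence, and exploit $E^u$-contraction), but you leave the decisive step unproved, and in fact the recurrence-to-a-generic-point scheme cannot supply it. Nothing you invoke rules out the scenario $d(p_{n_k},q_{n_k})\to 0$ while $d^c(p_{n_k},q_{n_k})$ stays bounded away from zero: the non-shrinking hypothesis only yields the \emph{upper} bound $d^c(p_n,q_n)\le K_0(d^c(z_1,z_2)+c_0)$ on backward iterates, and Poincar\'e recurrence gives no control over the geometry of the limit $w$. That is exactly the plaque-wandering that the paper's Lemma~\ref{l.small.product.structure} is designed to eliminate: one selects a special $x_0\in\supp(\mu)$, a radius $r$ and a set $Z\subset\cW^{su}_{\delta,\delta}(x_0)$ such that each $B^c_{10L}(z)$, $z\in Z$, meets the su-section $\cW^{su}_{r,r}(x_0)$ in exactly one point, which forces $\operatorname{Card}(B^u_r(y)\cap B^c_{2L}(z_2))=1$ (Item~(iv)). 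The theorem's proof then recurs backwards not to a generic point but to such an $x_0$, so the double intersection lands inside a single $B^c_L(z)$, contradicting~(iv). The technical heart is Item~(iii) --- that $Z$ can be chosen with positive measure in the $\cW^c$-direction --- proved by replacing $x_0$ inductively whenever (iii) fails and using a volume bound on $B^c_{10L+\cdot}$ to force the induction to stop. Your proposal has no analogue of this selection argument, and the ``uniform geometric control of $\cW^c$-plaques'' you hope for does not exist: center leaves are non-compact, immersed, and may accumulate on themselves.

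The existence part is also incomplete. Your chart-by-chart continuation would need a uniform bound on the unstable excursion $d^u(\gamma(t),\fh^u(\gamma(t)))$; you cover $\gamma$ by charts, but the projected points live elsewhere and there is no reason a priori for them to remain in a precompact region of $\cW^u$. The paper's claim that $\cW^u\cW^c(x)=\cW^{cu}(x)$ is proved dynamically: one shows $\cW^u\cW^c(x)$ contains a \emph{uniform} $\delta$-ball around each of its points by pulling the ball back by $f^{-n}$, using $E^u$-contraction to push the base point into the transversality tube around $\cW^c(f^{-n}(x))$ and the non-shrinking hypothesis to keep the preimage inside that tube. Your existence argument never uses the non-shrinking condition, which is a warning sign that something is missing.
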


\subsection{su-sections with a product structure}
Between two unstable plaques that are close enough, there exists a well-defined center-stable holonomy
(the holonomy that minimizes the distance inside center-stable leaves).

\begin{proposition}\label{p.cs-u.product}
For any $x\in M$ and $r>0$, there are $\delta, \delta'>0$ such that
for any $y$ that is $\delta$-close to $x$ there exists a unique holonomy
$\fh^{cs}_{x,y}\colon B^u_r(x)\to \cW^u(y)$ defined by a continuous map $H:B^u_r(x)\times [0,1]\to M$ satisfying:
\begin{itemize}
\item[--] its image contains $y$,
\item[--] for each $z\in B^u_r(x)$, the distance $d^{cs}(z,H(z,t))$ is smaller than $\delta'$.
\end{itemize}
Moreover, for any $y,y'$ $\delta$-close to $x$, the sets $\fh^{cs}_{x,y}(B^u_r(x))$ and $\fh^{cs}_{x,y'}(B^u_r(x))$
are disjoint or equal.
\end{proposition}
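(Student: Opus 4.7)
The strategy is to construct $\fh^{cs}_{x,y}(z)$ pointwise as the unique intersection of a small center-stable plaque at $z$ with the unstable leaf $\cW^u(y)$, and then to extend this to a continuous holonomy over the whole plaque $B^u_r(x)$ by combining the uniform local product structure of the transverse continuous foliations $\cW^{cs}$ and $\cW^u$ with the continuity of the unstable lamination on the compact set $\overline{B^u_r(x)}$.

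First I would fix $\eta_0>0$, uniform over $M$, such that whenever $d(z,z')<\eta_0$ the plaques $B^{cs}_{\varepsilon_0}(z)$ and $B^{u}_{\varepsilon_0}(z')$ meet transversally at a unique point, lying within some uniform $\delta'_0>0$ of $z$ inside $\cW^{cs}(z)$; this is the standard local product structure for transverse continuous foliations tangent to complementary continuous distributions $E^{cs}\oplus E^u$. Next, using the continuity of $\cW^u$ and the compactness of $\overline{B^u_r(x)}$, I would choose $\delta=\delta(x,r)>0$ small enough that whenever $d(x,y)<\delta$ the leaf $\cW^u(y)$ passes $\eta_0$-close to every $z\in B^u_r(x)$. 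By local product structure at each such $z$, the intersection $B^{cs}_{\delta'_0}(z)\cap \cW^u(y)$ then consists of a single point, which I define to be $\fh^{cs}_{x,y}(z)$. Continuity in $z$ follows from uniqueness of the intersection and continuity of the two foliations; the homotopy $H(z,t)$ is obtained by travelling at constant speed along the unique short cs-arc from $z$ to $\fh^{cs}_{x,y}(z)$, which depends continuously on $z$. Setting $\delta'=\delta'_0$ gives the required control $d^{cs}(z,H(z,t))<\delta'$.

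Two further checks finish the argument. To see that $y$ lies in the image, apply the local product structure to $x$ and $y$: for $\delta$ small the point $z_0:=\cW^u_{loc}(x)\cap\cW^{cs}_{loc}(y)$ is well-defined and close to $x$, hence inside $B^u_r(x)$; since $z_0\in\cW^{cs}_{loc}(y)$ one has $\cW^{cs}_{loc}(z_0)=\cW^{cs}_{loc}(y)$, so uniqueness forces $\fh^{cs}_{x,y}(z_0)=y$. For the disjoint-or-equal dichotomy, $\fh^{cs}_{x,y}(B^u_r(x))\subset\cW^u(y)$: if $\cW^u(y)\neq \cW^u(y')$ the images are disjoint because the unstable leaves are; if $\cW^u(y)=\cW^u(y')$ then at every $z\in B^u_r(x)$ the sets $B^{cs}_{\delta'}(z)\cap \cW^u(y)$ and $B^{cs}_{\delta'}(z)\cap \cW^u(y')$ coincide by uniqueness, so $\fh^{cs}_{x,y}\equiv \fh^{cs}_{x,y'}$ and the images agree. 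The main obstacle is the uniformity of $\delta'$ over the (possibly large) plaque $B^u_r(x)$: one must avoid picking a far-away accidental intersection of $\cW^{cs}(z)$ with $\cW^u(y)$. This is handled by the standard compactness argument on $\overline{B^u_r(x)}$ together with the uniform choice of the local product scale $\eta_0$, $\delta'_0$, which forces the intersection within $B^{cs}_{\delta'_0}(z)$ to be the unique ``nearby'' one.
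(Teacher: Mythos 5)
Your overall plan (pointwise definition by transverse intersection, uniformity over $\overline{B^u_r(x)}$ by compactness, geodesic arcs for $H$, dichotomy on images by uniqueness of the intersection) parallels the paper's proof, and most of the steps are sound. There is, however, one genuine gap, and it is precisely the one you flag at the end without actually resolving it.

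You define $\fh^{cs}_{x,y}(z)$ as ``the unique intersection point of $B^{cs}_{\delta'_0}(z)$ with $\cW^u(y)$'' and assert that this intersection ``consists of a single point.'' That is false in general: $\cW^u(y)$ is an immersed leaf, unbounded and often dense in $M$, so for any fixed $\delta'_0>0$ the full leaf can return to meet the small plaque $B^{cs}_{\delta'_0}(z)$ arbitrarily many times. The local product structure with uniform scale $(\eta_0,\delta'_0)$ guarantees transversality and a unique intersection of $B^{cs}_{\delta'_0}(z)$ with a \emph{local plaque} $B^u_{\varepsilon_0}(w)$ of the unstable lamination, not with the entire leaf. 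Your final paragraph acknowledges the ``far-away accidental intersection'' obstacle, but the proposed fix (compactness of $\overline{B^u_r(x)}$ plus uniformity of $\eta_0,\delta'_0$) only controls the domain side; it does nothing to exclude distant excursions of $\cW^u(y)$ back into the tubular region. The same defect reappears in your disjoint-or-equal argument, which again quantifies over $B^{cs}_{\delta'}(z)\cap\cW^u(y)$ for the whole leaf.

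The paper closes this gap by building the tubular neighbourhood $V_{\delta'}(x)=\bigcup_{z\in B^u_r(x)}B^{cs}_{\delta'}(z)$ (so the plaques $B^{cs}_{\delta'}(z)$ are pairwise disjoint) and, crucially, by restricting the \emph{range} to a bounded unstable plaque: it chooses $r'>r$ so that $B^u_{r'}(y)$ meets every $B^{cs}_{\delta'}(z)$, and then shrinks $\delta'$ so that $B^u_{2r'}(y)\cap B^{cs}_{\delta'}(z)$ is a single point. The definition of $\fh^{cs}_{x,y}(z)$ is this intersection, and uniqueness of the holonomy $H$ is then deduced from the observation that any admissible $H'$ must land in $B^u_{2r'}(y)$. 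To repair your argument you would need the same move: replace ``$\cW^u(y)$'' by a bounded plaque $B^u_{R}(y)$ (with $R$ chosen by compactness of $\overline{B^u_r(x)}$ and continuity of the lamination), and then shrink $\delta'$ so that $B^u_{2R}(y)\cap B^{cs}_{\delta'}(z)$ is a singleton for every $z\in B^u_r(x)$.
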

\begin{proof}
Take $B^u_r(x)$ and for each $\delta'>0$ take $V_{\delta'}(x)=\cup_{y\in B^u_r(x)}B^{cs}_{\delta'}(y)$. By transversality of $cs$ and $u$ foliations, for $\delta'$ sufficiently small $V_\delta'(x)$ is a tubular neighborhood of $B^u_r(x)$. Hence $B^{cs}_{\delta'}(z)\cap B^{cs}_{\delta'}(y) \neq \emptyset$ for $z,y\in B^u_r(x)$ if and only if $y=z$. 
By continuity of the $u$ foliation and since $W^u(x)$ is homeomorphic to the space $\real^{d^u}$,
 there exists $\delta>0$ and $r'>r$ such that if $d(x,y)<\delta$ then $B^u_{r'}(y)$ intersects every $B^{cu}_{\delta'}(z)$, with $z\in B^u_r(x)$. Moreover (having chosen $\delta'>0$ small) we can assume that $B^u_{2r'}(y)\cap B^{cu}_{\delta'}(z)$
 contains only one point $\fh^{cs}_{x,y}(z)$.

One then defines a continuous map $H:B^u_r(x)\times [0,1]\to M$ satisfying $H(z,0)=z$ and $H(z,1)=\fh^{cs}_{x,y}(z)$,
by considering for each $z\in B^u_r(x)$ the geodesic inside $B^{cs}_{\delta'}(z)$ that connects $z$ and $\fh^{cs}_{x,y}(z)$
(it is parametrized by $[0,1]$ with constant speed). Note that any cs-holonomy
from $B^u_r(x)$ to $\cW^u(y)$,
defined by a map $H':B^u_r(x)\times [0,1]\to M$ satisfying $H'(z,t)\in B^{cs}_{\delta'}(z)$ for every $t\in [0,1]$
coincides with $\fh^{cs}_{x,y}(z)$, by uniqueness of the intersections $B^u_{2r'}(y)\cap B^{cu}_{\delta'}(z)$.

Let us now consider two points $y,y'$ that are $\delta$ close to $x$ and satisfying
$B^u_{r'}(y)\cap B^u_{r'}(y')\neq \emptyset$. For each $z\in B^u_r(x)$,
observe that $\fh^{cs}_{x,y}(z),\fh^{cs}_{x,y'}(z)\in B^u_{2r'}(p)\cap B^{cs}_{\delta'}(z)$.
This again implies that $\fh^{cs}_{x,y}(z)=\fh^{cs}_{x,y'}(z)$.
\end{proof}

The previous proposition justifies the following notation.
\bigskip

\paragraph{\bf Notation.}
For any $x\in M$, $r>0$, let us consider $\delta>0$ small and the holonomies
$\fh^{cs}_{x,y}$ as in the statement of Proposition~\ref{p.cs-u.product}.
Then we denote
$$\cW^{su}_{r,\delta}(x):=\bigcup_{y\in B^s_\delta(x)} \fh^{cs}_{x,y}(B^u_r(x)),$$
The set $\cW^{su}_{r,\delta}(x)$ has a \emph{cs$\times$u-product structure}: it is homeomorphic to the product $B^s_\delta(x)\times B^u_r(x)$ through the map $\Psi\colon (y,z)\mapsto \fh^{cs}_{x,y}(z)$.
\bigskip

\paragraph{\bf Definitions.}
To any points $p=\fh^{cs}_{x,y}(z)$ and $q=\fh^{cs}_{x,y'}(z')$ in $\cW^{su}_{r,\delta}(x)$,
one associates the \emph{local product} $[p,q]:=\fh^{cs}_{x,y}(z')$.
It belongs to $\cW^u(p)\cap \cW^{cs}(q)$.

\noindent
A subset $Z\subset \cW^{su}_{r,\delta}(x)$ has a \emph{cs$\times$u-product structure} if
$$\forall p,q\in Z, \quad [p,q]\in Z.$$
One sets $\cW^u_{Z}(p)=\{q\in Z: [p,q]=q\} \text{ and } \cW^{cs}_Z(p)=\{q\in Z: [q,p]=q\}.$

\subsection{Sets with a fibered c$\times$u-product structure}\label{ss.fibered-cu}
The next key lemma does not essentially depend on the dynamics.

\begin{lemma}\label{l.small.product.structure}
Under the setting of Theorem~\ref{t.prod.struct},
for any $0<L^-<L^+$, there exist $x_0\in \supp(\mu)$, $L\in (L^-,L^+)$, $r>0$ small such that for any $\delta>0$ small,
the set
$Z:=\{z\in W^{su}_{\delta,\delta}(x_0)\colon\; \operatorname{Card}(B^c_{10L}(z)\cap W^{su}_{r,r}(x_0))=1\}$
satisfies:
\begin{enumerate}[(i)]
\item $Z$ has a cs$\times$u-product structure.
\item $B^c_{4L}(z)$ and $B^c_{4L}(z')$ are disjoint for any $z\neq z'$ in $Z$.
\item $\mu(\cup_{z\in Z\cap U} B^c_\ell(z))>0$, for any $\ell>0$ and any neighborhood $U$ of $x_0$.
\item $\operatorname{Card}(B_r^u(y)\cap B^c_{2L}(z_2))=1$, for any $z_1\in Z$, $z_2\in \cW^u_{Z}(z_1)$, $y\in B^c_L(z_1)$;
this gives a  homeomorphism $\fh^u_{z_1,z_2}$ from $B^c_{L}(z_1)$ to a subset of $B^c_{L+1}(z_2)$ containing $B^c_{L-1}(z_2)$.
\item $\operatorname{Card}(B_r^s(y)\cap B^c_{2L}(z_2))=1$, for any $z_1\in Z$, $z_2\in \cW^s_{Z}(z_1)$, $y\in B^c_L(z_1)$; this gives a  homeomorphism $\fh^s_{z_1,z_2}$ from $B^c_{L}(z_1)$ to a subset of $B^c_{L+1}(z_2)$ containing $B^c_{L-1}(z_2)$.
\end{enumerate}
\end{lemma}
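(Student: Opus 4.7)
The plan is to choose parameters $x_0 \in \supp(\mu)$, $r > 0$, $L \in (L^-, L^+)$ and $\delta > 0$ (in this order) so that $W^{su}_{r,r}(x_0)$ serves as a long transversal to the center foliation near $x_0$; I will in fact force $Z = W^{su}_{\delta,\delta}(x_0)$. The key object is the first-return function
\[
\tau_r(z) := \inf\bigl\{ d^c(z,w) : w \in W^{su}_{r,r}(x_0) \cap \cW^c(z),\ w \neq z \bigr\},
\]
with $\inf \emptyset = +\infty$, and I look for parameters such that $\tau_r > 10L$ throughout $W^{su}_{\delta,\delta}(x_0)$.

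By Proposition~\ref{p.cs-u.product} and transversality of $\cW^c$ with $\cW^{su}$, the function $\tau_r$ is lower semi-continuous in the interior of the transversal. To arrange $\tau_r(x_0) > 10L^+$, note that as $r$ decreases the transversal shrinks to $\{x_0\}$, so the only obstruction would be that $\cW^c(x_0)$ repeatedly passes close to itself at small center-distance. I would exclude such pathologies by a Fubini argument over $L \in (L^-, L^+)$: the set of pairs $(z, L)$ for which $\tau_r(z)$ lies near $10L$ has controlled measure, so one can pick $L$ generically and $x_0 \in \supp(\mu)$ at a Lebesgue density point of $\{\tau_r > 10L\}$. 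Shrinking $\delta$ then propagates the inequality to the full sub-transversal $W^{su}_{\delta,\delta}(x_0)$ by the semi-continuity.

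With this setup, properties (i)--(iii) follow readily: (i) is inherited from the cs$\times$u-product structure of $W^{su}_{\delta,\delta}(x_0)$; (ii) holds because two overlapping center balls $B^c_{4L}(z)$ and $B^c_{4L}(z')$ with $z, z' \in Z$ force $z' \in \cW^c(z) \cap W^{su}_{r,r}(x_0)$ at center-distance at most $8L < 10L$, hence $z = z'$ by definition of $Z$; and (iii) uses that $\bigcup_{z \in Z \cap U} B^c_\ell(z)$ contains an open neighborhood of $x_0$ in $M$ by transversality of $\cW^c$ with $W^{su}_{\delta,\delta}(x_0)$, combined with $x_0 \in \supp(\mu)$. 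For (iv)--(v), I would extend the local u-holonomy $\fh^u_{z_1,z_2}$ along the center plaque $B^c_L(z_1)$ by continuation: a second intersection point $w \in B^u_r(y) \cap B^c_{2L}(z_2)$ would, after following $\cW^c(z_2)$ from $w$ back to a point of $\cW^u(z_1) \cap W^{su}_{r,r}(x_0)$ (using that $y \in B^c_L(z_1)$), produce a second point of $\cW^c(z_2) \cap W^{su}_{r,r}(x_0)$ within center-distance at most $2L + L < 10L$, contradicting $z_2 \in Z$. The analogous argument with $\cW^s$ in place of $\cW^u$ gives (v).

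The main obstacle is twofold: first, guaranteeing the existence of $x_0 \in \supp(\mu)$ with $\tau_r(x_0)$ arbitrarily large is delicate when the support contains short periodic center orbits, and this is what the Fubini-type selection of $L$ is meant to handle. Second, the verification of (iv)--(v) must turn a potential failure of the u-holonomy into a geometric statement about additional intersections of $\cW^c(z_2)$ with the cross-section; making this reduction rigorous requires carefully tracking holonomies inside a center tube of length $\sim 10L$ and uniformly bounded transverse size controlled by Proposition~\ref{p.cs-u.product}.
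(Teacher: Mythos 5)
Your plan to force $Z = W^{su}_{\delta,\delta}(x_0)$ diverges from the paper: the paper does \emph{not} claim that $Z$ is the whole sub-transversal, and item (iii) is precisely where this matters — the paper handles it by an iterative argument that keeps replacing $x_0$ by a nearby point of $\supp(\mu)$ whose center leaf meets $W^{su}_{r,r}(x_0)$ more times, and terminates because the number of hits of a bounded center plaque against the compact cross-section is uniformly bounded. Even granting that your $\tau_r$ is lower semi-continuous, your ``Fubini argument over $L$'' is not the right tool and does not address the actual difficulty: the condition $\tau_r(x_0)>10L^+$ is what the paper writes as condition (a), obtained by shrinking $r$ for a fixed $x_0$; varying $L$ in $(L^-,L^+)$ does not remove the possibility that the whole small transversal contains points with multiple re-intersections. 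You should either adopt the paper's iteration or explicitly prove semi-continuity of the return time together with the uniform separation of intersection points, neither of which your sketch actually does.

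The more serious gap is in items (iv)--(v). You assert that a second point $w\in B^u_r(y)\cap B^c_{2L}(z_2)$ ``after following $\cW^c(z_2)$ from $w$ back to a point of $\cW^u(z_1)\cap W^{su}_{r,r}(x_0)$'' produces a second point of $\cW^c(z_2)\cap W^{su}_{r,r}(x_0)$ within center-distance $<10L$. But traveling along $\cW^c(z_2)$ from $w$ leads back to $z_2$ itself — the single intersection that $z_2\in Z$ already allows — so no contradiction arises. The statement $\operatorname{Card}(B^c_{10L}(z_2)\cap W^{su}_{r,r}(x_0))=1$ is about hits against the cross-section $W^{su}_{r,r}(x_0)$, while the doubled point $w$ lives in the unstable plaque $B^u_r(y)$ which is transverse to $\cW^c$ and not contained in that cross-section; there is no direct way to convert one into the other. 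The paper's proof overcomes exactly this by introducing the tubular neighborhood $\mathfrak{T}=\bigcup_{y\in B^c_{11L}(x_0)}\cW^{su}_{r,r}(y)$ with its fiber projection $\pi\colon \mathfrak{T}\to B^c_{11L}(x_0)$, and then proves the claim that $\pi\vert_{B^c_{3L}(z)}$ is injective for $z\in Z$ via a homotopy-lifting argument (using the uniqueness of the point in $\pi^{-1}(x_0)\cap B^c_{10L}(z)$ to pin down endpoints of lifted paths). Only then does the observation that all points of $B^u_r(y)\cap B^c_{2L}(z_2)$ lie in a common $\pi$-fiber, combined with the injectivity of $\pi$ on center plaques, yield the uniqueness in (iv). This projection-plus-homotopy ingredient is entirely missing from your sketch, and without it the reduction to a second cross-section hit fails.
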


\begin{figure}[h]
\centering
\includegraphics[scale= 0.6]{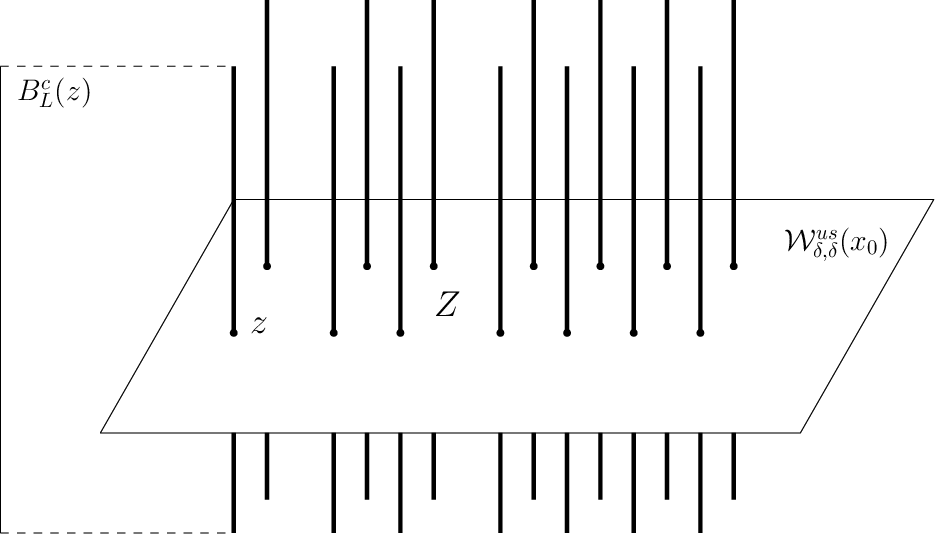}
\caption{The set $\cup_{z\in Z}B^c_{L}(z)$.}
\label{fig.Z}
\end{figure}

\begin{proof} We start with preliminary constructions.
We take $x_0\in \supp(\mu)$, $L\in (L^-,L^+)$ and $r,\gamma>0$ small such that:
\begin{itemize}
\item[(a)] $\cW^{su}_{r,r}(x_0)\cap B^c_{10L^+}(x_0)=\{x_0\}$,
\item[(b)] $d(\cW^{su}_{r,r}(x_0), \partial B^c_{10L}(z)>\gamma$ for $z\in \cW^{su}_{r,r}(x_0)$,
\item[(c)] $2\operatorname{diam}(\cW^{su}_{r,r}(x_0))<\gamma$,
\item[(d)] any 
injective holonomy $\fh\colon B^c_{L}(x)\to B^c_{2L}(y)$ along $\cW^u$ (resp. $\cW^s$), such that
$x,y\in \cW^{su}_{r,r}(x_0)$ and $d^u(\fh(z),z)<r$ (resp. $d^s(\fh(z),z)<r$) for each $z\in B^c_{L}(x)$, satisfies $B^c_{L-1}(y)\subset \fh(B^c_{L}(x)) \subset B^c_{L+1}(y)$.
\end{itemize}
Let us define $\mathfrak{T}:=\cup_{y\in B^c_{11L}(x_0)}\cW^{su}_{r,r}(y)$.
For each $p\in \mathfrak{T}$, let $\cW^{su}_\mathfrak{T}(p):=\cW^{su}_{r,r}(\pi(p))$, $\cW^u_\mathfrak{T}(p):=\cW^u_{loc}(p)\cap \cW^{su}_\mathfrak{T}(p)$ and  $\cW^{ws}_\mathfrak{T}(p):=\cW^{cs}_{loc}(p)\cap \cW^{su}_\mathfrak{T}(p)$. Since $\pi^{-1}(\pi(p))=\cW^{su}_\mathfrak{T}(p)$ has a cs$\times$u-product structure,
one can also write
$\cW^u_\mathfrak{T}(p)=\cW^u_{\pi^{-1}(\pi(p))}(p)$ and
$\cW^{ws}_\mathfrak{T}(p)=\cW^{cs}_{\pi^{-1}(\pi(p))}(p)$.
Up to reducing $r>0$, the set $\mathfrak{T}$ is a tubular neighborhood of $B^c_{11L}(x_0)$, i.e. for any $p\in \mathfrak{T}$:
\begin{itemize}
\item[(e)] there exists a unique point $\pi(p)\in B^c_{11L}(x_0)$, such that
$p\in \cW^{su}_{r,r}(\pi(p))$.
\end{itemize}
The map $\pi\colon \mathfrak{T}\to B^c_{11L}(x_0)$ is continuous.
There are $\delta,\varepsilon\in (0,r)$ such that:
\begin{itemize}
\item[(f)] $B^c_{L}(z)\subset \mathfrak{T}$, for every $z\in \cW^{su}_{\delta,\delta}(x_0)$,
\item[(g)] for $*=ws\text{ or }u$, and for every $z,z'\in \cW^{su}_{\delta,\delta}(x_0)$ with $z'\in \cW^*_{\mathfrak{T}}(z)$,
one has $\cW^*_{\mathfrak{T}}(y)\cap B^c_{10L+\gamma/2}(z')\neq \emptyset$, for every $y\in B^c_{10L}(z)$,
\item[(h)] $d^c(x,y)=\varepsilon \Rightarrow
\tfrac{9\varepsilon}{10}\leq d^c(\pi(x),\pi(y))\leq \tfrac{11\varepsilon}{10}$, for every $z\in \cW^{su}_{\delta,\delta}(x_0)$
and $x,y\in B^c_{L}(z)$.
\end{itemize}
Note that for $z\in \cW^{su}_{\delta,\delta}(x_0)$, the restriction
of $\pi$ to $B^c_{L}(z)$ is locally invertible, since it is the composition
of a u-holonomy with a s-holonomy.
Taking $\delta$ smaller, these properties imply for $z\in \cW^{su}_{\delta,\delta}(x_0)$ and $y\in B^c_{3L}(z)$:
\begin{itemize}
\item[(i)] for any path $\psi\subset B^c_{3L}(z)$ containing $z$
and length $<3L$, $\pi(\psi)$
is homotopic (endpoints fixed) in $B^c_{4L}(x_0)$
to an arc of length $<4L$,
\item[(j)] any path $\psi'\subset B^c_{11L}(x_0)$ containing $\pi(y)$ and length $<4L$, $\pi(\psi)$
has a continuous lift $\widetilde \psi\subset B^c_{10L}(z)$ for $\pi$, which is homotopic (endpoints fixed) in $B^c_{10L}(z)$
to an arc of length $<5L$.
\end{itemize}
\medskip

We then define as in the statement of the lemma:
$$Z:=\{z\in W^{su}_{\delta,\delta}(x_0)\colon\;\; \operatorname{Card}(B^c_{10L}(z)\cap W^{su}_{r,r}(x_0))=1\},$$
and check Items (i), (ii), (iv) and (v). In order to check Item (iii), one may need to change the point $x_0$
(and hence $L,r,\gamma,\delta,\varepsilon$), as we explain below.
\bigskip

\paragraph{\emph{Item (i).}}
We first prove that $Z$ has a cs$\times$u-product structure.
\begin{claim}
For any $z\in Z$, $z'\in \cW^*_{\mathfrak{T}}(z)\cap  \cW^{su}_{\delta,\delta}(x_0)$
and $*=ws\text{ or }u$ we have
$\cW^{su}_{r,r}(x_0)\cap B^c_{10L}(z')\subset \cW^*_{\mathfrak{T}}(z)$.
\end{claim}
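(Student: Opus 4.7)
The plan is to translate an arbitrary point $q\in\cW^{su}_{r,r}(x_0)\cap B^c_{10L}(z')$ across its own $*$-plaque back toward the center leaf of $z$, and then to invoke the isolation property encoded in $z\in Z$. More precisely, I want to produce
$$
q'\in\cW^*_{\fT}(q)\cap B^c_{10L}(z)\cap\cW^{su}_{r,r}(x_0),
$$
which by the defining condition $B^c_{10L}(z)\cap\cW^{su}_{r,r}(x_0)=\{z\}$ must equal $z$. Once this is done, $z=q'\in\cW^*_{\fT}(q)$ together with the symmetry of the relation ``being in the same $*$-plaque inside a common $su$-section'' forces $q\in\cW^*_{\fT}(z)$, which is the desired inclusion.

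To construct $q'$, the idea is to apply property (g) with the roles of $z$ and $z'$ exchanged. Both points lie in $\cW^{su}_{\delta,\delta}(x_0)$ and share the same $su$-section, since $\pi(z)=\pi(z')=x_0$ and hence $\cW^{su}_{\fT}(z)=\cW^{su}_{\fT}(z')=\cW^{su}_{r,r}(x_0)$; the local $*$-plaques inside this common section are symmetric, so $z\in\cW^*_{\fT}(z')$ follows from $z'\in\cW^*_{\fT}(z)$. The point $q$ lies in $B^c_{10L}(z')$ and in $\cW^{su}_{r,r}(x_0)\subset\fT$, so the swapped form of (g) produces $q'\in\cW^*_{\fT}(q)\cap B^c_{10L+\gamma/2}(z)$. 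Because $\pi(q)=x_0$, the ambient $su$-section at $q$ is again $\cW^{su}_{r,r}(x_0)$, and therefore $q'\in\cW^{su}_{r,r}(x_0)$.

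It then remains to tighten the center radius from $10L+\gamma/2$ down to $10L$ using property (b). Suppose $d^c(q',z)\geq 10L$ for contradiction; parametrise the center geodesic from $z$ to $q'$ (of length at most $10L+\gamma/2$) and let $w$ be the point on it at center-distance $10L$ from $z$. Then $w\in\partial B^c_{10L}(z)$ and $d(w,q')\leq d^c(w,q')\leq\gamma/2$. On the other hand, $q'\in\cW^{su}_{r,r}(x_0)$ and $w\in\partial B^c_{10L}(z)$, so property (b) yields $d(w,q')>\gamma$, contradicting the previous estimate. Therefore $q'\in B^c_{10L}(z)\cap\cW^{su}_{r,r}(x_0)=\{z\}$, so $q'=z$ and the claim follows.

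I expect the main obstacle to be recognising that (g) must be used ``backwards'': one should transport inside the $*$-plaque of $q$ so that the resulting point lies near $\cW^c(z)$ rather than near $\cW^c(z')$. This choice is forced by the asymmetric hypothesis ($z\in Z$ but not necessarily $z'\in Z$), and once it is made, property (b) supplies precisely the margin required to absorb the $\gamma/2$ overshoot in center distance produced by (g).
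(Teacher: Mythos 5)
Your proof is correct and follows essentially the same route as the paper: you apply property (g) with the roles of $z$ and $z'$ swapped to push $q$ to a point $q'\in\cW^*_{\fT}(q)\cap\cW^{su}_{r,r}(x_0)\cap B^c_{10L+\gamma/2}(z)$, then rule out $d^c(q',z)\geq 10L$ via property (b), forcing $q'=z$ by the defining property of $Z$, and conclude by symmetry of the $*$-plaque relation. (Your distance estimate $d(w,q')\leq\gamma/2<\gamma$ is in fact a touch cleaner than the paper's, which also invokes (c) to absorb a superfluous $\operatorname{diam}$ term.)
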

\begin{proof}
Let $z''\in \cW^{su}_{r,r}(x_0)\cap B^c_{10L}(z')$.
By Property (g) above, there exists a point $z'''\in B^c_{10L+\gamma/2}(z)\cap \cW^*_{\mathfrak{T}}(z'')$.
Since $z\in Z$, we have either $z'''=z$ or $d^c(z''',z)\geq 10L$.
Observe that the latter can not happen because this will imply (with (c)) that there exists $y\in \partial B^c_{10L}(z)$ with $d^c(z''',y)\leq \gamma/2$, so 
$$
d(\cW^{su}_{r,r}(x_0), \partial B^c_{10L}(z))\leq d(\cW^{su}_{r,r}(x_0),y)\leq \gamma/2+\operatorname{diam}\cW^{su}_{r,r}(x_0)<\gamma,
$$
contradicting Property (b). Consequently $z'''=z$
and hence $z\in \cW^*_{\mathfrak{T}}(z'')$. We have thus proved $z''\in \cW^*_{\mathfrak{T}}(z)$ as announced.
\end{proof}
Let us take $z,z'\in Z$ and consider $z''= [z,z']$.
For any point $y\in B^c_{10L}(z'')\cap \cW^{su}_{r,r}(x_0)$
the claim applied twice implies that $y\in \cW^u_{\mathfrak{T}}(z)\cap \cW^{ws}_{\mathfrak{T}}(z')$,
so $y=z''$.
We have thus proved that
$\operatorname{Card}(B^c_{10L}(z'')\cap \cW^{su}_{r,r}(x_0))=1$ and then $z''\in Z$, concluding the proof of Item (i).
\bigskip

\paragraph{\emph{Item (ii).}} It is a direct consequence of the definition of $Z$.
\bigskip

\paragraph{\emph{Item (iv).}} Its proof is based on the next property.
\begin{claim}
For every $z\in Z$, the projection $\pi:B^c_{3L}(z)\to B^c_{10L}(x_0)$ is injective.
\end{claim}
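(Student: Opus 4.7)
I argue by contradiction: assume there exist distinct points $y_1,y_2\in B^c_{3L}(z)$ with $\pi(y_1)=\pi(y_2)=p$. The strategy is to produce two continuous lifts (under $\pi$) of one and the same short arc $\alpha\subset B^c_{11L}(x_0)$, both of which must start at $z$ by the defining property of $Z$, yet which pass through the distinct points $y_1$ and $y_2$ respectively. This will contradict uniqueness of path lifting, which holds because $\pi$ restricted to a center leaf is a local homeomorphism.

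First I produce the path to be lifted. Since $B^c_{3L}(z)$ is path connected, choose $\psi_1\subset B^c_{3L}(z)$ from $z$ to $y_1$ of length $<3L$. By property~(i), $\pi(\psi_1)$ is homotopic (endpoints fixed) in $B^c_{4L}(x_0)$ to an arc $\alpha$ of length $<4L$ joining $x_0=\pi(z)$ to $p=\pi(y_1)=\pi(y_2)$. Applying property~(j) twice, once with $y=y_1$ and once with $y=y_2$ (both valid since $\alpha$ has length $<4L$ and contains $p$), I obtain continuous lifts $\widetilde\alpha^{(i)}\subset B^c_{10L}(z)$ of $\alpha$ passing through $y_i$. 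Their initial points $\widetilde\alpha^{(i)}(0)$ are lifts of $x_0$ contained in $B^c_{10L}(z)$; but the very definition of $Z$ gives $\pi^{-1}(x_0)\cap B^c_{10L}(z)=\cW^{su}_{r,r}(x_0)\cap B^c_{10L}(z)=\{z\}$, so both lifts start at $z$.

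Finally, the restriction $\pi|_{B^c_{10L}(z)}$ is a local homeomorphism: on the center leaf through $z$, $\pi$ factors as an $s$-holonomy composed with a $u$-holonomy, and property~(h) quantifies the local bi-Lipschitz behavior. The standard clopen argument for lifts under a local homeomorphism then ensures that two continuous lifts of the same path which agree at one parameter must agree everywhere. Since $\widetilde\alpha^{(1)}(0)=z=\widetilde\alpha^{(2)}(0)$, the lifts coincide; in particular $y_1=\widetilde\alpha^{(1)}(1)=\widetilde\alpha^{(2)}(1)=y_2$, contradicting $y_1\neq y_2$.

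The main delicate point of the plan is to read property~(j) as producing a lift that actually passes through the chosen point $y$ rather than some arbitrary lift inside $B^c_{10L}(z)$; this is the natural interpretation, and one can always arrange it by combining (j) with a local $\pi$-inverse defined in a neighborhood of $\pi(y)=p$ and extending by continuity.
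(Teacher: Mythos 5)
Your proof is correct and follows essentially the same line as the paper's argument: both start from the same contradiction hypothesis, both use property~(i) to replace $\pi(\psi_1)$ by a short arc $\alpha$ inside $B^c_{4L}(x_0)$, both use property~(j) to lift back into $B^c_{10L}(z)$, and both use the defining property of $Z$ (namely $\pi^{-1}(x_0)\cap B^c_{10L}(z)=\{z\}$) to pin the starting point of each lift to $z$. The one genuine variation is in the concluding step: the paper produces a single lift $\widetilde\psi_1$ of $\psi'_1$ containing $y_2$, and then lifts the homotopy between $\pi(\psi_1)$ and $\psi'_1$ (using uniqueness of lifts to identify the lifted family's endpoints with $\psi_1$ and $\widetilde\psi_1$) to conclude $y_1=y_2$ from the fixed-endpoint property. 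You instead apply~(j) twice to get two lifts of $\alpha$ through $y_1$ and $y_2$, observe that both must start at $z$, and invoke uniqueness of continuous lifts under the local homeomorphism $\pi|_{B^c_{10L}(z)}$ (the clopen argument) to conclude directly. Your route is a bit more economical, since it bypasses the homotopy-lifting step entirely, while still resting on the same underlying facts — (i), (j), the single preimage of $x_0$ in $B^c_{10L}(z)$, and local invertibility of $\pi$ along the center (the composition of u- and s-holonomies). Your caveat about how to read property~(j) — that the lift must pass through the chosen $y$ — is well taken, and it is indeed the intended reading: the paper explicitly says ``it admits a lift $\widetilde\psi_1$ containing $y_2$''.
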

\begin{proof}
Let us assume by contradiction that there exist
$z\in Z$ and $y_1\neq y_2\in B^c_{3L}(z)$ such that $\pi(y_1)=\pi(y_2)=:y'$.
There exists a geodesic path $\psi_1\subset B^c_{3L}(z)$ from $z$ to $y_1$ whose length is smaller than $3L$. See Figure~\ref{fig.projection}.
\begin{figure}[h]
\centering
\includegraphics[scale= 0.6]{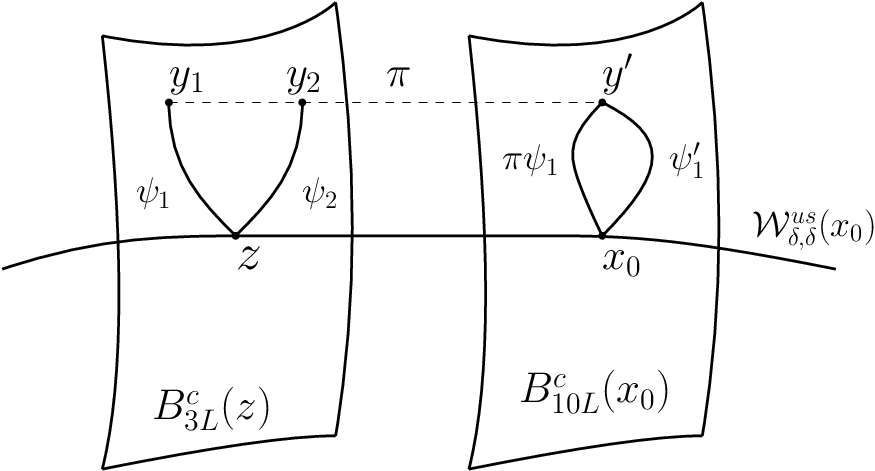}
\caption{Proof of Lemma~\ref{l.small.product.structure}, Item (iv)}
\label{fig.projection}
\end{figure}

The path $\pi(\psi_1)$ connects $x_0$ to $y'$
and, by Property (i), is homotopic (enpoints fixed) to an arc $\psi'_1$ with length smaller than $4L$.
By Property (j), it admits a lift $\widetilde \psi_1$ containing $y_2$ which is
homotopic (endpoints fixed) in $B^c_{10L}$ to an arc $\psi_2$ with length smaller than $5L$.
Note that $\psi_2$ is contained in a $B^c_{8L}(z)$ and connects $y_2$ to some point in $(\pi|_{B^c_{10L}(z)})^{-1}(x_0)$. By definition of $Z$ this endpoint is necessarily $z$, hence $\psi_2$ actually contained in $B^c_{4L}(z)$. 

We deduce that $\psi_1$ and $\widetilde \psi_1$ have the same endpoint $z$.
The homotopy between $\pi(\psi_1)$ and $\psi'_1$ can thus be lifted as an homotopy (endpoints fixed).
between $\psi_1$ and $\widetilde \psi_1$. Consequently the arcs $\psi_1,\widetilde \psi_1$ are homotopic and
have the same endpoints. This implies $y_1=y_2$, a contradiction.
\end{proof}

Let $z_1\in Z$, $z_2\in \cW^u_{Z}(z_1)$ and $y\in B^c_L(z_1)$.
The points in $B_r^u(y)\cap B^c_{2L}(z_2)$ belong to $\mathfrak{T}$ (by Property (f))
and have the same projection by $\pi$ (since they are all contained in a ball $B_r^u(y)$).
The injectivity in the previous claim implies that $B_r^u(y)\cap B^c_{2L}(z_2)$
contains at most one point and by Property (g) the intersection is non-empty.
Hence $\operatorname{Card}(B_r^u(y)\cap B^c_{2L}(z_2))=1$.
By Property (d), we have that $B^c_{L-1}(z_2)\subset\fh^u_{z_1,z_2}(B^c_{L}(z_1))\subset B^c_{L+1}(z_2)$.
Item (iv) is now proved.
\bigskip

\paragraph{\emph{Item (v).}} It has an analogous proof: observe that $\operatorname{Card}(B_r^s(y)\cap B^c_{2L}(z_2))>1$ implies $\cW^{su}_{\mathfrak{T}}(y)\cap B^c_{3L}(z_2)>1$.
\bigskip

\paragraph{\emph{Item (iii).}}
If this item is not satisfied, there exist $\ell>0$ and a neighborhood $U$ of $x_0$ such that $\mu(\cup_{z\in Z\cap U} B^c_\ell(z))=0$. Take a new point $x_1$ in the support of $\mu$, close to $x_0$, such that $x_1\in B^c_{L/3}(z)$ for some $z \in\cW^{su}_{\delta,\delta}(x_0)\setminus Z$. In particular
$B^c_{10L+L/3}(x_1)$ intersects $\cW^{su}_{r,r}(x_0)$ at least twice.

With the new point $x_1$, we repeat the constructions done for $x_0$.
Note that one can modify slightly $L$ (and hence $\mathfrak{T}$)
but keep Properties (a-j) we have obtained for $x_0$.
One can in this way select new numbers $r_1,\gamma_1,\delta_1$ and build new sets
$\mathfrak{T}_1, Z_1$ so that Properties (a-j) hold for $x_1$.
By choosing $r_1$ small enough, one can furthermore require the following additional property:
\begin{itemize}
\item[(k$_1$)]
every $y\in \cW^{su}_{r_1,r_1}(x_1)$ belongs to some $B^c_{L/2}(z)$ with $z\in \cW^{su}_{r,r}(x_0)$;
furthermore $B^c_{10L}(z)$ intersects $\cW^{su}_{r,r}(x_0)$ twice.
\end{itemize}

While Item (iii) is not satisfied,
one repeats inductively the constructions and build a sequence of points $x_n$ in the support of $\mu$
and numbers $\delta_n,r_n$.
Moreover one can require that:
\begin{itemize}
\item[(k$_n$)]
every $y\in \cW^{su}_{r_n,r_n}(x_n)$ belongs to a $B^c_{L/2^n}(z)$, $z\in \cW^{su}_{r_{n-1},r_{n-1}}(x_{n-1})$;
furthermore $B^c_{10L}(z)$ intersects $\cW^{su}_{r_{n-1},r_{n-1}}(x_{n-1})$ twice.
\end{itemize}
By construction the points $y\in \cW^{su}_{r_n,r_n}(x_n)$ belong to some $B^c_{\tfrac{2^n-1}{2^n}L}(z)$,
where $z\in \cW^{su}_{r,r}(x_0)$.
Moreover by Property (k$_n$),
the plaque $B^c_{10L+\tfrac{2^n-1}{2^n}L}(z)$ intersects $\cW^{su}_{r,r}(x_0)$
at least one more time than the plaque $B^c_{10L+\tfrac{2^{n-1}-1}{2^{n-1}}L}(x_{n-1})$
One deduces that
$B^c_{10L+\frac{2^{n}-1}{2^{n}}L}(x_{n})$ intersects $\cW^{su}_{r,r}(x_0)$ at least $n+1$ times.

The intersection points of a leaf $\cW^c(z)$ with $\cW^{su}_{r,r}(x_0)$ are separated from each other
by a uniform distance $2\varepsilon_0>0$ inside $\cW^c(z)$.
Since the volume of $B^c_{10L+\frac{2^{n}-1}{2^{n}}L+\varepsilon_0}(x_{n})$
is uniformly bounded in $n$, the number of its intersection points with $\cW^{su}_{r,r}(x_0)$ is bounded.
This shows that the construction has to stop after some step $n$.
We then replace $x_0,r,\delta,Z$ by $x_n,r_n,\delta_n$.
During the construction the number $L$ has slightly changed also.
Item (iii) is now satisfied, while Items (i), (ii), (iv), (v) remain unchanged.
This ends the proof of Lemma~\ref{l.small.product.structure}.
\end{proof}

\subsection{Proof of Theorem~\ref{t.prod.struct}}
We first prove that for each $x\in M$ and any $y\in \cW^{cu}(x)$,
the intersection $\cW^c(x)\cap \cW^u(y)$ is non-empty.
\begin{claim}
For any $x\in M$,
the union $\cW^u\cW^c(x)$ of the leaves $\cW^u(z)$ for $z\in \cW^c(x)$
coincides with $\cW^{cu}(x)$.
\end{claim}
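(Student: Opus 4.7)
The plan is to show that $A := \cW^u \cW^c(x)$ is both open and closed inside the connected manifold $\cW^{cu}(x)$; since $A$ contains $x$, this forces $A = \cW^{cu}(x)$. For openness, take $y \in A$ and $z \in \cW^c(x) \cap \cW^u(y)$. The transverse sub-foliation of $\cW^{cu}$ by $\cW^c$ and $\cW^u$ provides a local product chart $U$ near $y$, and a local $\cW^u$-holonomy sending a small $\cW^c$-plaque through $y$ onto a small $\cW^c$-plaque through $z$ (which lies in $\cW^c(x)$). This puts the source plaque in $A$. Since $A$ is tautologically $\cW^u$-saturated (a union of full $\cW^u$-leaves), and every point of $U$ lies on the $\cW^u$-plaque of some point of the source plaque, all of $U$ lies in $A$.

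To set up closedness, I join any $y\in \cW^{cu}(x)$ to $x$ by a finite concatenation of $\cW^c$- and $\cW^u$-arcs (obtained by covering a continuous path with local product charts) and induct on the number of arcs. A $\cW^u$-arc stays inside $A$ for free by $\cW^u$-saturation, so it is enough to show that whenever a $\cW^c$-leaf $L$ meets $A$ one has $L \subset A$. By connectedness of $L$ together with the openness argument above restricted to $L$, this reduces to proving that $A\cap L$ is closed in $L$.

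For the latter, suppose $y_n \to y$ in $L$ with $y_n \in A$ and pick $z_n \in \cW^u(y_n) \cap \cW^c(x)$. The main obstacle is that the $\cW^u$-distance $d^u(y_n, z_n)$ is a priori unbounded, so one cannot extract a limit from $(z_n)$ directly. The way around is to iterate backward: $f^{-1}$ contracts $\cW^u$-distances uniformly, so $d^u(f^{-N}(y_n), f^{-N}(z_n))$ can be made arbitrarily small for $N$ large; simultaneously, the non-shrinking center hypothesis gives $d^c(f^{-N}(y_n), f^{-N}(y)) \leq K_0(d^c(y_n, y)+c_0)$, uniformly bounded in $n$ since $y_n\to y$ in $L$. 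A diagonal choice $N = N(n) \to \infty$ combined with a compactness extraction near the backward orbit of $y$ produces an intersection point in $\cW^u(f^{-N}(y)) \cap \cW^c(f^{-N}(x))$, whose $f^{N}$-image lies in $\cW^u(y)\cap \cW^c(x)$. Hence $y \in A$, completing the argument.
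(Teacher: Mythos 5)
Your overall strategy matches the paper's: show $A := \cW^u\cW^c(x)$ is open and closed in the connected leaf $\cW^{cu}(x)$, pushing backward to contract $\cW^u$-distances while the non-shrinking hypothesis keeps the center drift uniformly bounded. The paper packages open and closed into a single statement (a uniform $\delta$-neighborhood of any point of $A$ stays in $A$), whereas you handle them separately; your openness argument is fine.

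The gap is in the closedness step, at ``a compactness extraction near the backward orbit of $y$ produces an intersection point in $\cW^u(f^{-N}(y))\cap\cW^c(f^{-N}(x))$.'' Passing to convergent subsequences of $f^{-N(n)}(y)$, $f^{-N(n)}(y_n)$, $f^{-N(n)}(z_n)$ does give a limit point $q$, but $q$ has no reason to lie on $\cW^c(f^{-N}(x))$ for any \emph{fixed} $N$: the iterate $N(n)$ changes with $n$, and since $z_n$ can be unboundedly far from $x$ along $\cW^c(x)$, the leaves $\cW^c(f^{-N(n)}(x))$ do not accumulate on the center leaf through $q$. What is actually needed (and what the paper itself invokes tacitly when asserting that $f^{-n}(B^{cu}_\delta(y))$ lands in the $\varepsilon$-neighborhood of $\cW^c(f^{-n}(x))$, even though the non-shrinking estimate only bounds the center drift of that preimage by the fixed constant $K_0(\delta+c_0)$, not by something small) is a uniform holonomy-extension statement at a \emph{fixed but arbitrary} center scale: by compactness of $M$ and continuity of the foliations, for every $C>0$ there is $\eta(C)>0$ so that if two $\cW^c$-plaques in a common $\cW^{cu}$-leaf are at $\cW^u$-distance $<\eta(C)$ at one point, the $\cW^u$-holonomy between them extends along every $\cW^c$-arc of length at most $C$. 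You should then work with a \emph{single} large $n$, not a limit: set $C=K_0(\sup_m d^c(y_m,y)+c_0)$, pick $n$ with $d^u(f^{-N(n)}(y_n),f^{-N(n)}(z_n))<\eta(C)$, and slide $f^{-N(n)}(z_n)$ through the holonomy along the center arc of length $\leq C$ joining $f^{-N(n)}(y_n)$ to $f^{-N(n)}(y)$, landing on a point of $\cW^u(f^{-N(n)}(y))\cap\cW^c(f^{-N(n)}(x))$. The compactness extraction alone does not produce that point.
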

\begin{proof}
Note that it is enough to prove that there exists $\delta>0$ such that
for any $x\in M$ and any $y\in \cW^u\cW^c(x)$, the $\delta$-neighborhood of $y$ in
$\cW^{cu}(x)$ is also contained in $\cW^u\cW^c(x)$.
The uniform transversality of the foliations $\cW^u$ and $\cW^c$ ensures that
for $\varepsilon>0$ small enough and for any $x$, the $\varepsilon$-neighborhood of $\cW^c(x)$
in $\cW^{cu}(x)$ is contained in $\cW^u\cW^c(x)$.

Let us fix $x\in M$ and $y\in \cW^u\cW^c(x)$.
Since $f$ does not shrink the center, for $n$ large enough,
the preimage of the $\delta$-neighborhood of $y$ by $f^{n}$ in $\cW^{cu}(x)$
is contained in the $\varepsilon$-neighborhood of $\cW^c(f^{-n}(x))$,
hence in $\cW^u\cW^c(f^{-n}(x))$. By invariance of the foliations,
the $\delta$-neighborhood of $y$ in $\cW^{cu}(x)$
is contained in $\cW^u\cW^c(x)$ as required.
\end{proof}

Let us consider the invariant set $X$ of points $x$ which satisfy the conclusion of Theorem~\ref{t.prod.struct}.
It is enough to prove that it has positive $\mu_0$-measure for any ergodic measure $\mu_0$.
Indeed if $X$ does not have not full measure for some invariant measure $\mu$, one would find
an ergodic component of $\mu$ which gives measure zero to $X$, a contradiction.

Let us fix an ergodic measure $\mu_0$ and let us assume by contradiction that there exists a full measure set $\widetilde X$
of points $x$ such that
there are two different points $y_1,y_2\in \cW^{c}(x)$
with $\cW^u(y_1)=\cW^u(y_2)$.
Note that one can reduce the set $\widetilde X$ and assume that some number $L^->0$
satisfies, for each such $x,y_1,y_2$, the inequalities
$d^u(y_1,y_2)<L^-$
and $L^->K_0 (\max\{d^c(x,y_1),d^c(x,y_2)\}+c_0)$ where $K_0,c_0>0$ are the numbers which appears in the definition
at the beginning of Section~\ref{s.global.prod.structure} (the dynamics does not shrink the center).
We also set $L^+=L^-+1$.
Lemma~\ref{l.small.product.structure} gives us $r>0$ and a set $Z$.
Item (iii) of Lemma~\ref{l.small.product.structure} and the ergodicity of $\mu_0$ ensure that there exists
$x\in \widetilde X$ which has arbitrarily large backward iterates $f^{-n}(x)$
which belong to plaques $B^c_{L-L^-}(z)$ for some $z\in Z$.

Let $y_1,y_2 \in \cW^{c}(x)$ be the points associated to $x$.
Since $n$ is large, one has $d^u(f^{-n}(y_1),f^{-n}(y_2))<r$.
Since $f$ does not shrink the center and
$K_0(d^c(x,y_i)+c_0)<L^-$, one gets
$d^c(f^{-n}(x),f^{-n}(y_i))<L^-$, so that $f^{-n}(y_1)$ and $f^{-n}(y_2)$ belongs to
a plaque $B^c_{L}(z)$ for some $z\in Z$.
Setting $z_1=z_2=z$,
one has found a point $y\in B^c_L(z)$ such that $W^u_r(y)$ intersects $B^c_L(z)$.
This contradicts Lemma~\ref{l.small.product.structure}, Item (iv).
Hence $\mu_0(X)>0$ and Theorem~\ref{t.prod.struct} is proved.
\qed

\subsection{An additional property}
Having proved Theorem~\ref{t.prod.struct}, one can improve
Lemma~\ref{l.small.product.structure} and obtain additional property on the $Z$:

\begin{lemma}\label{l.small.product.structure2}
Let us consider $Z$ as in Lemma~\ref{l.small.product.structure}.
Then, there exists an invariant full measure set $\Omega\subset M$ such that $Z$ contains any point $z'\in W^{su}_{\delta,\delta}(x_0)$
satisfying $B^c_{2L}(z')\cap \Omega\neq \emptyset$
and $W^u_{2\delta}(z')\cap Z\neq \emptyset$.
\end{lemma}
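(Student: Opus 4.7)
I would take $\Omega$ to be the $f$-invariant full measure subset of $M$ provided by Theorem~\ref{t.prod.struct} (if necessary replacing it by $\bigcap_{n\in\integer}f^n(\Omega)$ to make it invariant). The key property I will use is: for every $q\in\Omega$ and every $y\in\cW^{cu}(q)$, the intersection $\cW^u(y)\cap\cW^c(q)$ is a single point. Fix $z'\in W^{su}_{\delta,\delta}(x_0)$ with $q\in B^c_{2L}(z')\cap\Omega$ and $p\in W^u_{2\delta}(z')\cap Z$; I want to show $z'\in Z$, i.e. $B^c_{10L}(z')\cap W^{su}_{r,r}(x_0)=\{z'\}$. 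I will argue by contradiction: suppose there exists $w\in B^c_{10L}(z')\cap W^{su}_{r,r}(x_0)$ with $w\neq z'$, and construct a forbidden second point in $B^c_{10L}(p)\cap W^{su}_{r,r}(x_0)$, contradicting $p\in Z$.

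The candidate is the local product $[w,p]\in\cW^u_{loc}(w)\cap\cW^{cs}_{loc}(p)$, which is well defined because $w,p$ both lie in $W^{su}_{r,r}(x_0)$ and this box has a global cs$\times$u-product structure; in particular $[w,p]\in W^{su}_{r,r}(x_0)$. Two things must then be checked: that $[w,p]\in B^c_{10L}(p)$ (in particular, that it lies on $\cW^c(p)$) and that $[w,p]\neq p$. For the first, I will use that $p\in\cW^u(z')\subset\cW^{cu}(z')$ and $w\in\cW^c(z')\subset\cW^{cu}(z')$, so $\cW^{cu}(w)=\cW^{cu}(p)$, and therefore $[w,p]\in\cW^u(w)\subset\cW^{cu}(p)$. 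Combined with $[w,p]\in\cW^{cs}_{loc}(p)$ and the standard local product identity $\cW^{cu}_{loc}(p)\cap\cW^{cs}_{loc}(p)=\cW^c_{loc}(p)$ (valid at the scale $\varepsilon_0$ fixed in Section~\ref{ss.ball}; note that the cs-displacement $\delta'$ provided by Proposition~\ref{p.cs-u.product} can be taken arbitrarily small, in particular smaller than any constant depending on $L$), this forces $[w,p]\in\cW^c_{loc}(p)$ with $d^c([w,p],p)<10L$.

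For the second point, the key use of $\Omega$ enters. If $[w,p]=p$, then by the product structure $w$ and $p$ share the same cs-coordinate in the su-box, i.e. $w\in\cW^u(p)=\cW^u(z')$. Since moreover $w\in\cW^c(z')=\cW^c(q)$, both $w$ and $z'$ lie in $\cW^u(z')\cap\cW^c(q)$. Because $q\in\Omega$ and $z'\in\cW^{cu}(q)$, Theorem~\ref{t.prod.struct} tells us that this intersection is a singleton, so $w=z'$, contradicting $w\neq z'$. Hence $[w,p]\neq p$, and $[w,p]\in B^c_{10L}(p)\cap W^{su}_{r,r}(x_0)\setminus\{p\}$, contradicting $p\in Z$. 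This rules out $w$ and shows $z'\in Z$.

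The step I expect to be the main technical point is verifying that $[w,p]$ actually lands in $\cW^c_{loc}(p)$ (rather than merely $\cW^{cs}_{loc}(p)$): this is where one must combine the local transversality of $\cW^{cu}$ and $\cW^{cs}$ with the observation that $w$, $p$ and $z'$ all live in a single $\cW^{cu}$-leaf. The role of $\Omega$, by contrast, is used only once, to prevent the degenerate case $[w,p]=p$, and is very clean.
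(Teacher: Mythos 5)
Your choice of $\Omega$ (the set $X$ from Theorem~\ref{t.prod.struct}) matches the paper's, and your handling of the degenerate case $[w,p]=p$ via Theorem~\ref{t.prod.struct} is correct. The approach itself diverges from the paper's: rather than showing directly (via the first claim in the proof of Lemma~\ref{l.small.product.structure}) that any $w\in B^c_{10L}(z')\cap\cW^{su}_{r,r}(x_0)$ lies in $\cW^u_{\mathfrak{T}}(z)$ and hence equals $z'$ by global uniqueness, you try to transport $w$ to a second point $[w,p]$ in $B^c_{10L}(p)\cap\cW^{su}_{r,r}(x_0)$ and contradict $p\in Z$.

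That transport step has a genuine gap. You establish $[w,p]\in\cW^{cu}(p)$ (the global leaf) and $[w,p]\in\cW^{cs}_{loc}(p)$, and then invoke the local product identity $\cW^{cu}_{loc}(p)\cap\cW^{cs}_{loc}(p)=\cW^c_{loc}(p)$ to conclude $d^c([w,p],p)<10L$. But that identity only applies to points of the \emph{local} plaque $\cW^{cu}_{loc}(p)=B^{cu}_{\varepsilon_0}(p)$, and you have not shown $[w,p]$ lands there. Membership in the global leaf $\cW^{cu}(p)$ together with small $d^{cs}$-distance to $p$ does not bound the intrinsic $d^{cu}$- or $d^c$-distance: the leaf $\cW^{cu}(p)$ may recur into $B^{cs}_{\delta'}(p)$ along a sheet other than the one through $p$, so $\cW^{cu}(p)\cap\cW^{cs}_{loc}(p)$ can be a union of several center plaques, of which $\cW^c_{loc}(p)$ is only one. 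Shrinking $\delta'$ does not help, since it controls only the cs-displacement. Establishing $[w,p]\in B^c_{10L}(p)$ thus requires controlling the u-holonomy from $\cW^c(z')$ to $\cW^c(p)$ along a center path of length up to $10L$, which is exactly what the carefully chosen scales $(r,\gamma,\delta)$ and conditions (b), (c), (d), (g) in the proof of Lemma~\ref{l.small.product.structure} are set up to do, and what the first claim of that proof packages. The paper's proof sidesteps your difficulty entirely by invoking that claim to place $w$ inside $\cW^u_{\mathfrak{T}}(z)$ and then applying Theorem~\ref{t.prod.struct} to the pair $(\cW^u(z),\cW^c(z'))$. As written, your argument is incomplete without some substitute for that claim.
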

\begin{proof}
Let $\Omega$ coincide with the full measure set $X$ given by Theorem~\ref{t.prod.struct}
and consider $r,L>0$ and a set $Z$ as in Lemma~\ref{l.small.product.structure}.
Let us consider $z\in Z$ and $z' \in \cW^{su}_{\delta,\delta}(x_0)\cap \cW^u_{2r}(z)$, such that $B^c_{2L}(z')\cap X\neq\emptyset$. We have to prove that $z'$ belongs to $Z$.
By Theorem~\ref{t.prod.struct}, since $\cW^c(z')$ meets $X$,
the unstable leaf $\cW^u(z)$ intersects $\cW^c(z')$ in a unique point, which has to be $z'$.
Let $z''\in B^c_{10L}(z')\cap \cW^{su}_{r,r}(x_0)$. By the first claim in the proof of Lemma~\ref{l.small.product.structure},
$z''\in \cW^u_{\mathfrak{T}}(z)\cap \cW^c(z')$, hence $z'=z''$.
We have proved that $\operatorname{Card}(B^c_{10L}(z')\cap \cW^{su}_{r,r}(x_0))=1$, and $z'$ belongs to $Z$.
\end{proof}
This property will be used twice:
first to check a covering property (Proposition~\ref{p.crossing}),
then to conclude the proof of the Main Theorem (Section 5.5).


\section{Transverse entropy on a product space}\label{s.trans.entropy}
In this section we temporarily abandon the dynamics and
define the notion of transverse entropy.
We then establish a criterion for u-invariance in section~\ref{ss.zerogap}.

\subsection{Transverse entropy of a partition}
Let us consider two standard Borel spaces $X^c,X^u$ and a probability measure $\mu^{cu}$ on the product $X^{cu}:=X^u\times X^c$.
To any point $x=(x^u,x^c)$, one associates the horizontal $X^u(x):=X^c\times \{x^u\}$
and vertical $X^c(x):=\{x^c\}\times X^u$.

Rokhlin's theorem~\cite{rohklin} also associates horizontal and vertical disintegrations of $\mu^{cu}$,
i.e. collections $\{\mu^u_x\}$ and $\{\mu^c_x\}$ of probabilities
on the horizontal $X^u(x)$ and vertical $X^c(x)$ of almost every point $x$.

For any mesurable set $R\subset X^{cu}$ with positive $\mu^{cu}$-measure, we define
$$\mu_R^{cu}:= \frac {\mu^{cu}|_R} {\mu^{cu}(R)},$$
$$\mu^{u}_{R,x}:= \frac {\mu^{u}_x|_{R\cap X^u(x)}} {\mu^{u}_x(R\cap X^u(x))}
\quad \text{ for $\mu^{cu}_R$-almost every $x$}.$$

If $\cP$ is a finite measurable partition of $X^{cu}$ we denote by $\cP(x)$ the atom which contains $x$ and by $\cP|_R$ the partition induced by $\cP$ on $R$. We denote $\cP'\prec \cP$ when $\cP$ is a partition finer than $\cP'$.
We then introduce the entropy of the partition $\cP|_R$ and the entropy of $\cP|_R$ along the horizontals:
$$
H_{\mu^{cu}}(R,\cP)=-\int_R \log \mu_R^{cu}(R\cap \cP(x))d\mu^{cu}_R(x),
$$
$$
H^u_{\mu^{cu}}(R,\cP)=-\int_R \log \mu^{u}_{R,x}(R\cap \cP(x)\cap X^u(x))d\mu_R^{cu}(x).
$$

\begin{definition}
The \emph{transverse entropy} of the partition $\cP|_R$ for the measure $\mu^{cu}_R$ with respect to the horizontals is:
$$
H^{tr}_{\mu^{cu}}(R,\cP):=H_{\mu^{cu}}(R,\cP)-H^u_{\mu^{cu}}(R,\cP).
$$
\end{definition}
When there is no ambiguity we omit the subindex $\mu^{cu}$.

\subsection{Properties of the transverse entropy}
\begin{proposition}\label{p.gap.prop}
The following properties hold:
\begin{enumerate}
\item[(i)] $H^{tr}(R,\cP)\geq 0$.
\item[(ii)] $H^{tr}(R,\cP)= 0$ if and only if for $\mu^{cu}$-almost every $x\in R$,
$$\mu^u_{R,x}(R\cap\cP(x)\cap X^u(x))=\mu^{cu}_{R}(R\cap\cP(x)).$$
\item[(iii)] If $\cP'\prec \cP$ then $H^{tr}(R,\cP')\leq H^{tr}(R,\cP)$. Moreover
$$H^{tr}(R,\cP)=H^{tr}(R,\cP')+\sum_{P'\in \cP'}\mu^{cu}_R(P')H^{tr}(R\cap P',\cP).$$ 
\item[(iv)] If $R'\subset R$ with $X^u(x)\cap R=X^u(x)\cap R'$ for $\mu^{cu}$-almost every $x\in R'$,
$$H^{tr}(R,\cP)\geq \mu^{cu}_{R}(R')H^{tr}(R',\cP).$$
\end{enumerate}
\end{proposition}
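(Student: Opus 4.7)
The plan is to recast the transverse entropy as an averaged Kullback--Leibler divergence and then apply the standard information-theoretic identities (Jensen/Gibbs and the chain rule for entropy). Let $\pi\colon R\to T$ denote projection to the quotient space of horizontal leaves and set $\bar\mu=\pi_*\mu^{cu}_R$, so that $\mu^{cu}_R=\int \mu^u_t\,d\bar\mu(t)$ and $\mu^{cu}_R(P)=\int \mu^u_t(P)\,d\bar\mu(t)$ for every atom $P\in\cP$. Substituting this identity into the definition of $H_{\mu^{cu}}(R,\cP)$ and comparing with $H^u_{\mu^{cu}}(R,\cP)=\int\sum_{P}\mu^u_t(P)\bigl(-\log\mu^u_t(P)\bigr)\,d\bar\mu(t)$ yields
\[
H^{tr}(R,\cP)\;=\;\int \sum_{P\in\cP}\mu^u_t(P)\log\frac{\mu^u_t(P)}{\mu^{cu}_R(P)}\,d\bar\mu(t).
\]
The integrand is a Kullback--Leibler divergence on the finite set $\cP$, hence non-negative, proving (i); it vanishes iff $\mu^u_t(P)=\mu^{cu}_R(P)$ for $\bar\mu$-a.e.\ $t$ and every $P$, which is the condition in (ii).

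For (iii), I would invoke the classical refinement identity $H_\nu(\cP)=H_\nu(\cP')+\sum_{P'}\nu(P')H_{\nu_{P'}}(\cP)$, valid for any probability $\nu$ and $\cP'\prec\cP$ with $\nu_{P'}=\nu|_{P'}/\nu(P')$. Applied to $\nu=\mu^{cu}_R$ it produces the refinement formula for $H_{\mu^{cu}}(R,\cP)$. Applied leaf-wise to each $\mu^u_t$ and then integrated against $\bar\mu$, it yields the same type of identity for $H^u_{\mu^{cu}}(R,\cP)$, once one observes that the horizontal disintegration of $\mu^{cu}_{R\cap P'}$ at $x$ is the normalised restriction $\mu^u_{R,x}|_{P'}/\mu^u_{R,x}(P')$ (a direct check from the disintegration formula). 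Subtracting the two refinement identities gives the claimed equality in (iii), and the monotonicity $H^{tr}(R,\cP')\leq H^{tr}(R,\cP)$ follows from the non-negativity of each $H^{tr}(R\cap P',\cP)$ established in (i).

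For (iv) I would introduce the partition $\cQ=\{R',R\setminus R'\}$. The hypothesis $X^u(x)\cap R=X^u(x)\cap R'$ for $\mu^{cu}$-a.e.\ $x\in R'$ forces $R'$ to agree modulo a null set with a union of horizontal leaves, i.e.\ $\cQ$ is measurable with respect to the horizontal $\sigma$-algebra. Consequently the trace of $\cQ$ on $\mu^u_t$-a.e.\ horizontal leaf is trivial, so $H^u_{\mu^{cu}}(R,\cQ)=0$ and $H^u_{\mu^{cu}}(R,\cP\vee\cQ)=H^u_{\mu^{cu}}(R,\cP)$. Combining these facts with the refinement identity of (iii) applied to $\cQ\prec\cP\vee\cQ$ for both $H$ and $H^u$ produces the mutual-information decomposition
\[
H^{tr}(R,\cP)\;=\;I_{\mu^{cu}_R}(\cP;\cQ)\;+\;\sum_{Q\in\cQ}\mu^{cu}_R(Q)\,H^{tr}(R\cap Q,\cP),
\]
where $I_{\mu^{cu}_R}(\cP;\cQ)=H(\cP)+H(\cQ)-H(\cP\vee\cQ)\geq0$ is Shannon mutual information. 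Dropping the non-negative $I$-term and the $Q=R\setminus R'$ summand then yields $H^{tr}(R,\cP)\geq\mu^{cu}_R(R')H^{tr}(R',\cP)$.

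The only analytical ingredient throughout is concavity of $\log$; the rest is bookkeeping. The main obstacle is precisely keeping this bookkeeping clean: one must verify carefully that the horizontal disintegration of $\mu^{cu}_{R\cap Q}$ is the normalised restriction of the original horizontal disintegration $\mu^u_{R,\cdot}$ (used in both (iii) and (iv)), and that the pointwise hypothesis in (iv) really promotes the condition ``same horizontals in $R$ and in $R'$'' to genuine measurability of $\cQ$ with respect to the horizontal $\sigma$-algebra. Neither point is deep, but both must be handled at the level of null sets for the entropy identities to hold verbatim.
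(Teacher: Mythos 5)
Your proof is correct, and for parts (i)--(iii) it takes essentially the same route as the paper: recasting the gap $H-H^u$ as an integrated Kullback--Leibler divergence is the same use of Jensen's inequality (applied to $-\log$) that the authors make, and the chain rule for $H$ and $H^u$ (with the observation that the horizontal disintegration of $\mu^{cu}_{R\cap P'}$ is the normalised restriction $\mu^u_{R,x}|_{P'}/\mu^u_{R,x}(P')$) is exactly how the paper derives the refinement identity for $\Gap$.

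For (iv), however, you take a genuinely different and arguably more transparent route. The paper decomposes $H^u(R,\cP)$ \emph{exactly} as $\mu^{cu}_R(R')H^u(R',\cP)+\mu^{cu}_R(R\setminus R')H^u(R\setminus R',\cP)$ — here the hypothesis that $R'$ is u-saturated in $R$ is used precisely to guarantee $\mu^u_{R,x}=\mu^u_{R',x}$ on $R'$ — and bounds $H(R,\cP)$ from below by a second application of Jensen to $\phi(t)=-t\log t$. Subtracting the two gives the inequality, but the size of the defect is left implicit inside Jensen. You instead introduce the coarse partition $\cQ=\{R',R\setminus R'\}$, observe that the hypothesis makes $\cQ$ measurable with respect to the horizontal $\sigma$-algebra (mod $\mu^{cu}$-null sets, so $H^u(R,\cQ)=0$ and $H^u(R,\cP\vee\cQ)=H^u(R,\cP)$), and then apply the already-proved refinement identity (iii) to $\cQ\prec\cP\vee\cQ$. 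This yields the exact decomposition $H^{tr}(R,\cP)=I_{\mu^{cu}_R}(\cP;\cQ)+\sum_{Q\in\cQ}\mu^{cu}_R(Q)H^{tr}(R\cap Q,\cP)$, identifying the two discarded non-negative terms as a Shannon mutual information and the $Q=R\setminus R'$ transverse entropy. The upshot is that your argument reuses (iii) rather than re-invoking Jensen from scratch, and it gives an explicit, interpretable expression for the slack in (iv) — useful if one ever needs to quantify how far from equality one is. The paper's argument is slightly shorter to write down but yields the bound without the explanatory identity. Both are valid; the one piece of bookkeeping you correctly flag as non-trivial (promoting the pointwise hypothesis to measurability of $\cQ$ with respect to the horizontal $\sigma$-algebra) does go through: for $\bar\mu$-a.e.\ leaf $t$, the hypothesis forces $\mu^u_t(R')\in\{0,1\}$.
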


\begin{proof}
Let $\alpha$ be the partition into local unstable sets $X^u(x)\cap R$ of $R$, and let $\tmu$ be measure induced on the quotient $R/\alpha$. Then,
$$
H^u(R,\cP)=\int_{R/\alpha} \sum_{P\in \cP}-\mu^u_x(P)\log \mu^{u}_x(P)d\tmu(x).
$$
Jensen inequality implies:
$$
\begin{aligned}
H^u(R,\cP)&\leq -\sum_{P\in \cP}\left(\int_{R/\alpha} \mu^u_x(P)d\tmu(x)\right)\log \left(\int_{R/\alpha}\mu^{u}_x(P)d\tmu(x)\right)\\
&= H(R,\cP)
\end{aligned}
$$
and equality holds if and only if, for every $P\in \cP$, $\mu^u_x(P\cap X^u(x)\cap R)$ is constant for $\mu^{cu}$-almost every $x$, proving (i) and (ii).

For (iii) observe that
$$H(R,\cP)=\int_{R} \left(-\log \frac{\mu^{cu}_{R}(\cP(x))}{\mu^{cu}_{R}(\cP'(x))}-\log\mu^{cu}_{R}(\cP'(x))\right) d\mu^{cu}_{R}(x),$$
so
$$
H(R,\cP)=H(R,\cP')+\sum_{P'\in \cP'}\int_{P'} -\log \frac{\mu^{cu}_{R}(\cP(x))}{\mu^{cu}_{R}(P')}d\mu^{cu}_{R}.
$$
As $\frac{1}{\mu^{cu}_{R}(P')}\mu^{cu}_{R}=\mu^{cu}_{R\cap P'}$ we have
$$
H(R,\cP)=H(R,\cP')+\sum_{P'\in \cP'}\mu^{cu}_{R}(P')H(R\cap P',\cP).
$$
An analogous formula is true for $H^u$, and taking the difference we get
$$
H^{tr}(R,\cP)=H^{tr}(R,\cP')+\sum_{P'\in \cP'}\mu^{cu}_{R}(P')H^{tr}(R\cap P',\cP).
$$

For (iv) observe that $H^u(R,\cP)=\int_R -\log \mu^u_{R,x}(\cP(x))d\mu^{cu}_R(x)$, and 
$$
H(R,\cP)=\int_{R'}-\log \mu^u_{R,x}(\cP(x))d\mu^{cu}_R(x)+\int_{R\setminus R'}-\log \mu^u_{R,x}(\cP(x))d\mu^{cu}_R(x).
$$
If $R'$ is u-saturated inside $R$, we have $\mu^u_{R,x}=\mu^u_{R',x}$. Then,
\begin{equation}\label{eq.psi}
H^u(R,\cP)=\mu^{cu}_{R}(R')H^u(R',\cP)+\mu^{cu}_{R}(R\setminus R')H^u(R\setminus R',\cP).
\end{equation}
Now observe that 
$$
H(R,\cP)=\sum_{P\in \cP}-\mu^{cu}_{R}(P)\log \mu^{cu}_{R}(P).
$$
For any fixed $P\in \cP$ we have 
$$
\mu^{cu}_{R}(P)=\mu^{cu}_{R}(R')\mu^{cu}_{R'}(P\cap R')+\mu^{cu}_{R}(R\setminus R')\mu^{cu}_{R\setminus R'}(P\cap (R\setminus R')).
$$
Then applying Jensen inequality to the function $\phi(x)=-x\log x$ we have 
\begin{equation}
\begin{aligned}
H(R,\cP)&\geq \sum_{P\in \cP}\mu^{cu}_{R}(R')\phi(\mu^{cu}_{R'}(P\cap R'))+\mu^{cu}_{R}(R\setminus R')\phi(\mu^{cu}_{R\setminus R'}(P\cap (R\setminus R')))\\
\label{eq.zeta}&=\mu^{cu}_{R}(R')H(R',\cP)+\mu^{cu}_{R}(R\setminus R')H(R\setminus R',\cP).
\end{aligned}
\end{equation}
Subtracting \eqref{eq.psi} from \eqref{eq.zeta} we get 
$$
\begin{aligned}
H^{tr}(R,\cP)&\geq \mu^{cu}_{R}(R')H^{tr}(R',\cP)+\mu^{cu}_{R}(R\setminus R')H^{tr}(R\setminus R',\cP)\\
&\geq\mu^{cu}_{R}(R')H^{tr}(R',\cP). \qquad\qquad\qquad\qquad\qquad\qquad\qquad\qquad\qquad \hfill {\qedhere} 
\end{aligned} $$
\end{proof}

\begin{corollary}\label{c.iterates.gap}
Consider a sequence of measurable partitions $(\cP_n)$ satisfying $\cP_n\prec \cP_{n+1}$.
Then for every $n\in\natural$,
$$
H^{tr}(R,\cP_n)=H^{tr}(R,\cP_0)+\int_{R} \sum_{j=0}^{n-1} \sum_{P_j\in\cP_j}\chi_{P_j}(x)H^{tr}(R\cap P_j,\cP_{j+1})d\mu^{cu}_{R}(x).
$$
\end{corollary}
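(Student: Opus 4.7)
The plan is a direct induction on $n$ using the chain-rule identity already established in Proposition~\ref{p.gap.prop}(iii). The base case $n=0$ is trivial since both sides reduce to $H^{tr}(R,\cP_0)$ (the sum over $j$ is empty). For the inductive step, I would assume the formula holds at level $n$ and apply Proposition~\ref{p.gap.prop}(iii) to the pair $\cP_n \prec \cP_{n+1}$, which gives
\[
H^{tr}(R,\cP_{n+1}) = H^{tr}(R,\cP_n) + \sum_{P_n\in \cP_n}\mu^{cu}_R(P_n)\, H^{tr}(R\cap P_n,\cP_{n+1}),
\]
and then substitute the inductive hypothesis for $H^{tr}(R,\cP_n)$.

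The only thing left is to rewrite the finite sums $\sum_{P_j\in \cP_j}\mu^{cu}_R(P_j)H^{tr}(R\cap P_j,\cP_{j+1})$ in the integral form appearing in the statement. Since $H^{tr}(R\cap P_j,\cP_{j+1})$ is a constant (independent of $x$) and $\mu^{cu}_R(P_j)=\int_R \chi_{P_j}(x)\, d\mu^{cu}_R(x)$, one immediately has
\[
\sum_{P_j\in \cP_j}\mu^{cu}_R(P_j)\, H^{tr}(R\cap P_j,\cP_{j+1}) = \int_R \sum_{P_j\in \cP_j}\chi_{P_j}(x)\, H^{tr}(R\cap P_j,\cP_{j+1})\, d\mu^{cu}_R(x),
\]
and summing over $j=0,\dots,n-1$ and exchanging the (finite) sums with the integral yields the stated expression.

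There is no real obstacle here: the content is entirely encoded in Proposition~\ref{p.gap.prop}(iii). The only minor point to be careful about is that the partitions $\cP_j$ are assumed to be measurable and each sum over atoms is a countable sum (or finite), so Fubini/Tonelli applies without issue to the exchange of sum and integral; this is automatic from non-negativity of the transverse entropy (Proposition~\ref{p.gap.prop}(i)).
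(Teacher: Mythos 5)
Your proof is correct and follows essentially the same route as the paper: both induct on $n$ using Proposition~\ref{p.gap.prop}(iii) and then rewrite the resulting sums over atoms in integral form via the identity $\mu^{cu}_R(P_j)=\int_R\chi_{P_j}\,d\mu^{cu}_R$. The paper simply compresses the induction and the final rewriting into a single ``Inductively we have \ldots\ so we can write this as the announced formula''; you have supplied the same steps with a bit more detail.
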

\begin{proof}
By Proposition~\ref{p.gap.prop} (iii)
$$
H^{tr}(R,\cP_n)=H^{tr}(R,\cP_{n-1})+\sum_{P_{n-1}\in \cP_{n-1}}\mu^{cu}_{R}(P_{n-1}) H^{tr} (R\cap P_{n-1},\cP_n).
$$
Inductively we have
$$
H^{tr}(R,\cP_n)=H^{tr}(R,\cP_{0})+\sum_{j=0}^{n-1} \sum_{P_j\in \cP_j}\mu^{cu}_{R}(P_j) H^{tr} (R\cap P_j,\cP_{j+1}),
$$
so we can write this as the announced formula.
\end{proof}

\subsection{Criterion for u-invariance}\label{ss.zerogap}
In the present setting, the center disintegration $\{\mu^c_x\}$ is u-invariant if and only if $\mu^{cu}$ is a product $\mu^u\times\mu^c$.

\begin{corollary}\label{c.criterion}
Let us consider a sequence of measurable partitions $(\cP^u_n)$ which generates $X^u$,
and let us define the partitions $\cP_n:=\cP_n^u\times X^c$ of $X$.

Then $\mu^{cu}$ is a product $\mu^u\times\mu^c$ if and only if
$H^{tr}(X^{cu},\cP_n)=0$ for each $n$.
\end{corollary}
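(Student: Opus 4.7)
The plan is to apply Proposition~\ref{p.gap.prop}(ii) to each partition $\cP_n$ and to leverage the generating property of $(\cP^u_n)$ via a monotone-class argument.

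\emph{Product $\Rightarrow$ vanishing transverse entropy.} If $\mu^{cu}=\mu^u\times\mu^c$ where $\mu^u,\mu^c$ denote the marginals, then the horizontal disintegration $\mu^u_x$ canonically identifies with $\mu^u$ for $\mu^{cu}$-a.e.\ $x$. For any atom $P^u\times X^c$ of $\cP_n$, both quantities in Proposition~\ref{p.gap.prop}(ii) reduce to $\mu^u(P^u)$, giving $H^{tr}(X^{cu},\cP_n)=0$.

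\emph{Vanishing transverse entropy $\Rightarrow$ product.} From $H^{tr}(X^{cu},\cP_n)=0$ for every $n$, Proposition~\ref{p.gap.prop}(ii) supplies, for each $n$, a $\mu^{cu}$-full measure set $Y_n$ on which
\begin{equation*}
\mu^u_x(\cP_n(x)\cap X^u(x))=\mu^{cu}(\cP_n(x)).
\end{equation*}
Set $Y=\bigcap_n Y_n$; it still has full measure and the equality holds there simultaneously for every atom of every $\cP_n$. The right-hand side depends only on the $u$-coordinate of $x$ and coincides with $\nu^u(\cP_n^u(x^u))$, where $\nu^u$ denotes the $u$-marginal of $\mu^{cu}$. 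Identifying each horizontal $X^u(x)$ with $X^u$, one concludes that for every $x\in Y$ the probability measures $\mu^u_x$ and $\nu^u$ agree on every atom of every $\cP^u_n$.

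To finish I would invoke a standard monotone-class argument: since $(\cP^u_n)$ generates the Borel $\sigma$-algebra of $X^u$, the agreement on all these atoms forces $\mu^u_x=\nu^u$ as Borel measures, for every $x\in Y$. Plugging this uniform disintegration into Fubini's formula then yields $\mu^{cu}(A^u\times A^c)=\nu^u(A^u)\mu^c(A^c)$ for every measurable rectangle, where $\mu^c$ is the $c$-marginal of $\mu^{cu}$; hence $\mu^{cu}=\nu^u\times\mu^c$. The only mild subtlety is this last passage from agreement on atoms of a generating sequence to agreement as Borel measures, which is purely measure-theoretic and presents no real obstacle.
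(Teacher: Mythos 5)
Your argument is essentially identical to the paper's: both directions reduce to Proposition~\ref{p.gap.prop}(ii), and the converse upgrades atom-wise agreement of $\mu^u_x$ with the $u$-marginal to equality of measures via the generating hypothesis, then applies Fubini. The only caveat concerns the step you describe as presenting ``no real obstacle'': the atoms of a generating sequence of finite partitions do not in general form a $\pi$-system, so a monotone-class argument applied directly to atom-wise agreement requires the $\cP^u_n$ to be nested. Without nestedness the statement genuinely fails: on $X^u=\{1,2,3,4\}$, $X^c=\{+,-\}$, take $\nu^c$ uniform, $\mu^u_{(\cdot,+)}$ uniform and $\mu^u_{(\cdot,-)}$ uniform on $\{1,4\}$, with $\cP^u_1=\{\{1,2\},\{3,4\}\}$ and $\cP^u_2=\{\{1,3\},\{2,4\}\}$; one checks that every atom of $\cP_1,\cP_2$ has $\mu^u_x$-measure equal to its $\mu^{cu}$-measure, so both transverse entropies vanish and the join generates, yet $\mu^{cu}$ is not a product. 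The nestedness is implicit in the word ``generates'' and is harmless in the application of Section~5.5, since one may replace $\cP^u_n$ by $\cP^u_1\vee\dots\vee\cP^u_n$ without losing the small-boundary property needed for Proposition~\ref{p.entropy.gap0}; the paper's own proof of the corollary is equally terse on this point, so your proposal matches its level of rigor.
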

\begin{proof}
By Rokhlin disintegration theorem we can write $\mu^{cu}=\int_{X^c} \mu^u_x d\mu^c(x)$, we can identify $X^u\times\{x^c\}$ with $X^u$, now by Proposition~\ref{p.gap.prop}(ii) $\mu^u_x(\cP_n^u(x))=\mu^{cu}(\cP_n(x))$ if and only if $H^{tr}(X^{cu},\cP_n)=0$. As $\cP_n^u$ generates the sigma algebra of $X^u$ then we also have $\mu^u_x(A)=\mu^{cu}(A\times X^c)$ for any measurable set $A\subset X^u$, so this implies $\mu^u_x$ is constant equal to $\mu^u(A):=\mu^{cu}(A\times X^c)$.
\end{proof}

\section{Partitions with small boundary}\label{s.partition.small.bound}
We here construct a special class of local partitions inside an unstable plaque of a partially hyperbolic diffeomorphism.
The construction is standard and goes back to~\cite{LS82}.
We consider a $C^1$ diffeomorphism $f$ which is partially hyperbolic, dynamically coherent, quasi-isometric in the center and an ergodic measure $\mu$.

\subsection{Definition of partitions with small boundary} Let us consider:
\begin{itemize}
\item[--] a point $x_0\in M$,
\item[--] an unstable ball $D^u:=B^u_\delta(x_0)$ of radius $\delta>0$ inside $\cW^u(x_0)$,
\item[--] a finite measurable partition $\cP^u$ of $D^u$ such that Lebesgue a.e. point $x\in D^u$
belongs to the interior of $\cP^u(x)$ (relative to $\cW^u(x_0)$).
\end{itemize}
Let $ \partial \cP^u$ be the boundary of $\cP^u$ inside $\cW^u(x_0)$, and for $L>0$ let
$$\partial^{cs}_L\cP^u=\cup_{x\in \partial \cP^u}B^{cs}_L(x).$$
We also fix $\lambda\in(0,1)$ such that $\lambda>\limsup_{n\to +\infty} \|Df^{-n}|_{E^u}\|^{1/n}$.

\begin{definition}\label{d.boundary}
We say that $\partial^{cs}_L\cP^u$ has \emph{small measure} (or that $\cP^u$ has \emph{small boundary} when $L$ is fixed) if there exists
$\lambda'\in(\lambda,1)$ and $C>0$ such that
$$\mu\big\{x\in M;d^u(\partial^{cs}_L \cP^u,x)<\lambda^n\big\}\leq C\lambda'^n,\quad \forall n\geq 0.$$
When $\cP^u=\{D^u\}$ we say that $D^u$ has small boundary.
\end{definition}

\subsection{Existence of partitions with small boundary}
The partitions are obtained thanks to the next statement.
\begin{proposition}\label{p.cover}
For every $L,\delta_0>0$ there exists $\delta$ arbitrarily close to $\delta_0$
such that $D^u:=B^u_\delta(x_0)$ has small boundary.
Moreover, for any $\varepsilon>0$, there exists a partition $\cP^u$ of $D^u$ into sets with diameter smaller than $\varepsilon$ which has small boundary. 
\end{proposition}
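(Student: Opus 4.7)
The plan is to follow the standard transversality/Fubini construction going back to Ledrappier-Strelcyn \cite{LS82}. First I would produce a single good radius $\delta$ close to $\delta_0$ such that $D^u_\delta:=B^u_\delta(x_0)$ has small boundary, and then build the $\varepsilon$-fine partition by patching together finitely many such balls via a standard disjointification.

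For the single-ball statement, fix an interval $I:=(\delta_0-\eta,\delta_0+\eta)$ and, for each $x\in M$ and $r>0$, set
\[
S_x^r:=\{\delta\in I:\ d^u(x,\partial^{cs}_L D^u_\delta)<r\}.
\]
The core transversality claim is a uniform estimate $\operatorname{Leb}(S_x^r)\le C\,r^\alpha$ for some $\alpha>0$ independent of $x$. Using Proposition~\ref{p.cs-u.product}, any $x$ in a $cs$-tubular neighborhood of $\cW^u(x_0)$ projects by $cs$-holonomy to a uniformly bounded finite set $\{y_j(x)\}\subset \cW^u(x_0)$, and the condition $\delta\in S_x^r$ forces $\delta$ to lie close to some $d^u(x_0,y_j(z))$ for $z\in B^u_r(x)$; continuity of the $cs$-holonomy between unstable transversals then yields the bound. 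Integrating by Fubini gives
\[
\int_I \mu\bigl(\{x:d^u(x,\partial^{cs}_L D^u_\delta)<\lambda^n\}\bigr)\,d\delta\le C\,\lambda^{\alpha n}.
\]
Choosing $\lambda'\in(\max(\lambda,\lambda^\alpha),1)$, the geometric series $\sum_n\lambda'^{-n}\lambda^{\alpha n}=\sum_n(\lambda^\alpha/\lambda')^n$ converges; so Fubini once more yields, for Lebesgue-a.e. $\delta\in I$, the finiteness of $\sum_n\lambda'^{-n}\mu(\{d^u(\cdot,\partial^{cs}_L D^u_\delta)<\lambda^n\})$, hence $\mu(\{\cdot\})\le C(\delta)\lambda'^n$. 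Such $\delta$ are dense in $I$, so we can pick one arbitrarily close to $\delta_0$.

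For the partition statement, fix $\varepsilon>0$, cover $D^u$ by finitely many unstable balls $B^u_{r_i}(x_i)$ with radii $r_i\in(\varepsilon/4,\varepsilon/3)$ chosen (via the previous step applied around each $x_i$) so that each $B^u_{r_i}(x_i)$ has small boundary, and define
\[
P_i:=\bigl(D^u\cap B^u_{r_i}(x_i)\bigr)\setminus\bigcup_{j<i}B^u_{r_j}(x_j).
\]
This is a finite measurable partition of $D^u$ with $\operatorname{diam}(P_i)<\varepsilon$, and $\partial\cP^u\subset\partial D^u\cup\bigcup_i\partial B^u_{r_i}(x_i)$, whose $cs$-thickening is a finite union of sets satisfying the small-measure estimate with a common $\lambda'$, hence is itself of small measure (after enlarging $C$).

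The main obstacle is the uniform transversality estimate. In the $C^1$ partially hyperbolic setting, the $cs$-holonomy between unstable transversals is only continuous a priori, so extracting a power-like modulus $r^\alpha$ uniformly in $x$ requires a careful local analysis exploiting the finiteness given by Proposition~\ref{p.cs-u.product} and compactness of $M$. A weaker modulus $\omega(r)\to 0$ would still suffice, provided $\lambda'$ can be chosen so that $\sum_n\lambda'^{-n}\omega(\lambda^n)<\infty$; one trades regularity of the holonomy against how close to $1$ one takes $\lambda'$.
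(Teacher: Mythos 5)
Your plan follows the same route as the paper: reduce the small-boundary estimate to a Fubini/Borel--Cantelli argument on the radius parameter, driven by a uniform H\"older modulus for the cs-holonomy between unstable transversals. The paper organizes this a bit differently --- it introduces the one-dimensional transverse measures $\nu_i([a,b))=\mu(S(x_i,b)\setminus S(x_i,a))$ on the interval of radii, applies Lemma~\ref{l.exponential} (which is itself the Fubini/geometric-series step your double Fubini reproduces), and builds the fine partition by intersecting finitely many such balls with a common good radius $\varepsilon'$ rather than disjointifying --- but these are cosmetic differences, not a different method.

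The one substantive issue is in your closing remarks about the ``obstacle.'' You are right that the crucial input is a power-like modulus $\omega(r)=O(r^\alpha)$ for the cs-holonomy, and that $C^1$ regularity alone does not give it. But your hedge --- that a weaker modulus $\omega(r)\to 0$ would still suffice ``provided $\lambda'$ can be chosen so that $\sum_n\lambda'^{-n}\omega(\lambda^n)<\infty$'' --- is vacuous: since $\lambda'<1$, the factor $\lambda'^{-n}$ grows geometrically, so convergence of that series forces $\omega(\lambda^n)$ to decay geometrically in $n$, i.e.\ $\omega(r)=O(r^\alpha)$ already. There is no trade-off against $\lambda'$; a power-like modulus is genuinely needed, and mere continuity cannot be bootstrapped. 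What closes the gap is the paper's Lemma~\ref{l.holder-holonomy}: the cs-holonomy between nearby unstable discs \emph{is} uniformly $(\alpha,C)$-H\"older, and this is proved not from $C^1$ regularity but from the quasi-isometric property in the center, which bounds the cs-diameter of forward iterates of the holonomy arcs and gives the two-sided exponential estimates \eqref{e.lower}--\eqref{e.upper}. If you replace your hand-wave ``continuity of the $cs$-holonomy then yields the bound'' by an appeal to that lemma, the argument is complete and matches the paper's.
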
 
The proof requires two lemmas. The first is proved as~\cite[Lemma A.1]{Jiagang-entropy-along-expanding}.
\begin{lemma}\label{l.exponential}
Let $\nu$ be a finite measure supported on an interval $[0,R]$. Then for any $0<\lambda''<\lambda'<1$, there is a full Lebesgue measure subset $I\subset (0,R)$ with the following property. For every $t\in I$, there is $C_t>0$ such that:
$$
\nu([t-{\lambda''}^n,t+{\lambda''}^n])\leq C_t\lambda'^n, \qquad \forall n\geq 0
$$ 
\end{lemma}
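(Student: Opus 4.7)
\medskip

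\noindent\textbf{Proof plan for Lemma~\ref{l.exponential}.}
The natural approach is a Borel--Cantelli argument on the sets of ``bad'' parameters for which the conclusion fails at scale $n$. Define, for each integer $n\geq 0$,
$$
B_n:=\{t\in[0,R]\colon \nu([t-{\lambda''}^n,t+{\lambda''}^n])>{\lambda'}^n\}.
$$
My goal is to show that $\sum_n |B_n|<\infty$, where $|\cdot|$ denotes Lebesgue measure on $\real$. If this holds, then by Borel--Cantelli almost every $t\in(0,R)$ belongs only to finitely many $B_n$, so there exists $n_0=n_0(t)$ with $\nu([t-{\lambda''}^n,t+{\lambda''}^n])\leq {\lambda'}^n$ for all $n\geq n_0$. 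Taking $C_t:=\max\big(1,\nu([0,R]){\lambda'}^{-n_0}\big)$ handles the finitely many indices $n<n_0$ via the crude bound $\nu([t-{\lambda''}^n,t+{\lambda''}^n])\leq \nu([0,R])$, so the desired inequality holds for every $n\geq 0$.

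The key step is thus estimating $|B_n|$. For each $t\in B_n$ the interval $I_t:=[t-{\lambda''}^n,t+{\lambda''}^n]$ has $\nu$-mass greater than ${\lambda'}^n$. I will apply a one-dimensional Vitali-type covering argument to the family $\{I_t\}_{t\in B_n}$: since all the intervals have the same length $2{\lambda''}^n$, a standard greedy selection extracts a countable disjoint subfamily $\{I_{t_j}\}_j$ whose triples $3I_{t_j}$ still cover $B_n$. Because the $I_{t_j}$ are pairwise disjoint and each carries $\nu$-mass at least ${\lambda'}^n$, the total number of selected intervals is bounded by $\nu([0,R])/{\lambda'}^n$. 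Therefore
$$
|B_n|\leq \sum_{j}|3I_{t_j}|\leq 6{\lambda''}^n\cdot \frac{\nu([0,R])}{{\lambda'}^n}= 6\,\nu([0,R])\left(\frac{\lambda''}{\lambda'}\right)^{n}.
$$
Since $\lambda''<\lambda'$, this is a summable geometric series, which is exactly what I need.

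The only point requiring care is the Vitali extraction, since $B_n$ is a priori only a Borel set and not compact. However, as all intervals in the cover have the \emph{same} length, one can either use a purely elementary greedy argument (well-order the $t$'s and pick them in order, discarding any $I_t$ that meets an already chosen one; the disjoint collection so obtained has the tripling property) or invoke the classical $5r$-covering / Vitali lemma on $\real$. I expect no serious obstacle: the argument is essentially the standard proof that exponentially small balls have controlled measure at Lebesgue-almost every point, specialized to a finite measure on the line, and the two different exponents $\lambda''<\lambda'$ are precisely what provides the summability that drives Borel--Cantelli.
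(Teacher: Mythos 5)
Your proof is correct. The Borel--Cantelli scheme is the right one, the choice of $C_t$ handles the finitely many small indices, and the Vitali-type selection with same-length intervals is elementary and terminates after at most $\nu([0,R])/{\lambda'}^n$ steps because the disjoint chosen intervals each carry $\nu$-mass at least ${\lambda'}^n$; no compactness of $B_n$ is needed. This gives $|B_n|\leq 6\nu([0,R])(\lambda''/\lambda')^n$, which is summable since $\lambda''<\lambda'$.

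The paper does not give its own argument but defers to the cited reference. The standard proof there (and the one I would expect) replaces the covering step by a direct Fubini/Chebyshev computation: since
$$
\int_0^R \nu\bigl([t-{\lambda''}^n,t+{\lambda''}^n]\bigr)\,dt
=\int\int \mathbf{1}_{\{|t-s|\leq {\lambda''}^n\}}\,d\nu(s)\,dt
\leq 2{\lambda''}^n\,\nu([0,R]),
$$
Markov's inequality immediately yields $|B_n|\leq 2\nu([0,R])({\lambda''}/{\lambda'})^n$ and the rest is unchanged. The two approaches are equivalent in spirit (both convert a pointwise density estimate into a Lebesgue bound on the bad set), but the Fubini route is shorter and gives a slightly better constant, whereas yours is the generic maximal-function argument that would also work in higher dimensions or with balls of varying radii. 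Either is acceptable here.
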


The second one asserts that cs-holonomies are H\"older continuous.
The result is classical in the case of s-holonomies. In the case of cs-holonomies,
we use the fact that $f$ is quasi-isometric in the center.

\begin{lemma}\label{l.holder-holonomy}
For any $L>0$, there exist $r,\alpha,C>0$ such that
if $D,D'$ are two subsets of unstable leaves with diameter smaller than $r$
and if $\fh\colon D\to D'$ is a cs-holonomy satisfying $d^{cs}(y,\fh(y))<L$
for each $y\in D$, then $\fh$ is $(\alpha,C)$-H\"older continuous, ie. for any $y_1,y_2\in D$ it satisfies:
$$d^u(\fh(y_1),\fh(y_2))\leq Cd^u(y_1,y_2)^\alpha.$$
\end{lemma}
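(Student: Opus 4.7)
The plan is to adapt the classical forward-iteration (graph-transform) argument for H\"older continuity of stable holonomies; the key new ingredient, needed because we deal with cs-holonomies rather than pure s-holonomies, is the quasi-isometric property on center leaves, which is used to control cs-displacements uniformly along forward orbits.

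\emph{Step 1 (uniform cs-displacement).} First I will establish a constant $L'=L'(L)>0$ such that $d^{cs}(f^n(y), f^n(\fh(y))) \leq L'$ for every $y\in D$ and every $n\geq 0$. Using the local c$\times$s product structure of cs-leaves at some fixed scale $\rho>0$, a cs-path from $y$ to $\fh(y)$ of length $<L$ is approximated by a ``staircase'' of $N\leq L/\rho + 1$ alternating center and stable segments, with total center length $\leq CL$ and total stable length $\leq CL$. Under $f^n$, each stable segment contracts (rate $\leq\nu^n\leq 1$), while by quasi-isometry each center segment has endpoints at $d^c$-distance at most $K_0\cdot(\text{length})+c_0$; summing over the $N$ center segments bounds the total cs-length by $L':=K_0 C L + N c_0 + CL$, uniformly in $n$.

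\emph{Step 2 (forward iteration to a uniform scale).} Choose a reference radius $r_0>0$ such that, for any $p,p'\in M$ with $d^{cs}(p,p')\leq L'$, the cs-holonomy from $B^u_{r_0}(p)$ into $\cW^u(p')$ is well defined with image of diameter at most some $M=M(L',r_0)$. This is supplied by the compactness of $M$ together with the continuity of the three invariant foliations. Given $y_1,y_2\in D$ with $\delta:=d^u(y_1,y_2)$ smaller than a fixed $r$, the uniform unstable expansion (rate $\geq\hat\nu^{-1}>1$) yields a maximal integer $N$ with $d^u(f^N(y_1), f^N(y_2))\leq r_0$, satisfying $N\asymp \log(r_0/\delta)/\log(\hat\nu^{-1})$.

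\emph{Step 3 (pullback and H\"older conclusion).} At time $N$, Step~1 gives $d^{cs}(f^N(y_i), f^N(\fh(y_i)))\leq L'$, so by Step~2 we have $d^u(f^N(\fh(y_1)), f^N(\fh(y_2)))\leq M$. Pulling back by $f^{-N}$ and using $\|Df^{-n}|_{E^u}\|\leq C\lambda^n$ for some $\lambda\in(0,1)$ slightly larger than the asymptotic unstable backward contraction rate, we obtain
$$d^u(\fh(y_1), \fh(y_2))\leq C M \lambda^N \leq C'\delta^{\alpha},$$
with $\alpha = \log(1/\lambda)/\log(\hat\nu^{-1})>0$, yielding the announced $(\alpha, C')$-H\"older estimate.

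\emph{Main obstacle.} The nontrivial step is Step~1: the additive constant $c_0$ in the quasi-isometric bound is incurred once per center segment of the staircase, so one must check that the number $N$ of segments depends only on $L$ and the local product scale $\rho$ and not on $n$. Without quasi-isometry in the center the center lengths could grow exponentially under $f^n$, no uniform $L'$ would exist, and the uniform continuity input needed for Step~2 would fail; thus the quasi-isometric hypothesis is precisely what makes the whole argument go through.
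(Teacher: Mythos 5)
Your proof is correct and follows essentially the same route as the paper's: iterate forward to a fixed unstable scale, using the quasi-isometric property to show that the cs-displacement of the iterated holonomy stays uniformly bounded, then use compactness at that scale and pull back by $f^{-N}$. Your Step~1 (the staircase argument giving $d^{cs}(f^n(y),f^n(\fh(y)))\le L'$ uniformly in $n$) is an explicit version of the detail the paper compresses into ``the diameter of the arcs $f^N\circ\gamma_i$ is bounded by $K_0L+c_0$, independently of $N$''; the only small slip is that the lower bound on $N$ in terms of $\delta$ uses the maximal expansion rate of $Df|_{E^u}$, not $\hat\nu^{-1}$, so the exponent should read $\alpha=\log(1/\lambda)/\log\|Df|_{E^u}\|_\infty$ (still positive, so the conclusion stands).
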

\begin{proof}
Recall that we have fixed a small number $\varepsilon_0$ which measures the size of
the local manifolds.

Let us consider two close points $y_1,y_2$ in a same leaf of $\cW^u$
and their image by a cs-holonomy $\fh$ satisfying
$d^{cs}(\fh(y_i),y_i)< L$. There exists two arcs $\gamma_i\colon[0,1]\to M$
connecting $y_i$ to $\fh(y_i)$, with length smaller than $L$, contained in cs-leaves.
These arcs are arbitrarily close if $y_1,y_2$ are close enough, i.e. if $r$ is chosen small.
One can thus require that $\gamma_1(t),\gamma_2(t)$ are contained in a same local unstable leaf for each $t\in [0,1]$.

By forward iteration, the arcs separate in the unstable direction:
there exists a first time $N$ such that $d^u(f^N(\gamma_1(t_0)),f^N(\gamma_2(t_0)))\geq \varepsilon_0$ for some $t_0\in [0,1]$.
Note that there exists $C_1>0$ uniform such that
\begin{equation}\label{e.lower}
d^u(\fh(y_1),\fh(y_2)) e^{C_1 N}\leq \varepsilon_0.
\end{equation}
Since $f$ is quasi-isometric in the center, the diameter of the arcs $f^N\circ \gamma_i$
is bounded by $K_0L+c_0$, independently from $N$.
In particular there exists a constant $\eta>0$ (which does not depend on $y_1,y_2$, nor $N$)
such that $d^u(f^N(\gamma_1(t)),f^N(\gamma_2(t)))\geq \eta$ for all $t\in [0,1]$.
One deduces that there exists $C_2>0$ uniform such that
\begin{equation}\label{e.upper}
d^u(y_1,y_2)e^{C_2 N}\geq \eta.
\end{equation}
Combining estimates~\eqref{e.upper} and~\eqref{e.lower}, one gets the announced inequality with
$\alpha=C_1/C_2$ and $C=\varepsilon_0.\eta^{-\alpha}$.
\end{proof}

\begin{proof}[Proof of Proposition~\ref{p.cover}]
Let $\alpha>0$ be given by Lemma~\ref{l.holder-holonomy}.
We fix arbitrarily $\lambda''<\lambda'$ in $(\lambda^\alpha,1)$.

Let us fix $\eta>0$ small and introduce the disc $D^u_0=B^u_{\delta_0+\eta}(x_0)$ which will contain our constructions.
By Proposition~\ref{p.cs-u.product}, if $\gamma>0$ is small,
then the set $\cN^{cs\times u}:=\cup_{x\in B^{cs}_\gamma(x_0)}\fh^{cs}_{x_0,x}(D^u_0)$ has a cs$\times$u-product structure:
it is the image of $D^u\times B^{cs}_\gamma(x_0)$ by the homeomorphism
$\Psi:(x,y)\mapsto \fh^{cs}_{x_0,y}(x)$.
\medskip

\paragraph{\emph{First case: $L>0$ is small so that $\Lambda:=\cup_{x\in D^u}B^{cs}_L(x)$ is contained in $\cN^{cs\times u}$.}}
Each partition $\cP^u$ of $D^u_0$ can be extended to a partition of $\cN^{cs\times u}$
as $\cP=\Psi(\cP^u\times B^{cs}_{\gamma}(x_0))$.
We have $\partial\cP\supset \partial^{cs}_L\cP^u$, so it is enough to prove that $\partial\cP$ has small measure.

Given any subset $A\in D^u_0$, we call \emph{strip of base} $A$ its cs-saturation inside $\cN^{cs\times u}$ of $A$,
i.e. the set $\Psi(A\times B^{cs}_{\gamma}(x_0))$. 

Let $\{x_1,\dots, x_m\}$ be a $\frac{\varepsilon}{4}$-dense subset of $D^u_0$ and $S(x_i,b)$ be
the strip of base $B^u_b(x_i)$ for $b>0$.
For each $x_i$ we define a finite measure $\nu_i$ on $(0,\frac \varepsilon 2)$ by:
$$\nu_i([a,b))=\mu(S(x_i,b)\setminus S(x_i,a)).$$
By Lemma~\ref{l.exponential}, for Lebesgue a.e. $\varepsilon_i\in (\frac{\varepsilon}{4},\frac \varepsilon 2)$
there is $C_i>0$ satisfying
$$\mu(S(x_i,\varepsilon_i+{\lambda''}^n)\setminus S(x_i,\varepsilon_i-{\lambda''}^n))<C_i\lambda'^n, \quad \forall n\geq 0.$$
Let $\partial^{cs}_L B^u_{\varepsilon_i}(x_i):=\cup_{y\in \partial B^u_{\varepsilon_i}(x_i)}B^{cs}_L(y)$.
By $\alpha$-H\"older continuity of the cs-holonomy (see Lemma~\ref{l.holder-holonomy}) and since $\lambda^\alpha<\lambda''$,
one gets for $n$ large enough:
$$
\{x\in M\colon\; d(\partial^{cs}_L B^u_{\varepsilon_i}(x_i),x)<\lambda^n\}\subset S(x_i,\varepsilon_i-{\lambda''}^n)\setminus S(x_i,\varepsilon_i+{\lambda''}^n),
$$
So $\partial^{cs}_L B^u_{\varepsilon_i}(x_i)$ has small measure. As there are finitely many $i$ we can take the same $\varepsilon':=\varepsilon_i$ for every $i=1,\dots,m$,  smaller than $\frac \varepsilon 2$.

Analogously one can define a measure $\nu_0$ on $(\delta_0-\eta,\delta_0+\eta)$ by setting $\nu_0([a,b))=\mu(S(x_0,b)\setminus S(x_0,a))$. We can thus get $\delta$ with $|\delta-\delta_0|<\eta$
such that $\partial_L B^u_{\delta}(x_0)$ has small measure.

Let us take $D^u:= B^u_\delta(x_0)$ and let $\cP^u=\cP^u(x_1,\dots,x_m,\varepsilon')$ be the partition of $D^u$ generated by intersecting the sets $B^u_{\varepsilon'}(x_i)$, $i=1,\dots,m$.
The proof of the proposition is done in this case.
\bigskip

\paragraph{\emph{General case: $L$ is arbitrary.}}
We cover $M$ by finitely many sets with a cs$\times$u-product structure.
We choose points $z_j$ and number $\delta_j>0$
with $0\leq j\leq \ell$ and introduce sets of the form
$\cN^{cs\times u}_j=\Psi_j(D^u_j\times B^{cs}_\gamma(z_j))$ which cover $M$,
where $D_j=B^u_{\delta_j}(z_j)$.
We also take $D^u_0=B^u_{\delta_0+\eta}(x_0)$ as before.

For each $x\in D^u$ and $0<j\leq \ell$ we consider the
geodesic arcs $\psi:[0,1]\to B^{cs}_{2L}(x)$ with length less than $2L$,
that connect $x$ to some point $y\in D^u_j$. There are finitely many such arcs.
Each of them defines some cs-holonomy
$\fh$ from a neighborhood $\Sigma\subset D^u_0$ of $x$
to a neighborhood of $y$ in $D^u_j$.
One has thus associated to the point $x$ a finite number of holonomy maps
$\fh^k_x$, and one can assume that they are defined on the same domain $\Sigma_x\subset D^u_0$.
Each holonomy $\fh^k_x$ takes its values in a disc $D^u_{j(k)}$ and by construction:
\begin{equation}\label{e.decompose}
B_L^{cs}(y)\subset \bigcup_k \Psi_{j(k)}(\{\fh^k_x(y)\}\times B^{cs}_\gamma(z_{j(k)})), \quad \text{ for each } y\in \Sigma_x.
\end{equation}

We then consider some $\tfrac{\varepsilon}4$-dense subset $\{x_1,\dots,x_m\}$ of $D^u_0$
whose associated domains $\Sigma_1,\dots,\Sigma_m$ cover $D^u_0$. To each point $x_i$
are also associated finitely many cs-holonomies $\fh_i^k\colon \Sigma_i\to D^u_{j(k)}$
and denote $y^k_i:=\fh^k_i(x_i)$.
To each of them, one considers for $b>0$ the strip in $\cN_{j(k)}^{cs\times u}$ defined by:
$$S(y_i^k,b)=\Psi_{j(k)}(\fh^k_i(B_b^u(x_i))\times B^{cs}_\gamma(z_{j(k)})),$$
and the measure $\nu_i^k$ on $(0,\tfrac \varepsilon 2)$ by
$$\nu_i^k([a,b))=\mu(S(x^k_i,b)\setminus S(x^k_i,a)).$$

\begin{figure}[h]
\centering
\includegraphics[scale= 0.5]{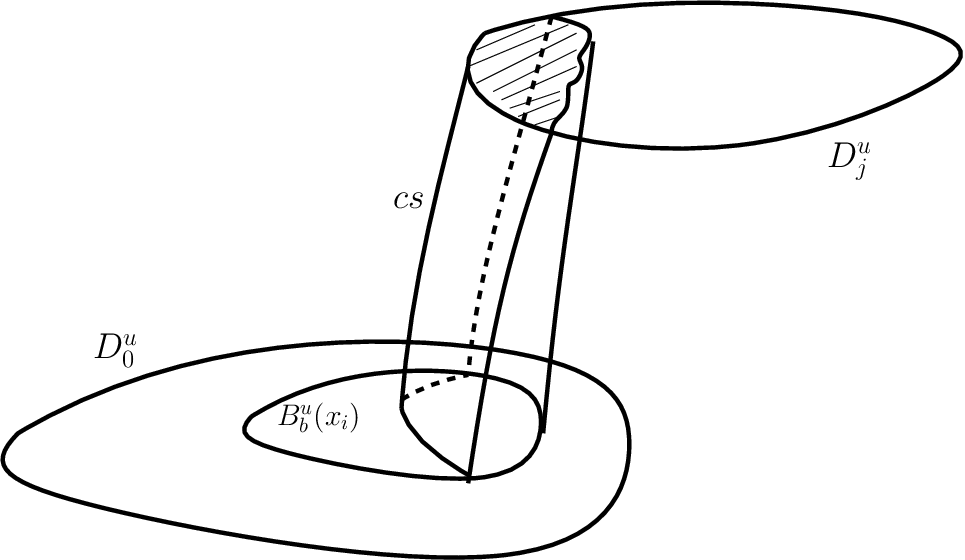}
\put(-150,130){\tiny $\fh^k_i(B^u_b(x_i))$}
\caption{Cs-holonomy $\fh^k_i$ between $D^u_0$ and  $D^u_j$.}
\label{fig.Bjim}
\end{figure}

As before shows that there exists a total Lebesgue measure subset of $\varepsilon'\in (\tfrac \varepsilon 4,\tfrac \varepsilon 2)$
such that each set
$\Psi_{j(k)}(\fh^k_i(\partial B^u_{\varepsilon'}(x_i))\times B^{cs}_\gamma(z_{j(k)}))$ has small measure.
Analogously there exists $\delta$ with $|\delta-\delta_0|<\eta$ such that each set
$\Psi_{j(k)}(\fh^k_i(\partial B^u_\delta(x_0))\times B^{cs}_\gamma(z_{j(k)}))$ has small measure.

As before we take $D^u:= B^u_\delta(x_0)$ and let $\cP^u=\cP^u(x_1,\dots,x_m,\varepsilon')$ be the partition of $D^u$ generated by intersecting the sets $B^u_{\varepsilon'}(x_i)$, $i=1,\dots,m$. Its atoms have diameter smaller than $\varepsilon$.
Moreover from~\eqref{e.decompose}, we get
$$
\partial^{cs}_L \cP^u(x_1,\dots,x_k,\varepsilon')\subset \bigcup_{i,k} \Psi_{j(k)}(\fh^k_i(\partial B^u_{\varepsilon'}(x_i))\times B^{cs}_\gamma(z_{j(k)}))
$$
$$
\partial^{cs}_L D^u\subset \bigcup_{i,k} \Psi_{j(k)}(\fh^k_i(\partial D^u)\times B^{cs}_\gamma(z_{j(k)}))
$$
hence $D^u$ and $\partial P^u$ have small boundary,
concluding the proof of Proposition~\ref{p.cover}.
\end{proof}

\subsection{Size of local unstable manifolds}
The small boundary property implies that for a large set of point $x$, many backward iterates of a
local unstable manifold $B^u_\rho(x)$ do not intersect $\partial^{cs}_L\cP$.

\begin{proposition}\label{p.rho}
Let $L>0$ and let $\cP^u$ be a partition of $D^u$ with small boundary. Then, for each $\beta>0$ there exists $\rho=\rho(\beta)>0$ such that the set
$$
M^n_\beta:=\bigg\{x\in M,\; \bigcup_{0\leq j\leq n}f^j(\partial^{cs}_L\mathcal{P}^u) \cap B^u_\rho(f^n(x))=\emptyset \bigg\}
$$
satisfies $\mu(M\setminus M^n_\beta)<\beta$ for every $n\geq 0$.
\end{proposition}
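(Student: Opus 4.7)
The plan is to express $M\setminus M^n_\beta$ as a union of preimages of unstable tubular neighborhoods of $\partial^{cs}_L\cP^u$ of geometrically decaying radius, and then bound its $\mu$-measure by combining the $f$-invariance of $\mu$ with the small boundary hypothesis.

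First I would unwind the definition: a point $x\in M\setminus M^n_\beta$ is one for which there exist $0\le j\le n$ and $z\in \partial^{cs}_L\cP^u$ with $f^j(z)\in B^u_\rho(f^n(x))$; in particular $f^j(z)$ and $f^n(x)$ lie in the same local unstable leaf at $d^u$-distance $<\rho$. The choice of $\lambda$ provides $C_1>0$ with $\|Df^{-m}|_{E^u}\|\le C_1\lambda^m$ for all $m\ge 0$, so applying $f^{-(n-j)}$ and using the backward contraction of $f$ along $\cW^u$ yields
\[ d^u(f^j(x),z)\ \le\ C_1\,\lambda^{n-j}\,\rho, \]
provided $\rho$ is smaller than the injectivity radius along unstable leaves. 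Setting
\[ N_m\ :=\ \{y\in M\ :\ d^u(y,\partial^{cs}_L\cP^u)< C_1\lambda^m\rho\}, \]
this gives the inclusion $M\setminus M^n_\beta\subset\bigcup_{j=0}^n f^{-j}(N_{n-j})$.

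Next I would take $\rho:=\lambda^{m_0}$ for a large integer $m_0$ to be chosen, so that $C_1\lambda^m\rho\le \lambda^{m+m_0-k}$ for a fixed integer $k\ge 0$ depending only on $C_1$ and $\lambda$. Applying Definition~\ref{d.boundary} at exponent $m+m_0-k$ produces constants $\lambda'\in(\lambda,1)$ and $C_0>0$ (both independent of $n$) such that $\mu(N_m)\le C_0\,{\lambda'}^{m+m_0}$ for every $m\ge 0$. Using the $f$-invariance of $\mu$,
\[ \mu(M\setminus M^n_\beta)\ \le\ \sum_{j=0}^n \mu\bigl(f^{-j}(N_{n-j})\bigr)\ =\ \sum_{m=0}^n \mu(N_m)\ \le\ \frac{C_0\,{\lambda'}^{m_0}}{1-\lambda'}. \]
This bound is uniform in $n$ and tends to $0$ as $m_0\to\infty$; choosing $m_0$ large enough (equivalently $\rho$ small enough) makes it smaller than $\beta$, proving the proposition.

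The argument is essentially a Borel--Cantelli style estimate and I do not anticipate a real obstacle; the key structural input is that the contraction factor $\lambda$ appearing in Definition~\ref{d.boundary} matches the upper bound on the backward expansion rate along $E^u$, which is precisely how the hypotheses have been arranged so that the two ingredients combine cleanly.
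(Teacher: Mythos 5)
Your proof is correct and takes essentially the same approach as the paper: decompose $M\setminus M^n_\beta$ into $\mu$-preimages of shrinking unstable neighborhoods of $\partial^{cs}_L\cP^u$, then combine $f$-invariance with Definition~\ref{d.boundary}. The only difference is in the bookkeeping: the paper splits the sum into a finite ``near'' part (with $j\le n_0$, controlled by taking $\rho$ small enough so that each term is below $\beta/(2n_0)$, using that $\mu(\partial^{cs}_L\cP^u)=0$) and a geometric tail for $j\ge n_0$, whereas you absorb the iterate count directly into the exponent of $\rho=\lambda^{m_0}$ and obtain a single geometric sum $\sum_m C_0\lambda'^{m+m_0}$ that is made small by increasing $m_0$; this is slightly more streamlined and avoids the auxiliary threshold $n_0$. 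One small caution: your intermediate estimate ``$d^u(f^j(x),z)\le C_1\lambda^{n-j}\rho$ after applying $f^{-(n-j)}$'' only makes sense if you have implicitly reindexed $j\leftrightarrow n-j$ in the definition of $M^n_\beta$ (which is legitimate since the union $\bigcup_{0\le j\le n}f^j(\partial^{cs}_L\cP^u)$ is invariant under that substitution); the stated inclusion $M\setminus M^n_\beta\subset\bigcup_{j=0}^n f^{-j}(N_{n-j})$ is correct either way, but it is worth making the reindexing explicit to avoid confusion.
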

\begin{proof}
The definition of small boundary fixes $C>0$ and $\lambda'\in (\lambda,1)$.
Given $\beta>0$, 
we choose $n_0$ such that $C\sum_{j=n_0}^\infty \lambda'^j<\frac{\beta}{3}$
and $\rho>0$ small such that
$$\mu \{y| f^{-j} B^u_\rho(y) \cap \partial^{cs}_L\mathcal{P}^u\neq \emptyset\}<\tfrac{\beta}{2n_0}, \qquad \forall 0\leq j\leq n_0$$
$$\text{and}\qquad f^{-j}B^u_\rho(y)\subset B^u_{{\lambda}^j}(f^{-j}(y))
\qquad \forall 0\leq j, \forall y\in M.$$
For any $n\geq 0$, the complement of $M^n_\beta$ decomposes in two sets: the first
$$\bigcup_{j=0}^{n_0} \{x| f^{-j} B^u_\rho(f^n(x)) \cap \partial^{cs}_L\mathcal{P}^u\neq \emptyset\}$$
has measure smaller than $\beta/3$ by our choice of $\rho$; the second
$$\bigcup_{j=n_0}^{n} \{x| f^{-j} B^u_\rho(f^n(x)) \cap \partial^{cs}_L\mathcal{P}^u\neq \emptyset\}
\subset \bigcup_{j=n_0}^{n} \{x| B^u_{{\lambda}^j}(f^{n-j}(x)) \cap \partial^{cs}_L\mathcal{P}^u\neq \emptyset\}$$
is contained in
$\bigcup_{j=n_0}^{n} \{x| d^u(\partial^{cs}_L\mathcal{P}^u,f^{n-j}(x))<{\lambda}^j\}$
which has $\mu$-measure less than $C\sum_{j=n_0}^\infty \lambda'^j<\frac{\beta}{3}$ by
definition~\ref{d.boundary}.
\end{proof}

\begin{remark}\label{r.iterate}
The previous proposition remains true if we replace $f$ by some iterate $f^m$, since
the set $M^n_\beta(f^m)$ for $f^m$ contains the set $M^{nm}_\beta$ for $f$.\end{remark}


\section{transverse entropy of a diffeomorphism}\label{s.gaps.entropy}
We now prove the Main Theorem. Intermediate steps are stated in section~\ref{ss.integrated-entropy}.
Note that the proof will use both inequalities in the definition of quasi-isometric center. We will iterate
center-unstable plaques forwardly, and the lower bound is used in order to guarantee that the plaque
does not collapse under iterations. But the upper bound is also used, so that the plaque keep bounded center
geometry and can be compared to a reference center-unstable plaque, even after a large iterate.

\subsection{Preliminary constructions}
\subsubsection{Initial setting} 
In the whole section, we consider:
\begin{itemize}
\item[--] a partially hyperbolic diffeomorphism $f$ quasi-isometric in the center:
there is $K>1$ such that for $l>0$ large enough, $x\in M$, and $n\geq 0$,
\begin{equation}\label{e.quasi-restated}
B^c_{l/K}(f^n(x))\subset f^n(B^c_l(x))\subset B^c_{Kl}(f^n(x)),
\end{equation}
\item[--] an ergodic probability $\mu$.
\end{itemize}
We consider three large center scales $\ell_1,\ell_2,L_0$ satisfying:
$$4K\ell_1<\ell_2 < \tfrac1{4K}L_0.$$

Proposition~\ref{l.small.product.structure} can be restated as follows:
\begin{proposition}\label{p.def-N}
There exist $x_0\in \supp(\mu)$, $L>0$ close to $L_0$,
$\delta>0$ small,
and a measurable set $Z\subset \cW^{su}_{\delta,\delta}(x_0)$
with the following properties:
\begin{itemize}
\item[(a)] The set $Z$ has a cs$\times$u-product structure. 

\item[(b)] Two sets $B^c_{2L}(z)$, $B^c_{2L}(z')$ are disjoint for any $z\neq z'$ in $Z$.

\item[(c)] For any $z\in Z$, $z'\in \cW^{u}_Z(z)$, the u-holonomy defines a homeomorphism
$\fh^u_{z,z'}$ from $B^c_{L}(z)$ to a subset of $B^c_{L+1}(z')$ containing $B^c_{L-1}(z')$.

\item[(d)] For any $z\in Z$, $z'\in \cW^{s}_Z(z)$, the s-holonomy defines a homeomorphism
$\fh^s_{z,z'}$ from $B^c_{L}(z)$ to a subset $B^c_{L+1}(z')$ containing $B^c_{L-1}(z')$.

\item[(e)] $\mu(\cup_{z\in Z\cap U} B^c_{\ell_1/2})>0$, for any neighborhood $U$ of $x_0$.

\item[(f)] There exists an invariant full measure set $\Omega$ such that
for any $z\in Z$,
$$\{z'\in W^u_{2\delta}(z)\cap \cW^{su}_{\delta,\delta}(x_0); \;\;B^c_{2L}(z')\cap \Omega\neq \emptyset\}\subset Z.$$
\end{itemize}
\end{proposition}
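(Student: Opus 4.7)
The statement is essentially a repackaging of Lemma~\ref{l.small.product.structure} together with Lemma~\ref{l.small.product.structure2}, so the proof plan is a parameter-matching exercise with one small symmetry argument at the end.

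\textbf{Step 1 (choice of parameters).} Fix $\eta>0$ small and apply Lemma~\ref{l.small.product.structure} with $L^-:=L_0-\eta$ and $L^+:=L_0+\eta$. This yields a point $x_0\in\supp(\mu)$, a number $L\in(L_0-\eta,L_0+\eta)$ (so $L$ is as close to $L_0$ as we wish), a small radius $r>0$, and, for every sufficiently small $\delta>0$, a set $Z\subset\cW^{su}_{\delta,\delta}(x_0)$ satisfying items (i)--(v) of that lemma. The scale conditions $4K\ell_1<\ell_2<L_0/(4K)$ play no role in the existence of $Z$; they are only constraints used later (in Section~5) and are compatible with any $L$ close enough to $L_0$.

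\textbf{Step 2 (translation of (a)--(e)).} Items (a), (b), (c), (d) of the proposition are literally items (i), (ii), (iv), (v) of Lemma~\ref{l.small.product.structure}; note that (ii) gives the stronger disjointness of the $B^c_{4L}(z)$, which implies the required disjointness of the $B^c_{2L}(z)$. Item (e) is Lemma~\ref{l.small.product.structure}(iii) applied with $\ell:=\ell_1/2$, which yields $\mu\bigl(\bigcup_{z\in Z\cap U}B^c_{\ell_1/2}(z)\bigr)>0$ for every neighborhood $U$ of $x_0$.

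\textbf{Step 3 (proof of (f)).} Apply Lemma~\ref{l.small.product.structure2} to obtain the invariant full measure set $\Omega$ (the set $X$ of Theorem~\ref{t.prod.struct}), together with the property that $Z$ contains every point $z''\in\cW^{su}_{\delta,\delta}(x_0)$ such that $B^c_{2L}(z'')\cap \Omega\neq\emptyset$ and $W^u_{2\delta}(z'')\cap Z\neq\emptyset$. To deduce (f), fix $z\in Z$ and consider $z'\in W^u_{2\delta}(z)\cap\cW^{su}_{\delta,\delta}(x_0)$ with $B^c_{2L}(z')\cap \Omega\neq\emptyset$. Since $d^u$ is symmetric, $z\in W^u_{2\delta}(z')$, so $W^u_{2\delta}(z')\cap Z\ni z$ is non-empty. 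Both hypotheses of Lemma~\ref{l.small.product.structure2} are satisfied, hence $z'\in Z$, which is exactly (f).

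There is no real obstacle: all of the geometric work (the existence of the cs$\times$u-product structure, the uniform bounds on the holonomy maps $\fh^u_{z,z'}$ and $\fh^s_{z,z'}$, the positivity of $\mu$-mass near $x_0$, and the $\Omega$-saturation property) is already done in Section~\ref{s.global.prod.structure}. The one point that requires a moment of care is ensuring that the scale constraint $4K\ell_1<\ell_2<L_0/(4K)$ does not conflict with the requirement $L\approx L_0$, but since $L$ can be chosen in an arbitrarily narrow window around $L_0$ this is automatic.
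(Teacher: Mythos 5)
Your proof is correct and follows essentially the same route as the paper's (which simply cites Lemma~\ref{l.small.product.structure} for items (a)--(e) and Lemma~\ref{l.small.product.structure2} for item (f)). You helpfully spell out two small points the paper leaves implicit: that Lemma~\ref{l.small.product.structure}(ii) gives disjointness at scale $4L$, hence a fortiori at scale $2L$, and that deducing (f) from Lemma~\ref{l.small.product.structure2} uses the symmetry $z'\in W^u_{2\delta}(z)\iff z\in W^u_{2\delta}(z')$ to verify the hypothesis $W^u_{2\delta}(z')\cap Z\neq\emptyset$.
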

\begin{proof}
The Proposition~\ref{l.small.product.structure}
provides us with a point $x_0$, some $L$ close to $L_0$,
and a small number $r>0$.
Fixing $\delta>0$ small, one also gets
a measurable set $Z\subset \cW^{su}_{\delta,\delta}(x_0)$
which satisfies properties (a-e).
The last property (f) is given by Lemma~\ref{l.small.product.structure2}.
\end{proof}

We fix $r>0$ small. Note that we can reduce $\delta>0$ keeping the previous properties. One can thus furthermore require
(from Proposition~\ref{p.cover}):
\begin{itemize}\it
\item[(g)] For any $z\in Z$, $z'\in \cW^s_Z(z)$, we have $B^c_L(z')\subset B^{cs}_{L+1}(z)$.
\item[(h)] For any $z\in Z$, $z'\in  \cW^s_Z(z)$ (resp. $\cW^u_Z(z)$) and $y\in B^c_L(z)$, we have
$d(\fh^s_{z,z'}(y),y)<r$ (resp. $d(\fh^u_{z,z'}(y),y)<r$).
\item[(i)] $D^u:=B^u_\delta(x_0)$ has small boundary
$\partial^{cs}_{2L}D^u$ (as in definition~\ref{d.boundary}).
\end{itemize}

\subsubsection{The regions $R_1\subset R_2\subset \cN$}
We first define $\cN\subset\cup_{z\in Z} B^c_L(z)$, using Proposition~\ref{p.def-N}(a-c):
a point $y\in B^c_L(z)$ belongs to $\cN$ if for any $z'\in \cW^{u}_Z(z)$
we have $\fh^u_{z,z'}(y)\in B^c_L(z')$. We then associate the sets:
$$\cW^c_\cN(y):=B^c_{L}(z)\cap \cN, \;\; \cW^u_\cN(y):=\{\fh^u_{z,z'}(y),z'\in \cW^u_Z(z)\},$$
$$\cW^{cu}_\cN(y):=\bigcup_{z'\in \cW^{u}_Z(z)} B^c_L(z'),\;\;
\cW^{cs}_\cN(y)=\bigcup_{z'\in \cW^{s}_Z(z)} B^c_L(z').$$
This induces measurable partitions $\cW^c_\cN,\cW^u_\cN,\cW^{cu}_\cN,\cW^{cs}_\cN$ of $\cN$.
Note also that $ \cW^{cu}_\cN(y)= \cup_{y'\in \cW^{c}_\cN(y)} \cW^u_\cN(y')$ has a c$\times$u-product structure
and that $B^c_{L-1}(z)\subset \cW^c_\cN(z)\subset B^c_{L}(z)$ for each $z\in Z$.
Since $\cW^s$ and $\cW^u$ are not jointly integrable,
$\cW^{cs}_{\cN}(y)$ do not have in general a c$\times$s-pro\-duct structure.
\medskip

One defines $R_1$ by cutting $\cN$ in the center at scale $\ell_1$:
it is the set of points $y\in \cN$ such that
$ \cW^u_\cN(y)$ is contained in $\bigcup_{z\in Z} B^c_{\ell_1}(z)$.
By Proposition~\ref{p.def-N}(e),
\begin{equation}\label{e.R1-positive}
\mu(R_1)>0.
\end{equation}
For any $y\in R_1$ and $*\in\{c,u,cu,cs\}$, we define
the set $\cW^*_{R_1}(y):=\cW^*_\cN\cap R_1$.
The families $\cW^*_{R_1}$ are measurable partitions of $R_1$.
\medskip

One defines $R_2$ analogously, by cutting $\cN$ at scale $\ell_2$
and measurable partitions $\cW^*_{R_2}$.
The choices of $\ell_1,\ell_2,L$ imply the following.

\begin{lemma}\label{l.return0}
If $\delta$ is small enough, then for any $n\in \mathbb{Z}$ and any $x\in \cN$:
\begin{itemize}
\item[--] if $x\in R_1\cap f^{-n}(R_1)$, we have $f^n(\cW^c_{R_2}(x))\supset \cW^c_{R_1}(f^n(x))$,
\item[--] if $x\in R_2\cap f^{-n}(R_2)$, we have $f^n(\cW^c_{R_2}(x))\subset \cW^c_{\cN}(f^n(x))$,
\item[--] if $x\in R_2$, we have $f^n(\cW^c_{R_2}(x))\subset B^c_{L-1}(f^n(x))$.
\end{itemize}
\end{lemma}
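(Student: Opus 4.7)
The plan is to prove the three assertions in turn, using only two ingredients: the quasi-isometric estimate~\eqref{e.quasi-restated} applied to the center distance, and the $f$-invariance of the unstable foliation, which lets u-holonomies commute past $f^n$ via the identity $\fh^u_{f^n(z),f^n(z')}\circ f^n = f^n\circ \fh^u_{z,z'}$. The size hierarchy $(2K+1)\ell_1 < \ell_2/2 < L/4$, built into the choice $4K\ell_1 < \ell_2 < \tfrac{1}{4K} L_0$, will absorb every quasi-isometric expansion we encounter, and the crucial book-keeping fact is that membership in $R_j$ automatically places a point at center distance $\leq \ell_j$ from its $Z$-reference (take $z' = z$ in the definition of $R_j$).

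The third bullet is immediate: if $x' \in \cW^c_{R_2}(x)$, then $x,x' \in B^c_{\ell_2}(z_x)$ since both lie in $R_2$, so $d^c(x,x') \leq 2\ell_2$ and the quasi-isometric inclusion yields $d^c(f^n(x),f^n(x')) \leq 2K\ell_2 \leq L-1$. For the first bullet, given $y' \in \cW^c_{R_1}(f^n(x))$ I would set $x' := f^{-n}(y')$. Since $y'$ and $f^n(x)$ both lie in $B^c_{\ell_1}(z_{f^n(x)})$, applying quasi-isometry to $f^{-n}$ gives $d^c(x,x') \leq 2K\ell_1$, hence $x' \in B^c_{(2K+1)\ell_1}(z_x)\subset B^c_L(z_x)$. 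To upgrade this to $x'\in R_2$ one checks, for each $z' \in \cW^u_Z(z_x)$, that $\fh^u_{z_x,z'}(x') \in B^c_{\ell_2}(z')$; by equivariance this quantity equals $f^{-n}(\fh^u_{f^n(z_x),f^n(z')}(y'))$, and the $R_1$-membership of $y'$ controls the inner u-translate, which is then contracted to an $\ell_2$-neighborhood under $f^{-n}$ by a further application of~\eqref{e.quasi-restated}.

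The second bullet proceeds similarly in the forward direction: from $x' \in \cW^c_{R_2}(x)$, the third-bullet estimate gives $f^n(x') \in B^c_{L-1}(f^n(x))$, and combining with $f^n(x)\in B^c_{\ell_2}(z_{f^n(x)})$ places $f^n(x')$ in $B^c_{(2K+1)\ell_2}(z_{f^n(x)})\subset B^c_L(z_{f^n(x)})$. The $\cN$-membership follows from $x' \in \cN \subset R_2$ by transporting its defining u-translate bounds forward under $f^n$ using the same equivariance, with the gap between $\ell_2$ and $L$ absorbing the expansion factor $K$.

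The main obstacle is precisely the $\cN$-membership check in the first two bullets: the definition of $\cN$ involves u-translates onto a fixed $Z$-structure near $x_0$, whereas $f^{\pm n}$ sends those translates onto $f^{\pm n}$-images of $Z$, which are not themselves in $Z$. Bridging these two pictures requires combining Proposition~\ref{p.def-N}(c),(h) (the uniform $r$-closeness of u-holonomies to the identity and the homeomorphism property of $\fh^u_{z,z'}$ on $B^c_L$-plaques) with the equivariance identity above, in order to convert the dynamical u-translate estimates into estimates against the fixed $Z$-fibers $\cW^u_Z(z_x)$ and $\cW^u_Z(z_{f^n(x)})$.
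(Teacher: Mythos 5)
Your handling of the center-distance estimates is fine: the quasi-isometric inclusion \eqref{e.quasi-restated} applied to the appropriate center ball gives $d^c(f^{-n}(y'),x)<2K\ell_1$ in the first bullet and $d^c(f^n(x),f^n(x'))<2K\ell_2$ in the second/third, and adding the $\ell_1$ (resp.\ $\ell_2$) displacement to the $Z$-reference point produces $(2K+1)\ell_1$ (resp.\ $(2K+1)\ell_2$), which the scale hierarchy comfortably fits under $\ell_2$ (resp.\ $L$). This matches the paper's computations.

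However, the strategy you propose for the $\cN$- and $R_2$-membership has a genuine gap, and the obstacle you point to at the end is not resolved by the route you sketch. You try to verify the $R_2$ condition for $x'=f^{-n}(y')$ by writing, via $f$-equivariance, $\fh^u_{z_x,z'}(x') = f^{-n}\bigl(\fh^u_{f^n(z_x),f^n(z')}(y')\bigr)$, and then controlling the inner quantity by the $R_1$-membership of $y'$. But $R_1$-membership of $y'$ only controls the u-translates $\fh^u_{z_{f^n(x)},w'}(y')$ for the \emph{fixed} reference points $w'\in\cW^u_Z(z_{f^n(x)})$; the points $f^n(z_x)$ and $f^n(z')$ do not lie in $Z$, so the inner translate is not one of the ones that the $R_1$ condition controls. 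Moreover $\fh^u_{z,z'}$ in Proposition~\ref{p.def-N}(c) is only defined on the plaque $B^c_L(z)$ for $z,z'\in Z$, so the displayed equivariance identity is not even immediately meaningful for the points $f^n(z_x),f^n(z')$ without first invoking the global holonomy of Theorem~\ref{t.prod.struct}, which comes with its own almost-everywhere caveats. So the ``combine (c), (h), and equivariance'' plan does not close the loop.

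The paper avoids all of this by a much shorter observation that you do not make: once $\delta$ is small, the u-holonomies $\fh^u_{z,z'}$ between $Z$-plaques (the \emph{non-iterated} ones) are uniformly close to isometries on $B^c_L$, so $\cW^c_{R_2}(z)\supset B^c_{0.9\ell_2}(z)$ for every $z\in Z$ (and similarly $\cW^c_\cN(z)$ contains a $B^c$-ball of radius close to $L$). Then the distance estimate you already derived --- $f^{-n}(y')\in B^c_{(2K+1)\ell_1}(z_x)\subset B^c_{0.9\ell_2}(z_x)$ --- \emph{immediately} places $f^{-n}(y')$ in $\cW^c_{R_2}(x)$, with no need to transport u-translate conditions under $f^{\pm n}$ at all. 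The second and third bullets are handled by the same ``$\delta$ small $\Rightarrow$ a $B^c$-ball of the right radius sits inside $\cW^c_{R_2}$ or $\cW^c_\cN$'' device. In short: your distance computations are essentially the paper's, but the key reduction (absorb the membership check into a one-time smallness choice of $\delta$, not an iterated-holonomy argument) is missing, and the replacement you propose does not work.
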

\begin{proof}
Taking $\delta>0$ small,
for any $z\in Z$, we have $\cW^c_{R_2}(z)\supset B^c_{0.9\ell_2}(z)$.

Let us take $x\in R_1\cap f^{-n}(R_1)$.
For any $y\in \cW^c_{R_1}(f^n(x))$, we have
$d^c(y, f^n(x))<2\ell_1$. The quasi-isometric property, the choice of $K$ and of the scales $\ell_1,\ell_2$ give
$d^c(f^{-n}(y),x)<2\ell_1K<0.9\ell_2 -\ell_1$. Since $x\in B^c_{\ell_1}(z)$ for some $z\in Z$,
one deduces $f^{-n}(y)\in B^c_{0.9\ell_2}(z)\subset \cW^c_{R_2}(x)$.
This gives the first property. The second and third ones are proved analogously.
\end{proof}

\subsubsection{Separation of center plaques}
Given $z,z'\in Z$ and $n\geq 0$ we say that $z$ does not separate from $z'$ until time $n$ if
$f^j\fh^u_{z,z''}(y)\in \cW^u_{loc}(f^j(y))$
for any $y\in \cW^c_{R_2}(z)$ and $0\leq j\leq n$
where $z''=[z,z']$.

\begin{lemma}\label{l.scales}
If $r$ is small enough, then for each $n\geq 0$, the property
$$\text{``$z$ does not separate from $z'$ until time $n$"}$$
is an equivalence relation on
$\{z\in Z, \; f^n\cW^c_{R_2}(z)\subset \cN\}$.
(By symmetry of the relation, one can thus say that ``$z,z'$ do not separate until time $n$".)
\end{lemma}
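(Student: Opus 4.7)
The plan is to verify the three axioms of an equivalence relation. Reflexivity is immediate: when $z'=z$, we have $[z,z]=z$ and $\fh^u_{z,z}=\id$, so $f^j\fh^u_{z,z}(y)=f^j(y)\in \cW^u_{loc}(f^j(y))$ holds trivially.

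For symmetry, I would exploit the cs$\times$u-product structure of $Z$ established in Proposition~\ref{p.def-N}(a). The four points $z$, $w:=[z,z']$, $z'$, $w':=[z',z]$ form a rectangle: $z,w$ lie on $\cW^u(z)$; $w,z'$ share $\cW^{cs}(z')$; $z',w'$ lie on $\cW^u(z')$; and $w',z$ share $\cW^{cs}(z)$. In the local product parametrization, the u-displacement $d^u(f^jz,f^jw)$ on $\cW^u(f^jz)$ and the displacement $d^u(f^jw',f^jz')$ on $\cW^u(f^jz')$ measure the same u-coordinate change, and differ only by an s-holonomy error which is bounded initially by $r$ and contracts under $f^j$. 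Taking $r$ small makes this error negligible compared with $\varepsilon_0$, so the condition $<\varepsilon_0$ transfers at every $0\le j\le n$. Passage from the base points $z,w$ to arbitrary $y\in \cW^c_{R_2}(z)$ follows from uniform continuity of u-holonomies over center leaves of bounded $\ell_2$-diameter, using the quasi-isometric behavior of $f$ in the center.

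For transitivity, given $z\sim_n z'$ and $z'\sim_n z''$, the key observation is that $z$, $w_1:=[z,z']$ and $w_3:=[z,z'']$ all lie on the common u-leaf $\cW^u(z)$, so the u-holonomy factors as $\fh^u_{z,w_3}=\fh^u_{w_1,w_3}\circ\fh^u_{z,w_1}$. The first displacement $d^u(f^jy,f^j\fh^u_{z,w_1}(y))$ is bounded by the first hypothesis. For the second displacement along $\cW^u(z)$ from $\cW^c(w_1)$ to $\cW^c(w_3)$, I would use the rectangle structure: via the s-holonomy $\fh^s$ relating the u-leaf $\cW^u(z)$ to $\cW^u(z')$, this displacement is comparable to the u-displacement from $\cW^c(z')$ to $\cW^c(w_2)$ on $\cW^u(z')$, which the second hypothesis bounds. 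Since s-holonomy contracts exponentially under $f^j$, the comparison is asymptotically isometric, and the bound $<\varepsilon_0$ transfers.

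The principal obstacle is that naively combining two ``$<\varepsilon_0$'' bounds through the triangle inequality yields only ``$<2\varepsilon_0$''. I expect this to be resolved by choosing $r$ sufficiently small: all initial u-displacements in $Z$ are bounded by $r$ and grow at most like $\Lambda^n$ (where $\Lambda$ is the u-expansion rate) under iteration, so whenever $\sim_n$ contains non-trivial pairs one has $r\Lambda^n\ll\varepsilon_0$, and each summand in the decomposition is much smaller than $\varepsilon_0$, leaving room for composition. For $n$ so large that $r\Lambda^n\gtrsim\varepsilon_0$, the only $z'$ related to $z$ is $z$ itself, so the equivalence relation reduces to the diagonal and all axioms hold vacuously.
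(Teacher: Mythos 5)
Your reflexivity argument is fine and your geometric picture for symmetry and transitivity (factor the $u$-holonomy through the rectangle corners and control the discrepancy by $s$-holonomy errors that shrink forward in time) is the right one. But both arguments rest on treating the ``does not separate'' hypothesis as the bound $d^u(f^j y, f^j\fh^u_{z,z''}(y))<\varepsilon_0$ (the radius of $\cW^u_{loc}$) and then transferring or summing such bounds; since $\varepsilon_0+O(r)\not<\varepsilon_0$, this cannot close on its own. Your proposed remedy --- that initial $u$-displacements are $\leq r$, grow at most like $\Lambda^n$, and that non-trivial pairs only occur when $r\Lambda^n\ll\varepsilon_0$, the other regime being vacuous --- is incorrect. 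For any $n$ there exist distinct $z,z'\in Z$ close enough to satisfy $z\sim_n z'$, so the relation is never reduced to the diagonal for large $n$, and the upper estimate $r\Lambda^n$ for the time-$n$ displacement is far from tight precisely on the domain of the lemma.

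The missing idea is that the domain $\{z:\,f^n\cW^c_{R_2}(z)\subset\cN\}$ does all the work. Because $f^n(\bar y)$ lands back in $\cN$, property (h) of the construction ($d(\fh^u_{\zeta,\zeta'}(y),y)<r$ for $\zeta\in Z$, $\zeta'\in\cW^u_Z(\zeta)$, $y\in B^c_L(\zeta)$) forces $d^u\bigl(f^n(\bar y),f^n\fh^u_{z,z''}(\bar y)\bigr)<r$ at time $n$, and backward contraction of $E^u$ then gives $<r$ for all $0\leq j\leq n$. So each summand in your triangle-inequality decomposition is actually $<r$ (not merely $<\varepsilon_0$ or $\leq r\Lambda^j$), and the sum $<2r$ or $<3r$ is $<\varepsilon_0$ once $r$ is small, uniformly in $n$ with no case split. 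The paper also needs a preliminary step you omit: extending the non-separation bound from $\cW^c_{R_2}(z)$ to $\cW^c_\cN(z)$, since in the symmetry argument the relevant point $y=\fh^u_{z'',z}\circ\fh^s_{z',z''}(y')$ need not lie in $\cW^c_{R_2}(z)$; and a local-chart claim that turns $d^u(y_1,y_2)<r$, $d^s(y_2,y_3)<r$ into $d^u(y_3,y_4)<2r$ for the fourth corner. Without the return-to-$\cN$ mechanism your plan has no source for the needed smallness.
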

\begin{proof}
We first claim that, by taking $r$ small enough, the separation property holds
for any point $y\in \cW^c_{\cN}(z)$ (rather than $\cW^c_{R_2}(z)$) at return times to $\cN$:
\begin{claim}
If $z$ does not separate from $z'$ until time $n$ and
$f^n(\cW^c_{R_2}(z)\cup \cW^c_{R_2}(z'))\subset \cN$, then
$f^j\fh^u_{z,z''}(y)\in \cW^u_{loc}(f^j(y))$
for any $y\in \cW^c_{\cN}(z)$ and $0\leq j\leq n$.
\end{claim}
\begin{proof}
By (h), if $\bar y\in  \cW^c_{R_2}(z)$, then $d(f^j(\bar y),f^j(\fh^u_{z,z''}(\bar y)))<r$ for $j=n$,
hence for any $0\leq j\leq n$ since $E^u$ is contracted by backward iterates.
Assuming $r$ small enough, and using that $f^j(\cW^c_{\cN}(z))$
has diameter bounded by $2KL$, one deduces that
$f^j(\cW^c_{\cN}(z))$ and $f^j(\cW^c_{\cN}(z'))$ are close for $0\leq j\leq n$.
Hence $f^j\fh^u_{z,z''}(y)$ is close to $f^j(y)$ for $y\in \cW^c_{\cN}(z)$.
\end{proof}

The relation is obviously reflexive.
We prove that it is symmetric.
Let us take $z,z'\in Z$ such that $f^n(\cW^c_{R_2}(z)\cup \cW^c_{R_2}(z'))\subset \cN$, and $z$ does not separate from $z'$ until time $n$. We define $z'':=[z,z']$ and $z'''=[z',z]$.
Given $y'\in \cW^c_{R_2}(z')$ let $y'''=\fh^u_{z',z'''}(y')$. We have to prove that $f^j(y''')\in \cW^u_{loc}(f^j(y'))$ for every $0\leq j\leq n$.

Let $y'':=\fh^s_{z',z''}(y')$ and $y=\fh^u_{z'',z}(y'')$.
By (h), we have $d^u(y,y'')<r$ and $d^s(y',y'')<r$. Moreover as $f^n\cW^c_{R_2}(z)\cup f^n\cW^c_{R_2}(z')\subset \cN$ and since $z$ does not separate from $z'$,
the previous claim implies $f^j(y'')\in \cW^u_{loc}(f^j(y))$ for every $0\leq j\leq n$. We also have $d^u(f^n(y),f^n(y''))<r$.
Since $E^u$ (resp. $E^s$)  is contracted by backward (resp. forward) iterates, we can conclude
for any $0\leq j\leq n$:
\begin{equation}\label{eq.sep1}
d^u(f^j(y),f^j(y''))<r\quand d^s(f^j(y'),f^j(y''))<r.
\end{equation}
Fix $0\leq j\leq n$. We set $y_1:=f^j(y)$, $y_2:=f^j(y'')$, $y_3:= f^j(y')$, $y_4:=f^j(y''')$.

\begin{claim}
If $r>0$ is small, and if the points $y_1,y_2,y_3,y_4$ satisfy $d^u(y_1,y_2)<r$, $d^s(y_2,y_3)<r$, $y_4\in \cW^{cs}_{loc}(y_0)\cap \cW^u_{loc}(y_3)$ then $d^u(y_3,y_4)<2r$.
\end{claim}
\begin{proof}
For $r$ small, the points $y_i$ are close and belongs to a chart where the bundles $E^s,E^c,E^u$ are close
to constant bundles which correspond to coordinate axes. The claim can be concluded by estimating successively
the coordinates of the points $y_i$.
\end{proof}
By the claim and \eqref{eq.sep1}, $d^u(f^j(y'),f^j(y'''))<2r$, hence $f^j(y''')\in \cW^u_{loc}(f^j(y'))$. As this holds for all $0\leq j \leq n$, $z'$ does not separate from $z$ in time $n$.
\medskip

For the transitivity, we fix $z_1,z_2,z_3$ with $f^n(\cW^c_{R_2}(z_1)\cup \cW_{R_2}(z_2)\cup  \cW^c_{R_2}(z_3))$
contained in $\cN$, such that $z_1$ does not separate from $z_2$ until time $n$ and $z_2$ does not separate from $z_3$ until time $n$.
Take $z_{i,j}=[z_i,z_j]$, $i\neq j$, $1\leq i,j \leq 3$. Let $x_1\in \cW^c_{R_2}(z_1)$ and $x_{1,i}=\fh^u_{z_1,z_{1,i}}(x_1)$, for $i=2,3$ 

For every $0\leq j \leq n$ we have 
\begin{equation}\label{eq.sep2}
d^u(f^j(x_1),f^j(x_{1,3}))\leq d^u(f^j(x_1),f^j(x_{1,2}))+d^u(f^j(x_{1,2}),f^j(x_{1,3})).
\end{equation}
Because $z_1$ does not separate from $z_2$ and the centers come back to $\cN$, analogously to $\eqref{eq.sep1}$ we have $d^u(f^j(x_1),f^j(x_{1,2}))<r$.

Let $y_4=f^j x_{1,2}$, $y_3=f^j x_{1,3}$, $ y_2=f^j\fh^s_{z_{1,3},z_{2,3}}(x_{1,3})$, $y_1=f^j\fh^u_{z_{2,3},z_{2}}(y_3)$ and $y_0=f^j\fh^s_{z_2,z_{1,2}}(y_1)$. As $z_2$ does not separate from $z_3$ until time $n$ and $f^n\cW^c_{R_2}(z_i)\subset \cN$ $i=2,3$, the first claim implies again $d^u(y_1,y_2)<r$. The second claim and \eqref{eq.sep2} give
$
d^u(f^j(x_1),f^j(x_{1,3}))\leq 3r,
$
so that $f^j(x_{1,3})\in \cW^u_{loc}(f^j(x_1))$. This concludes the transitivity.
\end{proof}

\subsubsection{Pre-partitions $\cQ_n$}
We define for each $n\geq 0$ a partition $\cQ_n$ of $R_2$ saturated by $\cW^c_{R_2}$ sets in the following way.
Two sets $\cW^c_{R_2}(z), \cW^c_{R_2}(z')$ are contained in the same atom if:
\begin{itemize}
\item[--] either both $f^n(\cW^c_{R_2}(z))$ and $f^n(\cW^c_{R_2}(z'))$ are not contained in $\cN$,
\item[--] or $f^n(\cW^c_{R_2}(z)\cup \cW^c_{R_2}(z'))\subset \cN$ and $z,z'$ do not separate until time~$n$.
\end{itemize}

\subsection{A criterion for zero transverse entropy}
\subsubsection{Integrated transverse entropy}\label{ss.integrated-entropy}
Consider a partition $\cP^u=\{P^u\}$ of $D^u$ with small boundary
$\partial^{cs}_{2L}\cP^u$
as in Proposition~\ref{p.cover}.
It induces a partition $\cP_{R_1}$ of $R_1$ (also denoted by $\cP$) into sets $P$ of the form:
$$P=\bigcup_{y\in P^u} \cW^{cs}_{R_1}(y).$$
Each set $\cW^{cu}_{R_1}(z)$ has a product structure $\cW^c_{R_1}(z)\times \cW^u_{R_1}(z)$.
One introduces its transverse entropy for the measure $\mu^{cu}_{R_1,z}$ on $\cW^{cu}_{R_1}(z)$, as in section~\ref{s.trans.entropy}.
$$\Gap(R_1,\cP)(z):=\Gap(\cW^{cu}_{R_1}(z),\cP\mid_{\cW^{cu}_{R_1}(z)}).$$
One then defines the \emph{integrated transverse entropy} by
$$\overline{\Gap}(R_1,\cP)=\int_{R_1}\Gap(R_1,\cP)(z)d\mu(z).$$

We are going to prove that the partition $\cP$ has zero transverse entropy.
\begin{proposition}\label{p.entropy.gap0}
If $h(f,\mu)=h^u(f,\mu)$, then every partition $\cP^u$ of $D^u$ with small boundary induces a partition
$\cP$ of $R_1$ satisfying $\overline{\Gap}(R_1,\cP)=0$.
\end{proposition}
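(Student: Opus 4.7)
The plan is to argue by contradiction: assume $\overline{\Gap}(R_1,\cP) > 0$. I will build a sequence of measurable partitions $\cP_n$ of $R_2$ with $\cP_n \prec \cP_{n+1}$, whose integrated transverse entropy $\overline{\Gap}(R_2,\cP_n)$ grows linearly in $n$, while at the same time satisfying
$$\limsup_{n\to\infty} \tfrac1n \overline{\Gap}(R_2,\cP_n) \le h(f,\mu)-h^u(f,\mu).$$
Under the hypothesis $h(f,\mu)=h^u(f,\mu)$ the right-hand side vanishes, giving the contradiction.

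The natural candidate is to refine the pre-partition $\cQ_n$ by the orbit's visits to $\cP$. Let $\widehat\cP$ denote the partition of $M$ obtained from $\cP$ by adjoining $M\setminus R_1$ as a single extra atom, and set
$$\cP_n := \cQ_n \vee \bigvee_{j=0}^{n} f^{-j}\widehat\cP.$$
By construction $\cP_n \prec \cP_{n+1}$, so Corollary~\ref{c.iterates.gap} expresses $\overline{\Gap}(R_2,\cP_n)$ as a telescoping sum of per-step increments. Each increment is handled via Proposition~\ref{p.gap.prop}(iv): at every return time $j$ with $f^j(x)\in R_1$ for which the \emph{covering condition} $f^j(\cP_j(x)) \supset \cW^{cu}_{R_1}(f^j(x))$ holds, the $j$th step contributes at least a definite fraction of $\Gap(R_1,\cP)(f^j(x))$. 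Since by Birkhoff's ergodic theorem the frequency of returns to $R_1$ tends to $\mu(R_1)>0$, provided the covering condition holds on a positive-density subset of such returns one obtains $\overline{\Gap}(R_2,\cP_n) \ge c\cdot n\cdot \overline{\Gap}(R_1,\cP)+o(n)$ with $c > 0$.

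Verifying this covering condition is the main technical obstacle, and requires the combination of three ingredients. First, the quasi-isometric estimate~\eqref{e.quasi-restated}, together with Lemma~\ref{l.return0}, ensures $f^j(\cW^c_{R_2}(x)) \supset \cW^c_{R_1}(f^j(x))$ whenever $x$ and $f^j(x)$ both lie in $R_1$, providing the necessary center extent. Second, the definition of $\cQ_n$ and Lemma~\ref{l.scales} identify the $\cQ_n$-atom of $x$ with the full u-fiber at time $j$ over a local unstable ball $B^u_{\rho_j}(f^j(x))$ of definite radius, up to losing a set of arbitrarily small measure. Third, Proposition~\ref{p.rho} applied to the small-boundary partition $\cP^u$ --- using Remark~\ref{r.iterate} to cover all intermediate iterates $0\le j \le n$ --- provides a set of measure at least $1-\beta$ on which $B^u_\rho(f^n(x))$ avoids every $f^j(\partial^{cs}_{2L}\cP^u)$; by unstable expansion this forces $f^n(\cP_n(x))$ to contain $\cW^{cu}_{R_1}(f^n(x))$, as needed.

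Finally, the matching upper bound is standard: by the Kolmogorov--Sinai theorem $\tfrac1n H(\cP_n) \to h(f,\mu,\widehat\cP) \le h(f,\mu)$, while along each $\cW^u$-leaf the partition $\cP_n$ restricts to the dynamical refinement of $\cP^u$, giving $\tfrac1n H^u(\cP_n) \to h^u(f,\mu)$ from the Ledrappier--Young definition of unstable entropy. Subtracting yields the claimed $\limsup_n \tfrac1n \overline{\Gap}(R_2,\cP_n) \le h(f,\mu)-h^u(f,\mu)$. Under the hypothesis this is zero, contradicting the linear lower bound, and we conclude $\overline{\Gap}(R_1,\cP)=0$.
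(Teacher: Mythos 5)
Your overall architecture is exactly that of the paper: assume $\overline{\Gap}(R_1,\cP)>0$, produce a nested family of partitions, telescope $\overline{\Gap}(R_2,\cdot)$ via Corollary~\ref{c.iterates.gap}, show a linear lower bound via returns plus a covering property, and show the matching upper bound is $h(f,\mu)-h^u(f,\mu)$. So you have identified the right strategy. But several steps that you treat as routine are in fact the crux of the paper's argument, and as written your proof has genuine gaps.

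First, your candidate $\cP_n:=\cQ_n\vee\bigvee_{j=0}^n f^{-j}\widehat\cP$ is not nested, because $(\cQ_n)$ is not increasing: a pair $z,z'$ may lie in different $\cQ_{n-1}$-atoms (say one of $f^{n-1}\cW^c_{R_2}(z)$, $f^{n-1}\cW^c_{R_2}(z')$ in $\cN$, the other not) yet in the same $\cQ_n$-atom (both outside $\cN$ at time $n$). The paper fixes this by taking the join $\cP^f_n:=\bigvee_{k=0}^n\widetilde\cQ_k$, which you would also need; this is easy to repair.

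Second, and this is the most serious gap, your covering step ``by unstable expansion this forces $f^n(\cP_n(x))$ to contain $\cW^{cu}_{R_1}(f^n(x))$'' cannot be carried out with the single map $f$. After projecting to the u-fiber you need $f^{n-1}(y')\in B^u_{\rho}(f^{n-1}(x))$ knowing only that $f^n(y')$ lies in a u-fiber of $R_1$, which has a fixed diameter of order $r$. One backward step of $f$ need not contract a ball of radius $r$ down to radius $\rho'<\rho$ (the contraction rate $\hat\nu$ may be arbitrarily close to $1$). This is precisely why the paper passes to $g=f^m$ with $m$ chosen so that $g(B^u_{\rho'}(z))\supset B^u_r(g(z))$, and carries out the entire lower-bound argument for $g$. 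Without this, Proposition~\ref{p.crossing} fails. A further consequence you overlook is that once one works with $g=f^m$, the measure $\mu$ need not be $g$-ergodic, so your appeal to Birkhoff for the return frequency is unavailable; the paper instead invokes the von Neumann ergodic theorem (Lemma~\ref{l.vonneuman}) to get the $\mu(R_1')^2$ lower bound.

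Third, the assertion that each return contributes ``at least a definite fraction of $\Gap(R_1,\cP)(f^j(x))$'' hides the hardest quantitative step. Proposition~\ref{p.gap.prop}(iv) gives the factor $\mu^{cu}(\cW^{cu}_{R_1}(g^n z))/\mu^{cu}(g^n(\cP_{n-1}(z)\cap\cW^{cu}_{R_2}(z)))$, and nothing \emph{a priori} bounds this away from zero: the denominator could be much larger than the numerator. The paper controls it by first bounding the u-diameter of $g^n(\cP_{n-1}(z)\cap\cW^u_{R_2}(z))$ by a uniform $r'$ (Proposition~\ref{p.bound.diam}, which uses the auxiliary return Lemma~\ref{l.return}), and then running a Lusin-type argument to make the ratio continuous and hence uniformly bounded below by some $\alpha>0$ on $R_1'$. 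You do not address this at all.

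Fourth, the upper bound $\frac1n H(\cP_n)\to h(f,\mu,\widehat\cP)$ is not a direct consequence of Kolmogorov--Sinai, because $\cP_n$ contains the factor $\cQ_n$, which is not the dynamical refinement of any fixed finite partition and may contribute entropy linearly in $n$ on its own. The paper's Lemma~\ref{l.ineq1} overcomes this by introducing a finer static partition $\widetilde\cP:=\cP'_\cN\vee\{R_2,M\setminus R_2\}\vee\cA$ (with $\cA$ of small diameter) and proving, as a separate and nontrivial Claim, that the dynamical refinement $\widetilde\cP^f_n$ dominates $\cP^f_n$ \emph{within each cu-plaque}; this is where the ``non-separation'' condition encoded in $\cQ_n$ is shown to be captured by a genuine dynamical refinement. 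Your upper bound likewise glosses over the $H^u$ side, where the paper uses the Ledrappier--Young Brin--Katok type formula together with the diameter control of Proposition~\ref{p.bound.diam} to establish a $\liminf$ lower bound on $\frac1n H^u(\cP^f_n)$.

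In summary: your outline matches the paper's, but the passage to $g=f^m$ (and the resulting von Neumann step), the uniform lower bound $\alpha$ on the crossing fraction via diameter control and Lusin, and the comparison of $\cP^f_n$ with a genuine dynamical refinement in the entropy upper bound are all essential, and each is absent from your argument.
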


\subsubsection{The partitions $\cP_n^f$}

\begin{figure}[h]
\centering
\includegraphics[scale= 0.5]{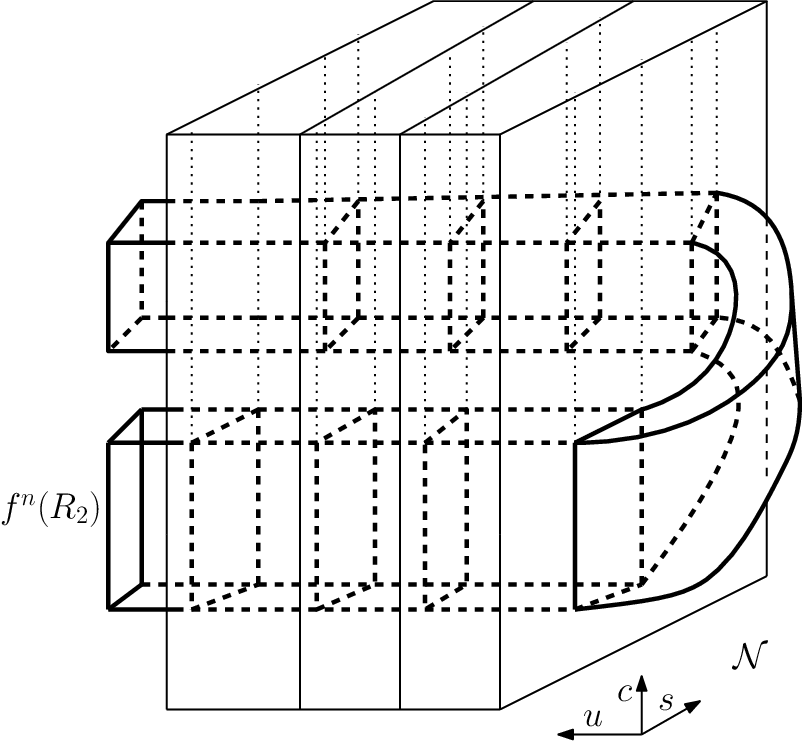}
\caption{construction of $\cP_n^f$.}\label{Qn.partition}
\end{figure}

On the set $\cN$ we can define a partition from $\cP^u$ using the fibered structure
in the same way as we defined $\cP$ on $R_1$:
$$\cP_{\cN}=\{\cup_{y\in P^u} \cW^{cs}_{\cN}(y);P^u\in \cP^u\}.$$

Now we define a partitions $\widetilde{\cQ}_n$ of $R_2$ which refines ${\cQ}_n$.
For each $x\in R_2$:
\begin{itemize}
\item[--] either $f^n(\cQ_n(x))\not \subset \cN$ and $\widetilde{\cQ}_n(x)=\cQ_n(x)$,
\item[--] or $f^n(\cQ_n(x))\subset \cN$ and $\widetilde{\cQ}_n(x):=f^{-n}(\cP_{\cN}(f^n(x)))\cap \cQ_n(x)$.
\end{itemize}
We then define the partition $\cP^f_n:=\vee_{k=0}^n \widetilde \cQ_k$ of $R_2$.
\medskip

\subsubsection{Intermediate steps}
Proposition~\ref{p.entropy.gap0} is a consequence of the next ones.

\begin{proposition}\label{p.gap.persistence}
If $\overline{\Gap}(R_1,\cP)>0$,  then 
$\displaystyle  \liminf \tfrac{1}{n}\overline{\Gap}(R_2,\cP^f_n)>0.$
\end{proposition}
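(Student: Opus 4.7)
The plan is to apply the refinement formula from Corollary~\ref{c.iterates.gap} to the increasing sequence $\cP^f_0\prec\cP^f_1\prec\dots\prec\cP^f_n$ on each cu-plaque of $R_2$, and then integrate over $R_2$. Since $\cP^f_{k+1}=\cP^f_k\vee\widetilde{\cQ}_{k+1}$, this yields
$$\overline{\Gap}(R_2,\cP^f_n)=\overline{\Gap}(R_2,\cP^f_0)+\sum_{k=0}^{n-1}I_k,\qquad I_k:=\int_{R_2}\Gap\bigl(\cP^f_k(x)\cap\cW^{cu}_{R_2}(x),\widetilde{\cQ}_{k+1}\bigr)d\mu(x),$$
so it suffices to show that $\liminf_{n\to\infty}\tfrac{1}{n}\sum_{k=0}^{n-1}I_k>0$.

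The heart of the argument is a pointwise lower bound. Fix $\beta>0$ small, and define $G_k\subset R_2$ as the set of $x$ satisfying: (i) $f^{k+1}(x)\in R_1$; (ii) $x\in M^{k+1}_\beta$, the small-boundary set of Proposition~\ref{p.rho}; and (iii) a \emph{covering property} holds, namely that $f^{k+1}(\cP^f_k(x)\cap\cW^{cu}_{R_2}(x))$ contains $\cW^{cu}_{R_1}(f^{k+1}(x))$ as a u-saturated subset with the c$\times$u product structure preserved by $f^{k+1}$. For $x\in G_k$, the non-separation condition built into $\cQ_{k+1}$ (Lemma~\ref{l.scales}) makes $f^{k+1}$ a measure-preserving bijection on $\cP^f_k(x)\cap\cW^{cu}_{R_2}(x)$ onto its image, pulling back $\cP_\cN$ to the restriction of $\widetilde{\cQ}_{k+1}$. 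Combining this with Proposition~\ref{p.gap.prop}(iv) applied to the u-saturated subset $\cW^{cu}_{R_1}(f^{k+1}(x))$ of the image yields
$$\Gap\bigl(\cP^f_k(x)\cap\cW^{cu}_{R_2}(x),\widetilde{\cQ}_{k+1}\bigr)\;\geq\;c\cdot\Gap(R_1,\cP)(f^{k+1}(x))$$
for a uniform constant $c>0$.

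Summing the pointwise bound over $k$ and using the $f$-invariance of $\mu$:
$$\frac{1}{n}\sum_{k=0}^{n-1}I_k\;\geq\;\frac{c}{n}\int\sum_{k=0}^{n-1}\chi_{G_k}(x)\,\Gap(R_1,\cP)\bigl(f^{k+1}(x)\bigr)\,d\mu(x).$$
Proposition~\ref{p.rho} gives $\mu(M\setminus M^{k+1}_\beta)<\beta$ uniformly in $k$, and condition (iii) will hold on a subset of $R_1$ of measure at least $\mu(R_1)-O(\beta)$ after pulling back by $f^{k+1}$ (via a covering argument based on Lemma~\ref{l.small.product.structure2}). Since $\Gap(R_1,\cP)$ is bounded by $\log|\cP|$ and since $\overline{\Gap}(R_1,\cP)>0$ by hypothesis, choosing $\beta$ small enough makes each summand in the inner sum have $\mu$-integral at least $\tfrac{1}{2}\overline{\Gap}(R_1,\cP)$, independent of $k$. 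The conclusion $\liminf\tfrac{1}{n}\overline{\Gap}(R_2,\cP^f_n)>0$ follows by summation.

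The main obstacle is establishing the covering property (iii) on a set of near-full measure in $R_1$: whenever (i) and (ii) hold, one must show that $f^{k+1}(\cP^f_k(x)\cap\cW^{cu}_{R_2}(x))$ really does contain $\cW^{cu}_{R_1}(f^{k+1}(x))$, without any collapse of the center or failure to fill out the unstable direction. This needs three ingredients to cooperate: the nested scales $4K\ell_1<\ell_2<L_0/(4K)$ together with the quasi-isometric property~\eqref{e.quasi-restated} rule out center collapse (Lemma~\ref{l.return0}); the small-boundary condition on $\cP^u$ ensures that the atom $\cP^f_k(x)$ retains unstable size at least the uniform $\rho$ at time $k+1$; and property (f) of Proposition~\ref{p.def-N}, coming from Lemma~\ref{l.small.product.structure2}, prevents the u-holonomies identifying different $\cW^c$-plaques in the image from exiting the set $Z$. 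Packaging these into a single clean covering lemma is the technical heart of the argument.
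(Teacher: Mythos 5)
Your overall strategy matches the paper's: decompose $\overline{\Gap}(R_2,\cP^f_n)$ via Corollary~\ref{c.iterates.gap}, establish a pointwise lower bound on each increment using the crossing/covering property, and integrate using the small-boundary control and the positive measure of a good set. However, there are two gaps that would sink the argument as written.

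\textbf{Missing passage to a power $g=f^m$.} You work directly with $f$ and single-step increments $\cP^f_k\mapsto\cP^f_{k+1}$. But the crossing property (your item (iii), Proposition~\ref{p.crossing} in the paper) is established by arguing that on $M^{n-1}_\beta$ the backward orbit of a fixed-size unstable ball avoids $\partial^{cs}_{2L}\cP^u$. The key step is that $y'\in\cW^u_{R_1}(g^n(x))$ (an unstable set of size of order $r$) forces $g^{n-1}(y')\in B^u_{\rho'}(g^{n-1}(x))$ with $\rho'<\rho$. This requires that a single backward iterate of $g$ contract $B^u_r$ into $B^u_{\rho'}$, which is exactly why the paper fixes $m$ with $g(B^u_{\rho'}(z))\supset B^u_r(g(z))$ and works with $g=f^m$, afterwards passing back to $f$ using the monotonicity of $\Gap$ under refinement. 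A single backward iterate of $f$ does not, in general, contract $B^u_r$ to $B^u_{\rho'}$, so the covering property can fail at the very first step of your induction.

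\textbf{The uniform constant $c$ is not established.} Your claim $\Gap(\cP^f_k(x)\cap\cW^{cu}_{R_2}(x),\widetilde{\cQ}_{k+1})\geq c\cdot\Gap(R_1,\cP)(f^{k+1}(x))$ implicitly invokes Proposition~\ref{p.gap.prop}(iv), whose constant is the measure ratio $\mu^{cu}\bigl(\cW^{cu}_{R_1}(g^n(x))\bigr)/\mu^{cu}\bigl(g^n(\cP_{n-1}(x)\cap\cW^{cu}_{R_2}(x))\bigr)$. This ratio a priori depends on $x$ and $n$ and could degenerate: the forward image $g^n(\cP_{n-1}(x)\cap\cW^{cu}_{R_2}(x))$ might be large in the $u$-direction. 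The paper handles this with Proposition~\ref{p.bound.diam}, a Poincaré-recurrence argument showing that on a set $Y_n$ with $\mu(R_2\setminus Y_n)<\beta$ this image has $u$-diameter bounded by a fixed $r'$, and then applies Lusin's theorem to pass to a shrunken $R'_1$ on which the ratio is bounded below by a uniform $\alpha>0$. Without these two steps, your ``uniform $c$'' is not justified. (You are also slightly off on the form of the pointwise bound: the paper's lower bound on the good set is the constant $\alpha\eta$, where $\eta$ is the lower bound of $\Gap(R_1,\cP)$ on the subset $R'_1$, rather than $c$ times the transverse entropy at $f^{k+1}(x)$.)

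As a minor remark, after the reduction to $g=f^m$ the measure $\mu$ need not be $g$-ergodic, so the final estimate of $\liminf\frac1n\sum_j\mu(g^{-j}R'_1\cap R'_1)$ requires the Von Neumann argument of Lemma~\ref{l.vonneuman} rather than a direct Birkhoff argument.
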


\begin{proposition}\label{p.entropy.gap}
$ \displaystyle \limsup \tfrac{1}{n}\overline{\Gap}({R_2},\cP^f_n) \leq \mu(R_2)(h(f,\mu)-h^u(f,\mu))$.
\end{proposition}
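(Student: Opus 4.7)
The plan is to compare $\cP_n^f$ with iterates of a fine finite partition of $M$ and decompose $\overline{\Gap}(R_2, \cP_n^f)$ as a difference of conditional entropies that can be controlled by $h(f,\mu)$ and $h^u(f,\mu)$ respectively.

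First, I observe that on each cu-plaque $\cW^{cu}_{R_2}(z)$, both $f^{-k}\cP_\cN$ (since $\cP_\cN$ is cs-saturated, its intersection with a cu-plaque is c-saturated, and this is preserved by $f^{-k}$) and each $\cQ_k$ (by construction a union of center plaques) have c-saturated atoms. Hence $\cP_n^f=\bigvee_{k\le n}\widetilde\cQ_k$ has c-saturated atoms inside each cu-plaque, and via Rokhlin disintegration with the cs$\times$u product structure from Proposition~\ref{p.def-N},
\[
\overline{\Gap}(R_2,\cP_n^f)=\mu(R_2)\bigl(H_{\mu_{R_2}}(\cP_n^f\mid\cW^{cu}_{R_2})-H_{\mu_{R_2}}(\cP_n^f\mid\cW^u_{R_2})\bigr),
\]
where $\mu_{R_2}:=\mu|_{R_2}/\mu(R_2)$ and the conditional entropies are taken with respect to the measurable partitions $\cW^{cu}_{R_2}$ and $\cW^u_{R_2}$ of $R_2$.

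Next, I extend $\cP^u$ to a fine finite partition $\cP$ of $M$ whose restriction to $\cN$ refines $\cP_\cN$ (e.g.\ by adding $M\setminus\cN$ as one atom and taking $\cP$ fine enough). On the ``good'' set $G_n:=\{x\in R_2:f^j(\cW^c_{R_2}(x))\subset\cN,\ 0\le j\le n\}$ one has the identification $\widetilde{\cQ}_k=f^{-k}\cP\vee\cQ_k$, so $\cP_n^f|_{G_n}=\cP^{(n)}\vee\cQ_n$ with $\cP^{(n)}:=\bigvee_{k=0}^n f^{-k}\cP$. Proposition~\ref{p.rho} (combined with the small-boundary of $\cP^u$) ensures $\mu(R_2\setminus G_n)$ is small enough that its entropic contribution is $o(n)$. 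The cu-conditional entropy is then bounded above, via Kolmogorov--Sinai ($h_\mu(f,\cP)\le h(f,\mu)$) and the appropriate normalization for the restricted measure, by $n\,h(f,\mu)/\mu(R_2)+o(n)$. The u-conditional entropy is bounded below by $n\,h^u(f,\mu)/\mu(R_2)-o(n)$ using the Ledrappier--Young characterization of the unstable entropy: for $\cP$ fine enough $h^u_\mu(f,\cP)=h^u(f,\mu)$, and a Kac-type return-time argument translates the global u-entropy rate to the localized version $H_{\mu_{R_2}}(\cdot\mid\cW^u_{R_2})$. Subtracting, multiplying by $\mu(R_2)$ and dividing by $n$ yields the claimed inequality.

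The main obstacle lies in the u-conditional entropy lower bound: the u-plaques of $R_2$ are pieces of bounded length and are not directly a measurable partition subordinate to $\cW^u$ in the Ledrappier--Young sense. One has to show, via a Kac-type return argument (justified by ergodicity of $\mu$) and the non-shrinking center condition, that the u-entropy rate captured by $\cP_n^f$ inside $R_2$ asymptotically matches the global $h^u(f,\mu)$, up to the mass factor $1/\mu(R_2)$. A parallel delicate point is keeping the $\cQ_n$-contribution under control: its atom count can grow exponentially in $n$, but the c-saturation of its atoms within cu-plaques ensures that what it adds to the cu-conditional entropy is matched (up to $o(n)$) by what it adds to the u-conditional entropy, so that it does not spoil the leading-order bound $n\mu(R_2)(h-h^u)$ on the transverse entropy.
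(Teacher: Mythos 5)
Your high-level plan --- bound the cu-conditional entropy above by $n\,h(f,\mu)$ and the u-conditional entropy below by $n\,h^u(f,\mu)$, then subtract --- is indeed the one the paper follows (Lemmas~\ref{l.ineq1} and~\ref{l.ineq2}). But there are two substantive gaps in how you propose to execute each half, and you correctly flag the second one as the main obstacle without actually closing it.

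For the upper bound: your identity $\cP_n^f|_{G_n}=\cP^{(n)}\vee\cQ_n$ is not correct. By definition $\cP_n^f=\bigvee_{k\leq n}\widetilde\cQ_k$, and the $\cQ_k$ are not nested (a pair of center plaques can both escape $\cN$ at time $n$, landing in the same atom of $\cQ_n$, while having separated at an earlier return to $\cN$ and hence lying in different atoms of some $\cQ_k$ with $k<n$), so you cannot replace $\bigvee_k\cQ_k$ by $\cQ_n$. The paper instead builds a single fixed finite partition $\widetilde\cP:=\cP'_\cN\vee\{R_2,M\setminus R_2\}\vee\cA$ (with $\cA$ of small mesh) and proves the clean combinatorial Claim that the dynamical refinement $\widetilde\cP^f_n$ restricted to each $\cW^{cu}_{R_2}(z)$ refines $\cP_n^f$; this claim is exactly where the separation/return structure of the $\cQ_k$ is absorbed, and it requires an argument (using that nearby points in the same cu-plaque that stay together under the partition $\cA$ project onto each other by u-holonomy and therefore enter and leave $\cN$ together). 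Your version does not contain this argument, and the incorrect product formula hides precisely the place where it is needed.

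For the lower bound on the u-conditional entropy: the paper's mechanism is Proposition~\ref{p.bound.diam}, which shows that for $x$ in a large set $Y_n$ the set $f^n(\cP_n^f(x)\cap\cW^u_{R_2}(x))$ has u-diameter at most some fixed $r'$; equivalently, $\cP_n^f(x)\cap\cW^u_{R_2}(x)\subset B^u_{r'}(x,n)$, the $(n,r')$ dynamical unstable ball. Together with the Ledrappier--Young identity $\lim_n -\tfrac{1}{n}\log\mu^u_x(B^u_{r'}(x,n))=h^u(f,\mu)$ this gives $-\log\mu^u_x(\cP_n^f(x))\geq -\log\mu^u_x(B^u_{r'}(x,n))$ pointwise, and the bound follows. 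Your proposed ``Kac-type return-time argument translating the global u-entropy to the localized version'' does not supply this diameter control, which is the actual content needed: without knowing that atoms restricted to u-plaques sit inside dynamical unstable balls of uniform scale, you cannot compare $\mu^u_x(\cP_n^f(x))$ with the quantities in the Ledrappier--Young formula. Finally, the claim that the $\cQ_n$-contribution to the cu-entropy is matched by its contribution to the u-entropy ``up to $o(n)$'' because its atoms are c-saturated is circular: c-saturated partitions inside a cu-plaque are exactly the vertical-strip partitions whose transverse entropy $H-H^u$ is what Proposition~\ref{p.gap.prop}(i) says is nonnegative and what the whole proof is designed to control; it has no reason to be $o(n)$ a priori. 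In the paper's two-lemma structure this issue never arises, because both inequalities are proved against a single dynamical partition and a single dynamical ball, respectively, rather than by trying to cancel the $\cQ_n$ parts against each other.
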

They are proved in the two following sections.

\subsection{Persistence of the transverse entropy}
In this section we assume $\overline{\Gap}(R_1,\cP)>0$ and prove Proposition~\ref{p.gap.persistence}.

\subsubsection{Choice of parameters}\label{ss.choice}  We first select some numbers.
\begin{itemize}
\item[--] We fix $\eta>0$ and a measurable subset $R_1'\subset R_1$ which is a union of sets $\cW^{cu}_{R_1}(z)$
and which satisfies
$$\mu(R'_1)>0 \text{ and } \Gap(R_1,\cP)(z)>\eta \text{ for every } z\in R_1'.$$

\item[--] We fix $0<3\beta<\mu(R_1')^2$ and then $\rho>0$ as given by Proposition~\ref{p.rho}.

\item[--]
There is $0<\rho'<\rho$ as follows.
For any $x$ and $y\in B^{cs}_{L+1}(x)$,
there is a cs-holonomy map
$\fh^{cs}:B^{u}_{\rho'}(x)\to \cW^u(y)$ (a priori not unique) satisfying $\fh^{cs}(x)=y$ and $\fh^{cs}(\zeta)\in B^{cs}_{2L}(\zeta)$ for each $\zeta\in B^{u}_{\rho'}(x)$.

\item[--]
We fix $m\geq 1$ and $g:=f^m$ such that  for every $z\in M$ $$g(B^u_{\rho'}(z))\supset B^u_{r}(g(z)).$$

\item[--] By remark~\ref{r.iterate}, Proposition~\ref{p.rho} is satisfied for $g=f^m$ and $\rho$: the sets

$$M^n_\beta:=\bigg\{x\in M,\bigcup_{j=0}^n g^j(\partial_{2L}\cP^u)\cap B^u_\rho (g^n(x))=\emptyset\bigg\}$$
satisfy $\mu(M\setminus M^n_\beta)<\beta$ for every $n\geq 0$.
\end{itemize}
\medskip

\subsubsection{The partition $\cP_n^g$.}
For $n\geq 0$, we define the partition $\cP^g_{n}$ of $R_2$ by $\cP^g_{n}:=\vee_{j=0}^{n}\widetilde{\cQ}_{n m}$.
Note that the partition $\cP^f_{nm}$ defined for $f$ is thinner than $\cP_{n}^g$ for $g$. 
Hence for proving Proposition~\ref{p.gap.persistence}, it is enough to show that
\begin{equation}\label{e.reduction}
\liminf \tfrac{1}{n}\overline{\Gap}(R_2,\cP^g_{n})>0.
\end{equation}

In the following we only work with the map $g$ and the partitions $\cP_{n}^g$ will be denoted by $\cP_{n}$
in order to keep the notations simpler.

\subsubsection{Properties}
We first see that the iterate $g^{n}(\cP_{n})$ somehow refines $\cP_\cN$.
\begin{proposition}\label{p.separtion}
If $x\in R_2\cap g^{-n}(R_2)$, then $g^n(\cP_n(x))\subset \cP_\cN(g^n(x))$.
\end{proposition}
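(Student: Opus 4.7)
The plan is to unwind the definitions and observe that the result follows essentially by construction of the refinements $\widetilde{\cQ}_{jm}$, together with the good-return property guaranteed by Lemma~\ref{l.return0}. Interpreting the formula $\cP^g_n = \bigvee_{j=0}^n \widetilde{\cQ}_{jm}$, the key remark is that $\cP_n$ is finer than the single factor $\widetilde{\cQ}_{nm}$, so it is enough to prove $g^n(\widetilde{\cQ}_{nm}(x)) \subset \cP_\cN(g^n(x))$, i.e. to show that at the specific time $nm$ (corresponding to $g$-time $n$) we land in the ``second case'' of the definition of $\widetilde{\cQ}$ at that time.

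First I would use the hypothesis $x \in R_2 \cap g^{-n}(R_2) = R_2 \cap f^{-nm}(R_2)$ to invoke Lemma~\ref{l.return0}: the center plaque $\cW^c_{R_2}(x)$ satisfies $f^{nm}(\cW^c_{R_2}(x)) \subset \cW^c_{\cN}(f^{nm}(x)) \subset \cN$. Since by construction $\cQ_{nm}(x)$ is a union of plaques $\cW^c_{R_2}(z')$ and two such plaques are identified only when the images $f^{nm}(\cW^c_{R_2}(z))$ and $f^{nm}(\cW^c_{R_2}(z'))$ are \emph{simultaneously} inside or outside $\cN$, the previous observation forces every $\cW^c_{R_2}(z') \subset \cQ_{nm}(x)$ to satisfy $f^{nm}(\cW^c_{R_2}(z')) \subset \cN$, hence $f^{nm}(\cQ_{nm}(x)) \subset \cN$.

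Consequently $\cQ_{nm}(x)$ falls in the second alternative of the definition of $\widetilde{\cQ}_{nm}$, and therefore
\[
\widetilde{\cQ}_{nm}(x) = f^{-nm}\bigl(\cP_{\cN}(f^{nm}(x))\bigr) \cap \cQ_{nm}(x).
\]
Applying $f^{nm} = g^n$ and using the trivial inclusion $\cP_n(x) \subset \widetilde{\cQ}_{nm}(x)$ yields
\[
g^n(\cP_n(x)) \subset g^n\bigl(\widetilde{\cQ}_{nm}(x)\bigr) \subset \cP_{\cN}(g^n(x)),
\]
which is the desired conclusion.

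The whole argument is essentially a bookkeeping exercise; the only non-trivial input is Lemma~\ref{l.return0}, used to guarantee that the image of the full center plaque $\cW^c_{R_2}(x)$ (and not only of the point $x$) lies in $\cN$ as soon as both endpoints do. In particular I do not expect a serious obstacle here: the delicate work was already carried out in the choice of the scales $\ell_1 < \ell_2 < L$ and in the definition of the pre-partitions $\cQ_k$, $\widetilde{\cQ}_k$. The possible subtlety is only notational, namely to confirm the intended interpretation $\cP^g_n = \bigvee_{j=0}^n \widetilde{\cQ}_{jm}$ rather than a single factor, so that the refinement $\cP_n \prec \widetilde{\cQ}_{nm}$ holds.
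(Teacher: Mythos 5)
Your proof is correct and follows essentially the same route as the paper's (which is considerably terser): invoke Lemma~\ref{l.return0} to get $g^n(\cW^c_{R_2}(x))\subset\cN$, deduce from the definition of the equivalence relation underlying $\cQ_{nm}$ that $g^n(\cQ_{nm}(x))\subset\cN$, note that the second alternative of $\widetilde\cQ_{nm}$ therefore applies, and use $\cP^g_n\prec\widetilde\cQ_{nm}$. Your notational reading of $\cP^g_n=\bigvee_{j=0}^n\widetilde\cQ_{jm}$ is indeed the intended one (the paper's displayed formula has a typo with $n$ in place of $j$).
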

\begin{proof}
By Lemma~\ref{l.return0},
since $g^n(x)\in R_2$ we have $g^n(\cW^c_{R_2}(x))\subset \cN$,
hence $g^n(\cQ_{nm}(x))\subset \cN$.
The definition of $\widetilde \cQ_{nm}$ and $\cP_n^g$ conclude.
\end{proof}

Now we prove that when a point $x\in M^{n-1}_\beta$ comes back to $R_1$ by $g^n$, the image of $\cP_{n-1}(x)$ covers $R_1$.
\begin{proposition}\label{p.crossing}
For $\mu$-almost every point $x\in R_1\cap M^{n-1}_\beta\cap g^{-n}(R_1)$, the set $\cW^{cu}_{R_1}(g^{n}(x))$ is essentially included inside the image $g^{n}(\cP_{n-1}(x))$, i.e.
$\cW^{cu}_{R_1}(g^{n}(x))\setminus g^{n}(\cP_{n-1}(x))$ has zero $\mu^{cu}_{g^n(x)}$-measure.
\end{proposition}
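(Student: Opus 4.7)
The strategy is to prove the set-theoretic inclusion $g^{-n}(\cW^{cu}_{R_1}(g^n(x))) \subset \cP_{n-1}(x)$, up to a $\mu^{cu}_{g^n(x)}$-null subset (which is equivalent to the stated conclusion since $g^n$ is a bijection). Unpacking $\cP_{n-1}=\vee_{j=0}^{n-1}\widetilde{\cQ}_{jm}$, I need to show that for every $w\in \cW^{cu}_{R_1}(g^n(x))$ the point $z:=g^{-n}(w)$ lies in $\widetilde{\cQ}_{jm}(x)$ for each $j\le n-1$. This splits into two requirements: (a) the plaques $\cW^c_{R_2}(z)$ and $\cW^c_{R_2}(x)$ lie in the same $\cQ_{jm}$-atom (a non-separation condition in the unstable direction), and (b) whenever $g^{jm}(\cQ_{jm}(x))\subset \cN$, the images $g^{jm}(z)$ and $g^{jm}(x)$ must additionally lie in a common vertical strip $\cP_\cN(g^j(x))$.

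I would first dispatch (b) using $M^{n-1}_\beta$. By the choice of $\rho'$ and $g=f^m$, the preimage $g^{-1}(B^u_r(g^n(x)))$ sits inside $B^u_{\rho'}(g^{n-1}(x))\subset B^u_\rho(g^{n-1}(x))$, so the iterated preimages $g^{-k}(B^u_r(g^n(x)))$ for $k=1,\ldots,n$ all miss $\partial^{cs}_{2L}\cP^u$, by definition of $M^{n-1}_\beta$ applied at time $n-1-(k-1)$. Each such connected preimage cs-projects continuously to $D^u$, and the projection cannot cross $\partial\cP^u$; hence it lies in a single $\cP^u$-atom, namely the one containing the cs-projection of $g^{n-k}(x)$. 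This is exactly condition (b).

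For (a), I would use the quasi-isometric property combined with Lemma~\ref{l.scales}. Writing $w=\fh^u_{y',y''}(v)$ with $y'\in \cW^u_{R_1}(g^n(x))$ and $v\in \cW^c_{R_1}(y')$ in the c$\times$u-product structure of $\cW^{cu}_{R_1}(g^n(x))$, the quasi-isometric bound gives $d^c(g^{-n}(w),g^{-n}(y'))\le K\ell_1+c_0<\ell_2$, so $g^{-n}(w)$ belongs to the same $\cW^c_{R_2}$-plaque as $g^{-n}(y')$. The remaining non-separation at time $jm$ reduces to controlling the unstable distance between the $g^{jm}$-image of that plaque and $\cW^c_{R_2}(g^{jm-n}(x))$; but this unstable distance is already bounded by $2\delta$ at the terminal time $j=n$ (since both points sit inside $\cW^u_{R_1}$), hence remains smaller at earlier times by backward unstable contraction, so the hypothesis of Lemma~\ref{l.scales} applies.

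The main obstacle is the case dichotomy in the definition of $\widetilde{\cQ}_{jm}$. When $g^{jm}(\cQ_{jm}(x))\not\subset \cN$ requirement (b) is vacuous and we only need (a), but we must still check that $z$ and $x$ fall in the same case, i.e.\ that $g^{jm}(\cW^c_{R_2}(z))\subset \cN$ if and only if $g^{jm}(\cW^c_{R_2}(x))\subset \cN$. This follows from the small unstable and center separation between $z$ and $x$ at every time up to $n$, but needs to be argued uniformly in $j$. The ``$\mu$-a.e.'' and ``essentially included'' caveats absorb the $\mu^{cu}_{g^n(x)}$-null set of points landing on $\partial\cP^u$ or on the boundaries of $R_1,R_2$ after cs-projection, which is a countable union of null sets that can be discarded without affecting the measure-theoretic conclusion.
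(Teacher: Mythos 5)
Your overall strategy mirrors the paper's (reduce to the unstable factor $y'$ via the c$\times$u-product structure of $\cW^{cu}_{R_1}(g^n(x))$, control cs-projections of unstable arcs via $M^{n-1}_\beta$, then handle the returns to $\cN$ atom by atom). However, you have correctly identified the genuine difficulty — the ``case dichotomy'' of whether $g^{jm}\cW^c_{R_2}(z)$ and $g^{jm}\cW^c_{R_2}(x)$ both return to $\cN$ or neither does — and then dismissed it with ``this follows from the small unstable and center separation.'' That step does not follow, and this is a real gap.

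The set $Z$ is only a measurable subset of $\cW^{su}_{\delta,\delta}(x_0)$ (defined by the isolation condition $\operatorname{Card}(B^c_{10L}(z)\cap\cW^{su}_{r,r}(x_0))=1$), not an open set. Consequently, two points whose iterates stay $u$-close can very well land in center plaques over base points of $\cW^{su}_{\delta,\delta}(x_0)$ one of which is in $Z$ and the other not; proximity alone cannot decide $Z$-membership and hence cannot decide membership in $\cN$. The paper resolves this precisely through Lemma~\ref{l.small.product.structure2}, packaged in Proposition~\ref{p.def-N}(f): after discarding the complement of the invariant full-measure set $\Omega$, one observes that if the cs-projection of $g^k(y')$ landed in $D^u$, then $g^k(y')\in B^c_{L+1}(z')$ for some $z'\in\cW^u_{2\delta}(z_x)\cap\cW^{su}_{\delta,\delta}(x_0)$, and since $g^k(y)\in\Omega\cap B^c_{2L}(z')$ (using Lemma~\ref{l.return0}), Property~(f) forces $z'\in Z$; the $\fh^u$-invariance of $\cN$ then yields $g^k\cW^c_{R_2}(y')\subset\cN$, contradicting the case hypothesis. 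Thus the cs-projection of $g^k(y')$ must exit $D^u$, so the projected arc crosses $\partial D^u\subset\partial^{cs}_{2L}\cP^u$, contradicting $x\in M^{n-1}_\beta$.

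Your proposal never invokes $\Omega$ or Proposition~\ref{p.def-N}(f), and your final remark that the ``$\mu$-a.e.'' caveat merely absorbs boundary-type null sets misidentifies the nature of the exceptional set: the set being removed is $M\setminus\Omega$, whose full measure comes from Theorem~\ref{t.prod.struct}, and it is the mechanism that turns ``$g^k(y')$ projects into $D^u$'' into ``$z'\in Z$.'' Without this, the argument for the case where exactly one of the two center plaques returns to $\cN$ — which you flagged as the main obstacle — is not closed.
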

\begin{proof}
Let us assume by contradiction that this is false and let us take $y$ such that $g^n(y)\in \cW^{cu}_{R_1}(g^{n}(x))\setminus g^{n}(\cP_{n-1}(x))$. We may assume that $x$ and $y$ belong to the full measure set $\Omega$ introduced in
Proposition~\ref{p.def-N}(f).
As the set $\cW^{cu}_{R_1}(g^n(x))$ has a c$\times$u-product structure,
there exists $y'$ such that $g^n(y')\in \cW^{u}_{R_1}(g^{n}(x))\cap \cW^{c}_{R_1}(g^{n}(y))$.
By Lemma~\ref{l.return0}, the set $\cW^{cu}_{R_1}(g^{n}(x))\setminus g^{n}(\cP_{n-1}(x))$
is saturated by plaques $\cW^c_{R_1}(z)$, hence $g^n(y')\notin g^{n}(\cP_{n-1}(x))$.
By definition of $\cP_{n-1}$, there exists $0\leq k<n$ such that $\widetilde{Q}_{m k}(y')\neq \widetilde{Q}_{m k}(x)$.
Two cases have to be considered.
\medskip

\paragraph{\it First case:
$g^k\cW^c_{R_2}(x)$ and $g^k\cW^c_{R_2}(y')$ are contained in $\cN$, but in different element of $\cP_\cN$.}
Let $z$ be the intersection point between
$\cW^{cs}_{\cN}(g^k(x))$ and $\cW^u_{\cN}(x_0)$.
By (g) we have $z\in B^{cs}_{L+1}(g^k(x))$. We can thus consider a cs-holonomy map
$\fh^{cs}$ between $B^u_{\rho'}(g^k(x))$ and a subset of $\cW^u_{loc}(x_0)$ satisfying
$\fh^{cs}(\zeta)\in B^{cs}_{2L}(\zeta)$.

Since $g^{n}(y')\in \cW^u_{R_1}(g^n(x))$,
the choice of $g$ gives $g^{n-1}(y')\in B^u_{\rho'}(g^{n-1}(x))$.
Let us consider an arc $\gamma$ joining $g^k(x),g^k(y')$ in $g^{k-n+1}B^u_{\rho'}(g^{n-1}(x))$.
It is contained in $B^u_{\rho'}(g^k(x))$, hence one can considers its image by $\fh^{cs}$.

Since $g^k(x),g^k(y')$ are not contained in a same element of $\widetilde Q_{mk}$,
the image of $\gamma$ by $\fh^{cs}$ meets the boundary of $\cP^u$ inside $\cW^u_{loc}(x_0)$.
We have thus proved that $g^{-j} B^u_{\rho}(g^{n-1}(x))$ intersects
$\partial_{2L}^{cs}\cP^u$, with $j=n-1-k$.
This is a contradiction since $x$ belongs to $M^{n-1}_\beta$.
\medskip

\paragraph{\it Second case:
$g^k\cW^c_{R_2}(x)$ or $g^k\cW^c_{R_2}(y')$ is contained in $\cN$, but not both.}
Let us assume for instance that $g^k(\cW^{c}_{R_2}(x))\subset\cN$
(the other situation is similar) and let $z_x\in Z$ be the point such that
$g^k(x)\in B^c_L(z_x)$.

The argument is similar to the first case.
We consider a cs-holonomy map
$\fh^{cs}$, satisfying $\fh^{cs}(\zeta)\in B^{cs}_{2L}(\zeta)$, between $B^u_{\rho'}(g^k(x))$ and a subset of $\cW^u_{loc}(x_0)$:
it may be obtained by first projecting by center holonomy on $\cW^u_{loc}(z_x)$,
and then by a local cs-holonomy on $\cW^u_{loc}(x_0)$.
In particular any point $\zeta\in B^u_{\rho'}(g^k(x))$ whose projection
$\fh^{cs}(\zeta)$ belongs to $D^u=\cW^u_\delta(x_0)$ is contained in
a center plaque $B^c_{L+1}(z)$ for some $z\in \cW^u_{2\delta}(z_x)\cap \cW^{su}_{\delta,\delta}(x_0)$.
There exists an arc $\gamma\subset B^u_{\rho'}(g^k(x))$ joining $g^k(x)$ and $g^k(y')$.
By construction, the image $\fh^{cs}(g^k(x))$ belongs to $D^u$.

We claim that the image $\fh^{cs}(g^k(y'))$ does not belong to $D^u$.
If the claim does not hold, $g^k(y')$ would be contained in some set $B^c_{L+1}(z')$, with $z'\in \cW^{su}_{\delta,\delta}(x_0)\cap \cW^u_{2\delta}(z_x)$. Note that $g^k(y)$ belongs to $B^c_{L-1}(g^k(y'))$, by Lemma~\ref{l.return0}.
Consequently $B^c_{2L}(z')$ contains $g^k(y)\in \Omega$.
By Proposition~\ref{p.def-N}(f), this implies $z'\in Z$.
The set $\cW^c_{R_2}(y')$ is the image of $\cW^c_{R_2}(x)$ by unstable holonomy
(which is uniquely defined). Hence $g^k\cW^c_{R_2}(y')$ is the image of $g^k\cW^c_{R_2}(x)$
by unstable holonomy. Since $g^k\cW^c_{R_2}(x)\subset \cN$ and since $\cN$ is invariant by the unstable holonomy
$\fh^u_{z_x,z'}$, we also have
$g^k\cW^c_{R_2}(y')\subset \cN$, a contradiction. The claim holds.

Since $\fh^{cs}(g^k(y'))$ does not belong to $D^u$,
the projection $\fh^{cs}(\gamma)$ meets the boundary of $D^u$,
and $g^{-j} B^u_{\rho}(g^{n-1}(x))$ intersects
$\partial_{2L}^{cs}D^u$, with $j=n-1-k$.
This is a contradiction since $x$ belongs to $M^{n-1}_\beta$.
\end{proof}

The following controls the diameter of the iterates of the partitions $\cP_n$.
\begin{proposition}\label{p.bound.diam}
For any $\varepsilon_0>0$ there exists $r'>0$ (large) and for every $n\geq 0$ there exists $Y_n\subset R_2$ satisfying $\mu(R_2\setminus Y_n)<\varepsilon_0$ such that if $x\in Y_n$ then $g^n(\cP_n(x)\cap \cW^u_{R_2}(x))$ has u-diameter smaller than $r'$.
\end{proposition}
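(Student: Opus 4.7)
The plan is to combine the refinement structure $\cP_n=\bigvee_{k=0}^n\widetilde\cQ_{mk}$ with a return-time argument for $\mu$ under $g=f^m$. The key observation is the following: if $j\in\{0,\dots,n\}$ satisfies $g^j(x)\in R_2$, then by Lemma~\ref{l.return0} one has $g^j(\cW^c_{R_2}(x))\subset\cN$, so the definition of $\widetilde\cQ_{mj}$ forces $\widetilde\cQ_{mj}(x)\subset g^{-j}(\cP_\cN(g^j(x)))$; hence for every $y\in\cP_n(x)\cap\cW^u_{R_2}(x)$, the points $g^j(y)$ and $g^j(x)$ lie in a common atom of $\cP_\cN$. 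Since the atoms of $\cP_\cN$ are cs-saturated over pieces of $\cP^u\subset D^u=B^u_\delta(x_0)$, the intersection of such an atom with any unstable slice $\cW^u_\cN$ is the u-holonomy image of a piece of $\cP^u$ across center plaques of size $\leq 2L$, hence has u-diameter bounded by a uniform constant $D>0$; thus $d^u(g^j(x),g^j(y))\leq D$.

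To produce such an index $j$ close to $n$ for most $x\in R_2$, I would introduce
$$\psi(w):=\inf\{k\geq 0:\ g^{-k}(w)\in R_2\},$$
which is $\mu$-a.e.\ finite by ergodicity (since $\mu(R_2)>0$). Choose $N=N(\varepsilon_0)$ with $\mu(\{\psi>N\})<\varepsilon_0$, and set
$$Y_n:=R_2\cap g^{-n}(\{\psi\leq N\}).$$
The $g$-invariance of $\mu$ gives $\mu(R_2\setminus Y_n)\leq \mu(\{\psi>N\})<\varepsilon_0$, uniformly in $n$. For each $x\in Y_n$ one can pick $k\in\{0,\dots,\min(n,N)\}$ with $g^{n-k}(x)\in R_2$: when $n\leq N$ take $k=n$ and $j:=0$ (which works because $x\in R_2$), and otherwise use the definition of $Y_n$. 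Setting $j:=n-k$, the observation above applies and yields $d^u(g^j(x),g^j(y))\leq D$ for every $y\in\cP_n(x)\cap\cW^u_{R_2}(x)$.

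Finally, since $n-j=k\leq N$, the iterate $g^{n-j}$ expands unstable distances by at most a uniform factor
$$C_N:=\Bigl(\sup_{w\in M}\|Dg(w)|_{E^u}\|\Bigr)^N,$$
so $d^u(g^n(x),g^n(y))\leq C_N\cdot D=:r'$, which is the desired bound. The main, though rather mild, obstacle is to pin down the uniform u-diameter constant $D$: it reduces to the bounded distortion of u-holonomies between $\cW^u_\cN(z)$ and the reference disc $D^u=B^u_\delta(x_0)$ across center plaques of size $\leq 2L$, which follows from Proposition~\ref{p.def-N}(h) together with the construction of $\cN$.
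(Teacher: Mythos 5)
Your argument has the same skeleton as the paper's: find, for most $x\in R_2$, an index $j\in[n-N,n]$ with $g^j(x)\in R_2$, use the partition structure to bound the u-diameter of $g^j(\cP_n(x)\cap\cW^u_{R_2}(x))$ at that time, and then push forward the remaining $n-j\leq N$ steps with a uniform expansion bound $\|Dg\|^N$. The paper does exactly this through its Lemma~\ref{l.return}, and your "key observation'' is just Proposition~\ref{p.separtion} applied at the intermediate time $j$ instead of at $n$, with the uniform constant $D$ playing the role the paper assigns to $r$ via Proposition~\ref{p.def-N}(h).

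There is one genuine gap in your return-time step. You assert that $\psi(w):=\inf\{k\geq 0:\ g^{-k}(w)\in R_2\}$ is $\mu$-a.e.\ finite ``by ergodicity,'' but $\mu$ is only assumed $f$-ergodic, and $g=f^m$ need not preserve ergodicity: there can be a $g$-invariant set of positive measure disjoint from $R_2$, on which $\psi\equiv\infty$, so $\mu(\{\psi>N\})$ does not go to $0$ and your choice of $N$ fails. Your bound $\mu(R_2\setminus Y_n)\leq\mu(\{\psi>N\})$ is then useless. This is precisely the point the paper takes care of in Lemma~\ref{l.return}, which passes through the ergodic decomposition of $\mu$ under $g$. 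The repair is short: let $M'$ be the union of the $g$-ergodic components that give positive mass to $R_2$; then $M'$ is $g$-invariant, $R_2\subset M'\ (\mathrm{mod}\;0)$, $\psi$ is a.e.\ finite on $M'$, and since $R_2\cap g^{-n}\{\psi>N\}\subset g^{-n}(M'\cap\{\psi>N\})\ (\mathrm{mod}\;0)$, one gets $\mu(R_2\setminus Y_n)\leq\mu(M'\cap\{\psi>N\})$, which can be made $<\varepsilon_0$ uniformly in $n$. With that patch, the rest of your argument—the $n\leq N$ case via $j=0$, the uniform constant $D$ from the $\cP_\cN$-structure inside $\cN$, and the final expansion bound—is sound and matches the published proof.
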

The proof uses the following lemma.
\begin{lemma}\label{l.return}
Let $g:M\to M$ be a measurable transformation preserving a probability measure $\mu$, and let $A$ be a positive measure subset. We define $$
A_N^n=\lbrace x\in A,\text{ such that }g^j(x)\notin A\text{ for }n-N\leq j\leq n\}.
$$
Then for every $\varepsilon>0$ there is $N>0$ such that $\mu(A_N^n)<\varepsilon$ for every $n>N$. 
\end{lemma}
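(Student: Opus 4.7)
The plan is to reduce the bound to the integrability of the first-return function to $A$, using the Kakutani tower representation. In the paper's setting $g = f^m$ is a diffeomorphism, in particular invertible, which is the only property of $g$ I will use beyond preservation of $\mu$.

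First I would invoke Poincar\'e recurrence to get that the first-return time
$$r(x) := \min\{k \ge 1 : g^k(x) \in A\}$$
is a.e.\ finite on $A$, and Kac's lemma to get $r \in L^1(A,\mu)$ with $\int_A r\, d\mu \le 1$. Writing $0 = t_0(x) < t_1(x) < \dots$ for the successive visit times of $x \in A$ to $A$, with $t_{k+1}(x) - t_k(x) = r(g^{t_k(x)}(x))$, the hypothesis $x \in A_N^n$ says that no $t_k(x)$ falls in $[n-N,n]$, so there is a unique $k$ with $t_k(x) \le n - N - 1$ and $t_{k+1}(x) \ge n + 1$. Setting $y := g^{t_k(x)}(x) \in A$ and $j := n - t_k(x)$ then gives $j \ge N+1$ and $r(y) = t_{k+1}(x) - t_k(x) > j$.

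Next I would introduce the tower $T := \{(y,j) : y \in A,\ 0 \le j < r(y)\}$, equipped with the measure $\nu$ defined by $\nu(F) := \int_A \#\{j : (y,j) \in F\}\, d\mu(y)$, and the projection $\Phi(y,j) := g^j(y)$. Since $g$ is invertible, $\Phi$ is injective and $\Phi_*\nu$ coincides with $\mu$ on the forward saturation of $A$. The key observation from the previous paragraph then says that $g^n(x) = \Phi(y,j)$ with $(y,j)$ in the high part $F_N := \{(y,j) \in T : j > N\}$ of the tower, so $A_N^n \subset g^{-n}(\Phi(F_N))$. Using the $g$-invariance of $\mu$,
$$\mu(A_N^n) \;\le\; \mu(\Phi(F_N)) \;=\; \nu(F_N) \;=\; \int_A \max\bigl(0,\,r(y) - N - 1\bigr)\, d\mu(y).$$
The right-hand side is independent of $n$, and tends to $0$ as $N \to \infty$ by dominated convergence applied to the integrable function $r$. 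Choosing $N$ large enough gives the lemma.

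The argument has no real obstacle; the only care needed is to verify that the assignment $x \mapsto (y,j)$ described above correctly identifies $g^n(x)$ with a point of the tower lying above level $N$, and to use invertibility of $g$ so that $\Phi$ is a bijection (otherwise the preimage counting could inflate the bound). For the non-ergodic case one disintegrates $\mu$ into ergodic components and applies the above to each component; since $r$ is defined pointwise and globally $L^1$, the estimate is preserved after integrating over components.
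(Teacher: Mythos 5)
Your proof is correct, but it takes a genuinely different and somewhat heavier route than the paper. The paper's proof is shorter: it looks at the \emph{forward} first-hitting time $\Psi(z)=\min\{j\ge 0: g^j(z)\in A\}$, uses Poincar\'e recurrence (via ergodic decomposition) to make $\Psi$ finite a.e.\ on the part of the space that sees $A$, and observes the exact identity $A_N^n = A\cap g^{-(n-N)}(\{\Psi\ge N+1\})$, which together with invariance gives $\mu(A_N^n)\le\mu(\{\Psi\ge N+1\})\to 0$. You instead look at the \emph{backward} gap (time since the last visit) via the Kakutani tower over $A$ with first-return function $r$, and bound $\mu(A_N^n)$ by the measure $\nu(F_N)=\int_A\max(0,r-N-1)\,d\mu$ of the high part of the tower, which tends to $0$ by Kac plus dominated convergence. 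The two quantities you compare — $\mu(\{\Psi\ge N+1\})$ and $\nu(F_N)$ — are in fact equal in the invertible case, so you are computing the same thing from the opposite direction.

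Two remarks. First, your argument genuinely uses invertibility of $g$ (to make $\Phi$ injective and $\Phi_*\nu=\mu$ on the saturation), which you correctly flag; the lemma as stated allows a general measure-preserving $g$, and the paper's proof works in that generality because it never passes through the tower. Since in the application $g=f^m$ is a diffeomorphism, this is not an obstacle for the paper, but it does make the paper's proof the stronger one as a standalone lemma. Second, your closing remark about disintegrating into ergodic components is unnecessary: Kac's lemma $\int_A r\,d\mu\le 1$ and Poincar\'e recurrence both hold for a general (not necessarily ergodic) finite measure-preserving system, so your main argument already covers the non-ergodic case without any extra step.
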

\begin{proof}
Let us consider the ergodic decomposition of $\mu=\int \mu_e d\widetilde{\mu}(e)$, where each $\mu_e$ is ergodic and gives total measure to disjoint sets $M_e$.

Let $E=\lbrace e,\text{such that }\mu_e(A)>0\rbrace$ and $M'=\cup_{e\in E} M_e$.
Since $\mu(A)>0$,
we have $\mu(M')>0$ and $g^{-n}(A)\subset_{\text{mod }0} M'$ for every $n\in \integer$.

By ergodicity, for each $e\in E$ and for $\mu_e$ almost every $x\in M_e$, there exists a smallest $N_e(x)\geq 0$ such that $g^{N_e(x)}(x)\in A$. Then we can define $\Psi:M'\to \natural$ by $\Psi(x)=N_e(x)$ if $x\in M_e$.

Now take $M_N=\lbrace x,\Psi(x)\geq N+1\rbrace$ and take $N$ sufficiently large such that $\mu(M_N)\leq \varepsilon$, then $A_N^n=A\cap g^{-n+N}M_N$.
\end{proof}
\begin{proof}[Proof of Proposition~\ref{p.bound.diam}]
To prove the proposition,
it is enough to bound uniformly the u-diameter of $g^n(\cP_n(x)\cap \cW^u_{R_2}(x))$ for $n$ large enough.
Let $A=R_2$ and from Lemma~\ref{l.return}, take $N$ large such that
$\mu(A_N^n)\leq \varepsilon_0$ for any $n\geq N$.
Let $Y_n:=A\setminus A^n_N$ so that
$\mu(R_2\setminus Y_n)<\varepsilon_0$ as required.

Let us choose any $x\in Y_n$. By definition of $Y_n$ and $A^n_N$, there exists $n-N\leq j \leq n$ such that $g^j(x)\in A$. By Proposition~\ref{p.separtion}, $g^j(\cP_j(x))\subset \cP_\cN(g^j(x))\subset \cN$; hence by (h) the u-diameter of $g^j(\cP_j(x))$ is smaller than~$r$. 

So the diameter of $g^n(\cP_j(x)\cap \cW^u_{R_2}(x))$ along $\cW^u(g^n(x))$ is bounded by $\norm{Dg^{n-j}}r\leq r':=\norm{D g}^N r$ for $n$ large enough.
\end{proof}
\begin{remark}\label{r.bound.diam}
The proofs of Propositions~\ref{p.separtion}
and~\ref{p.bound.diam} do not use the properties of the diffeomorphism $g$. Hence they are
also satisfied by the diffeomorphism $f$ and its associated partitions $\cP_n^f$.
However the proof of Proposition~\ref{p.crossing} uses strongly our definition of $g$.
\end{remark}

\subsubsection{Proof of Proposition~\ref{p.gap.persistence}}
The idea for proving~\eqref{e.reduction} is to decompose $\Gap(R_2,\cP_n)(z)$ by using corollary~\ref{c.iterates.gap} and to estimate
$\Gap(R_2\cap \cP_{n-1}(z),\cP_n)(z)$ by comparing it with $\Gap(R_1,\cP)(g^n(z))$ when $g^n(z)\in R_1$.
This is done by using
inclusions (up to a zero measure set) $$\cW^{cu}_{R_1}(g^n(z))\overset{\mu}\subset g^n\cP_{n-1}(z)\subset B^{cu}_{L,r'}(g^n(z)).$$
We recall that $B^{cu}_{L,r'}(g^n(z))$ has been defined in section~\ref{ss.ball}.

We have fixed $\beta>0$ in section~\ref{ss.choice}.
We apply Proposition~\ref{p.bound.diam} with $\varepsilon_0:=\beta$ to find $r'>0$ and $Y_n$ for each $n\in \natural$.
By Lusin theorem, one can replace the set $R'_1$ (chosen at section~\ref{ss.choice}) by a subset (with measure arbitrarily close to $\mu(R'_1)$) where
$z\mapsto \mu^{cu}_{R_1,z}(B^{cu}_{L,r'}(z))$ varies continuously.
Note that $z\mapsto \mu^{cu}_{R_1,z}(R'_1)$ is constant on each cu-plaque $\cW^{cu}_{R_1}(z)$.
We can thus replace $R'_1$ by a subset with measure arbitrarily close to $\mu(R'_1)$
where $z\mapsto \mu^{cu}_{R_1,z}(R'_1)$ varies continuously.
Consequently we can find $\alpha>0$ such that for every $z\in R_1'$,
$$\mu^{cu}_z(R_1')>\alpha \mu^{cu}_z(B^{cu}_{L,r'}(z)).$$
Since the new set $R'_1$ has been obtained by removing a small measure set of cu-plaques $\cW^{cu}_{R_1}(z)$,
the condition $\Gap(R_1,\cP)(z)>\eta$ in section~\ref{ss.choice} is still satisfied.

Let $z$ in a full measure subset of $R_1\cap M^{n-1}_\beta\cap Y_{n}$ such that $g^{n}(z)\in R_1'$.
By Proposition~\ref{p.crossing}, $\cW^{cu}_{R_1}(g^{n}(z))$ is essentially included in $g^{n}(\cP_{n-1}(z))$.
By Proposition~\ref{p.gap.prop} items (iv)
$$
\begin{aligned}
\Gap\big(g^{n}(\cP_{n-1}(z)\cap &\cW^{cu}_{R_2}(z)),g^{n}\cP_{n}\big)\geq\\
&\frac{\mu^{cu}(\cW^{cu}_{R_1}(g^{n}(z)))}{\mu^{cu}(g^{n}(\cP_{n-1}(z)\cap\cW^{cu}_{R_2}(z)))}\Gap(R_1,g^{n}\cP_{n})(g^{n}(z)).
\end{aligned}
$$

Proposition~\ref{p.bound.diam} and Lemma~\ref{l.return0} imply that $g^{n}(\cP_{n-1}(z)\cap \cW^{cu}_{R_2}(z))$ is included in $B^{cu}_{L,r'}(g^n(z))$.
With the choice of $\alpha$ one thus gets
$$
\Gap(g^{n}(\cP_{n-1}(z)\cap \cW^{cu}_{R_2}(z)),g^{n}\cP_{n})>
\alpha\Gap(R_1,g^{n}\cP_{n})(g^{n}(z)).
$$
 
By Proposition~\ref{p.separtion}, the restriction of $g^{n}\cP_{n}$ to $R_1$ is finer than $\cP$, so by Proposition~\ref{p.gap.prop} item (iii)
and the choice of $\eta$ in section~\ref{ss.choice} we get 
\begin{equation}\label{eq.delta}
\Gap(g^{n}(\cP_{n-1}(z)\cap \cW^{cu}_{R_2}(z)),g^{n}\cP_{n})>\alpha\eta.
\end{equation} 
Let $B_n=R_1\cap M^{n-1}_\beta\cap Y_n$. By corollary~\ref{c.iterates.gap}, the $\mu$-invariance, \eqref{eq.delta}, at any~$x$,
$$
\begin{aligned}
\Gap(&R_2,\cP_n)(x)\\
&\geq \int \sum_{j=1}^{n}  \sum_{P_{j-1}\in \cP_{j-1}}\chi_{R_1'}(g^j z)\chi_{B_{j}}(z)\chi_{P_{j-1}}(z)\Gap(P_{j-1}
\cP_{j})   d\mu^{cu}_{R_2,x}(z)\\
&\geq \alpha\eta\int \sum_{j=1}^{n} \chi_{R_1'}(g^j z)\chi_{B_{j}}(z) d\mu^{cu}_{R_2,x}(z)
\geq \alpha\eta \sum_{j=1}^{n}\mu^{cu}_{R_2,x}(g^{-j}R_1'\cap B_{j}).
\end{aligned}
$$
Integrating on $R_2$ and using the measure estimates on $Y_j$ and $M^j_\beta$, we get
\begin{equation}\label{eq.ineq.gap}
\tfrac{1}{n}\overline{\Gap}(R_2,\cP_n)\geq \alpha\eta\bigg(\frac{1}{n}\sum_{j=0}^{n-1}\mu(g^{-j}R_1'\cap R_1')-2\beta\bigg).
\end{equation}

Since $\mu$ is not necessarily ergodic for $g$ we will need the following lemma.
\begin{lemma}\label{l.vonneuman}
Let $g:M\to M$ be a measurable map, $\mu$ be an invariant measure and $A\subset M$. Then
$\lim_{n\to +\infty}\frac{1}{n}\sum_{j=0}^{n-1}\mu(g^{-j}A\cap A)\geq \mu(A)^2$.
\end{lemma}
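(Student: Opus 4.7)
The plan is to prove this via the von Neumann mean ergodic theorem applied to the indicator function $\chi_A \in L^2(\mu)$, viewed as a vector acted upon by the Koopman operator $U:\varphi\mapsto \varphi\circ g$, which is a linear isometry of $L^2(\mu)$ because $\mu$ is $g$-invariant. One may assume $\mu$ is a probability (otherwise normalize, noting the inequality is homogeneous of degree $2$ in the measure).

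First I would rewrite the quantity under consideration as an $L^2$ inner product: by $g$-invariance of $\mu$,
\[
\mu(g^{-j}A\cap A)=\int \chi_A\cdot (\chi_A\circ g^j)\,d\mu=\langle \chi_A,U^j\chi_A\rangle,
\]
so that
\[
\frac{1}{n}\sum_{j=0}^{n-1}\mu(g^{-j}A\cap A)=\Big\langle \chi_A,\;\frac{1}{n}\sum_{j=0}^{n-1}U^j\chi_A\Big\rangle.
\]

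Next, the mean ergodic theorem asserts that $\tfrac{1}{n}\sum_{j=0}^{n-1}U^j\chi_A$ converges in $L^2(\mu)$ to $P\chi_A$, where $P$ denotes the orthogonal projection onto the closed subspace of $U$-invariant functions. Passing to the limit, and using that $P$ is self-adjoint with $P^2=P$, one obtains
\[
\lim_{n\to\infty}\frac{1}{n}\sum_{j=0}^{n-1}\mu(g^{-j}A\cap A)=\langle \chi_A,P\chi_A\rangle=\langle P\chi_A,P\chi_A\rangle=\|P\chi_A\|_{L^2}^2.
\]

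Finally, since constants are $U$-invariant, $P$ fixes them and is the identity on them, hence $\int P\chi_A\,d\mu=\int \chi_A\,d\mu=\mu(A)$. The Cauchy--Schwarz inequality (or equivalently Jensen's inequality for the convex function $t\mapsto t^2$ applied to the probability measure $\mu$) then yields
\[
\|P\chi_A\|_{L^2}^2=\int (P\chi_A)^2\,d\mu\geq \Big(\int P\chi_A\,d\mu\Big)^2=\mu(A)^2,
\]
which is the desired bound. There is essentially no obstacle here; the statement is a textbook consequence of the mean ergodic theorem, and the only thing worth stressing is that the limit in fact exists (so "$\liminf$" in the formula could be replaced by "$\lim$"), which justifies that applying this lemma inside \eqref{eq.ineq.gap} allows one to take a genuine $\liminf$ of the left-hand side.
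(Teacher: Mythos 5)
Your proof is correct and follows essentially the same route as the paper: both invoke the von Neumann mean ergodic theorem to express the limit as $\langle \chi_A, P\chi_A\rangle = \|P\chi_A\|_{L^2}^2$, and then apply Cauchy--Schwarz (against the constant function $1$) to obtain the lower bound $\mu(A)^2$. Your write-up spells out a few of the paper's compressed steps (e.g.\ that $P$ fixes constants, so $\int P\chi_A\,d\mu=\mu(A)$), but there is no substantive difference in strategy.
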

\begin{proof}
By Von Neumann ergodic theorem 
$$
\lim_{n\to +\infty}\frac{1}{n}\sum_{j=0}^{n-1}\mu(g^{-j}A\cap A)=\int \chi_{A}P\chi_{A}d\mu,
$$
where $P$ is the orthogonal projection on invariant functions in $L^2(\mu)$. Then 
$\langle \chi_{A},P\chi_{A}\rangle=\langle P\chi_{A},P\chi_{A}\rangle=\langle P\chi_{A},P\chi_{A}\rangle \langle 1,1\rangle
\geq \langle P\chi_{A},1 \rangle^2=\mu(A)^2.$
\end{proof}
Taking the $\liminf$ on equation~\eqref{eq.ineq.gap} and applying Lemma~\ref{l.vonneuman} we have
$$
\liminf_{n\to +\infty} \tfrac{1}{n}\overline{\Gap}(R_2,\cP_n)\geq \alpha\eta(\mu(R_1')^2-2\beta)>0.
$$
This concludes the proof of Proposition~\ref{p.gap.persistence}.
\qed

\subsection{Upper bound on the transverse entropy}
We prove Proposition~\ref{p.entropy.gap}.

\begin{lemma}\label{l.ineq1}
$\displaystyle
\underset{n\to+\infty}
\limsup \frac{1}{n} \int \operatorname{H}_{\mu^{cu}_z}(R_2,\cP^f_n)d\mu(z) \leq h(f,\mu)\mu(R_2).
$
\end{lemma}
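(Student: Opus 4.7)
The plan is to express the left-hand side as a conditional entropy, compare $\cP^f_n$ to a join of iterates of a fixed finite partition of $M$, and finally extract the factor $\mu(R_2)$ via a Shannon--McMillan--Breiman argument along $cu$-plaques.

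\emph{Step 1: Rewriting as a conditional entropy.} Let $\widetilde{\mathcal{B}}$ be the measurable partition of $M$ whose atoms are the plaques $\cW^{cu}_{R_2}(z)$ for $z\in R_2$, together with the extra atom $M\setminus R_2$. Extend $\cP^f_n$ trivially on $M\setminus R_2$ (a single atom). Since $\mu^{cu}_{R_2,z}$ is the Rohklin conditional probability of $\mu$ on the atom $\cW^{cu}_{R_2}(z)$, a direct computation gives
\begin{equation*}
\int_{R_2}H_{\mu^{cu}_z}(R_2,\cP^f_n)\,d\mu(z)=H_\mu(\cP^f_n\mid\widetilde{\mathcal{B}})=\mu(R_2)\,H_{\bar\mu}(\cP^f_n\mid\cW^{cu}_{R_2}),
\end{equation*}
with $\bar\mu=\mu|_{R_2}/\mu(R_2)$; the prefactor $\mu(R_2)$ appears because the atom $M\setminus R_2$ contributes zero to the conditional entropy.

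\emph{Step 2: Refinement by iterates of a finite partition.} I would introduce a finite partition $\widehat\cP$ of $M$ fine enough to encode the partition $\cP_\cN$ on $\cN$, the complement $M\setminus\cN$, and the local unstable geometry at scale $\varepsilon_0$. The definition of $\widetilde\cQ_k$ gives $\widetilde\cQ_k\preceq\cQ_k\vee f^{-k}\widehat\cP$, and an elementary argument shows that the separation relation defining $\cQ_k$ is controlled by the itinerary under $f^j$ at the local unstable scale, so $\cQ_k\preceq\bigvee_{j=0}^k f^{-j}\widehat\cP$. Consequently, on $R_2$,
\begin{equation*}
\cP^f_n\preceq\cP^{(n)}:=\bigvee_{k=0}^n f^{-k}\widehat\cP,
\end{equation*}
and hence $H_{\bar\mu}(\cP^f_n\mid\cW^{cu}_{R_2})\leq H_{\bar\mu}(\cP^{(n)}\mid\cW^{cu}_{R_2})$.

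\emph{Step 3 (the main obstacle).} It remains to prove
\begin{equation*}
\limsup_{n\to\infty}\frac{1}{n}\,H_{\bar\mu}(\cP^{(n)}\mid\cW^{cu}_{R_2})\leq h(f,\mu).
\end{equation*}
The naive estimate $H_{\bar\mu}(\cP^{(n)}\mid\cW^{cu}_{R_2})\leq\mu(R_2)^{-1}H_\mu(\cP^{(n)})+O(1)$ only yields the weaker bound $h(f,\mu)/\mu(R_2)$, so a sharper argument is needed. I would apply a Shannon--McMillan--Breiman theorem along $cu$-plaques: using that $\mu^{cu}_z(\cW^{cu}_{R_2}(z))$ is essentially bounded on a set of full $\mu$-measure, one gets for $\mu$-almost every $z$,
\begin{equation*}
-\frac{1}{n}\log\mu^{cu}_{R_2,z}\bigl(\cP^{(n)}(z)\cap\cW^{cu}_{R_2}(z)\bigr)\longrightarrow h^{cu}_\mu(f,\widehat\cP)\leq h_\mu(f,\widehat\cP)\leq h(f,\mu).
\end{equation*}
The local entropies $\frac{1}{n}H_{\mu^{cu}_{R_2,z}}(\cP^{(n)})$ are uniformly bounded by $\log|\widehat\cP|$, so dominated convergence integrates the pointwise bound against $\bar\mu$ and yields the desired limsup inequality. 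Multiplying by $\mu(R_2)$ and combining with Step~1 concludes the proof.
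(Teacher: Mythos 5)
Steps 1 and 2 of your argument parallel the paper: rewrite the integral as a conditional entropy along the $cu$-plaques of $R_2$, and replace $\cP^f_n$ on each plaque by the restriction of a join $\bigvee_{j=0}^{n-1} f^{-j}\widetilde\cP$ of a fixed finite partition (the paper's claim, which you paraphrase as ``the separation relation is controlled by the itinerary''). You also correctly identify the real difficulty: a naive use of ``conditional entropy $\le$ entropy'' only gives $h(f,\mu)$, and the factor $\mu(R_2)$ must come from somewhere sharper. However, your Step 3 does not close this gap. The quantity $-\frac1n\log\mu^{cu}_{R_2,z}\bigl(\cP^{(n)}(z)\cap\cW^{cu}_{R_2}(z)\bigr)$ is a local entropy of the conditional measure on a \emph{fixed} (non-iterated, non-contracting) $cu$-plaque; there is no established Shannon--McMillan--Breiman theorem for this situation, no standard quantity $h^{cu}_\mu(f,\widehat\cP)$, and no reason it should converge to something bounded by $h(f,\mu)$. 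Unlike the strong-unstable direction, the $cu$-leaves are not expanding, so the Ledrappier--Young machinery you are implicitly invoking does not apply.

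The paper proceeds differently and more elementarily. It crucially includes $\{R_2, M\setminus R_2\}$ in the refining partition $\widetilde\cP$, so every atom $\widetilde P$ of $\widetilde\cP^f_n$ lies inside $R_2$ or inside $M\setminus R_2$. Then Jensen's inequality applied to $t\mapsto -t\log t$ on the quotient measure gives
\begin{equation*}
\int H_{\mu^{cu}_z}(R_2,\cP^f_n)\,d\mu(z)\;\le\;\sum_{\widetilde P\subset R_2}-\mu(\widetilde P)\log\mu(\widetilde P)\;=\;\int_{R_2}-\log\mu(\widetilde\cP^f_n(x))\,d\mu(x),
\end{equation*}
a sum taken \emph{only} over atoms inside $R_2$. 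The factor $\mu(R_2)$ now appears automatically: by the ordinary Shannon--McMillan--Breiman theorem (with $L^1$ convergence for the ergodic $\mu$), $\frac1n\int_{R_2}-\log\mu(\widetilde\cP^f_n)\,d\mu\to \mu(R_2)\,h_\mu(f,\widetilde\cP)\le\mu(R_2)\,h(f,\mu)$. So the extraction of $\mu(R_2)$ is an integration-domain effect combined with Jensen, not a conditional SMB along plaques; the latter would require a separate (and, as stated, unsupported) theorem.
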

\begin{proof}
We first extend the partition $\cP_\cN$ to $M$:
$$\cP'_\cN:=\cP_\cN\cup\{M\setminus \cN\}.$$
In order to have a partition which refines $R_2$ by sets with small diameters, we consider
a finite measurable partition $\cA$ of $M$ whose atoms have diameter much smaller than the diameter of
the local unstable manifolds $\cW^u_{loc}(x)$ and define:
$$\widetilde \cP:=\cP'_\cN\vee\{R_2,M\setminus R_2\}\vee\cA.$$
For each $n\geq 0$ we consider the dynamical partition
$$\widetilde{\cP}^f_n=\bigvee_{j=0}^{n-1}f^{-j}\widetilde{\cP}.$$

\begin{claim}
For each $z\in Z$, the restriction of
$\widetilde \cP^f_n$ to $\cW^{cu}_{R_2}(z)$ is finer than the restriction of $\cP^f_n$.
\end{claim}
\begin{proof}
Let $x,y\in R_2$ with $\cW^{cu}_{R_2}(x)=\cW^{cu}_{R_2}(y)$
and assume that they belong to the same atom of $\widetilde \cP^f_n$.
We have to show that they also belong to the same atom of $\cP^f_n$.
Let us fix $0\leq j\leq n$.
By definition of $\cA$, we know that $f^j(x),f^j(y)$ are close. We also know that they
belong to the same element of $\cN$ or to $M\setminus \cN$.

First, let us assume that $f^j\cW^c_{R_2}(x)\subset \cN$
(the case where $f^j\cW^c_{R_2}(y)\subset \cN$ is done analogously).
In particular $f^j(y)\in \cN$.
One denotes by $z,z'\in Z$ the points satisfying
$x\in \cW^c_{R_2}(z)$ and $y\in \cW^c_{R_2}(z')$.
Since $f^k(x),f^k(y)$ remain close for $0\leq k\leq j$,
the point $f^j(y)$ belongs to $\cW^{cu}_{R_2}(f^j(x))$.
One can thus consider the set $Y\subset \cW^c_{\cN}(f^j(y))$
which is the image of $f^j\cW^c_{R_2}(x)$ by the local unstable holonomy:
$Y:=\fh^u_{z,z'}(\cW^c_{R_2}(x))\subset \cW^c_{\cN}(z')$.
Since unstable leaves are contracted by $f^{-j}$,
the set $f^{-j}(Y)$ is the image of $\cW^c_{R_2}(x)$ by local unstable holonomy.
By product structure of $\cW^{cu}_{R_2}(x)$, it coincides with $\cW^{c}_{R_2}(y)$.
One thus concludes that $f^j(\cW^{c}_{R_2}(y))=Y$ is contained in $\cN$.
Note also that the points $z,z'\in Z$ do not separate until time $j$. Hence $x,y$ belong to the same element of
the partition $\cQ_j$.

Consequently either the sets $f^j\cW^c_{R_2}(x)$ or $f^j\cW^c_{R_2}(y)$
are not included in $\cN$, then $x,y$ belong to the same atom of $\cQ_j$ and $\widetilde \cQ_j$;
or both of these sets are contained in $\cN$, then $x,y$ belong to the same element of $\cQ_j$
and by definition to the same element of $f^{-j}(\cP_\cN)$, hence to the same atom of $\widetilde \cQ_j$.
Since this holds for any $0\leq j\leq n$, this concludes the claim.
\end{proof}

The quotient of $R_2$ by the partition into sets $\cW^{cu}_{R_2}(z)$ coincides (up to a zero measure set) with the set $B^s_\delta(x_0)$.
We denote by $\widehat \mu$ the quotient of the measure $\mu|_{R_2}$.

By the claim and Jensen inequality we have
$$
\begin{aligned}
\int_{B^s_\delta(x_0)} \operatorname{H}_{\mu^{cu}_z}(R_2,\cP^f_n)&d\widehat \mu(z)= \int_{B^s_\delta(x_0)} -\sum_{P\in \cP^f_n}\mu^{cu}_{R_2,z}(P) \log \mu^{cu}_{R_2,z}(P)d\widehat \mu(z) \\
&\leq \int_{B^s_\delta(x_0)} -\sum_{\widetilde P\in \widetilde \cP^f_n}\mu^{cu}_{R_2,z}(\widetilde P) \log \mu^{cu}_{R_2,z}(\widetilde P)d\widehat \mu(z) \\
&\leq \sum_{\widetilde P\in \widetilde \cP^f_n,\;\widetilde P\subset R_2}-\mu(\widetilde P)\log \mu(\widetilde P)= \int_{R_2} -\log \mu(\widetilde \cP^f_n(x))d\mu.
\end{aligned}
$$
Dividing by $n$, taking the limsup, and using the definition of the entropy, we get the result.
\end{proof}

\begin{lemma}\label{l.ineq2}
$\displaystyle
\underset{n\to+\infty}\liminf \frac{1}{n} \int_{R_2} \operatorname{H}^u_{\mu^{cu}_z}(R_2,\cP^f_n)d\mu(z)\geq \mu(R_2)h^u(f,\mu).
$
\end{lemma}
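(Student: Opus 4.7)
The strategy is to show that each atom of $\cP^f_n$ restricted to an unstable leaf inside $R_2$ is contained in a long unstable Bowen ball, and then to apply a Brin--Katok-type formula for the unstable entropy; Fatou's lemma yields the lower bound. Since $H^u_{\mu^{cu}_z}(R_2,\cP^f_n)$ is constant on each $cu$-plaque $\cW^{cu}_{R_2}(z)$, Rokhlin's disintegration theorem recasts the integral on the left-hand side as
$$\int_{R_2}H^u_{\mu^{cu}_z}(R_2,\cP^f_n)\,d\mu(z)
= \int_{R_2} -\log\mu^u_{R_2,x}\bigl(\cP^f_n(x)\cap \cW^u_{R_2}(x)\bigr)\,d\mu(x),$$
and it suffices to bound the right-hand side from below by $\mu(R_2)h^u(f,\mu)$.

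The key geometric observation is as follows. By construction $\widetilde\cQ_k\succ \cQ_k$, so $\cP^f_n\succ \cQ_k$ for every $0\leq k\leq n$. Fix $x\in R_2$ and $k\in [0,n]$ with $f^k(x)\in R_2$: Lemma~\ref{l.return0} gives $f^k(\cW^c_{R_2}(x))\subset \cW^c_{\cN}(f^k(x))\subset \cN$, and the non-separation clause defining $\cQ_k$ (Lemma~\ref{l.scales}) then forces every $y\in \cQ_k(x)\cap \cW^u_{R_2}(x)$ to satisfy $f^j(y)\in \cW^u_{loc}(f^j(x))$ for $0\leq j\leq k$. Writing
$$B^u(x,k):=\{y\in \cW^u(x):\; d^u(f^jx,f^jy)\leq \varepsilon_0,\; 0\leq j\leq k\}$$
and $n^\star(x):=\max\{k\in[0,n]:f^k(x)\in R_2\}$, one obtains the Bowen-ball containment $\cP^f_n(x)\cap \cW^u_{R_2}(x)\subset B^u(x,n^\star(x))$.

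Ergodicity of $\mu$ and Birkhoff's theorem give $n^\star(x)/n\to 1$ for $\mu$-a.e.\ $x\in R_2$. The Brin--Katok formula for unstable entropy---available in the $C^1$ partially hyperbolic setting thanks to Brown's extension~\cite{brown_21} of Ledrappier--Young theory---states that $-m^{-1}\log \mu^u_x(B^u(x,m))\to h^u(f,\mu)$ as $m\to\infty$ for $\mu$-a.e.\ $x$, where $\mu^u_x$ denotes the (unnormalized) Rokhlin disintegration of $\mu$ along $\cW^u$. Since $\mu^u_{R_2,x}$ is the normalization of $\mu^u_x|_{\cW^u_{R_2}(x)}$ by an a.s.\ finite positive factor and Bowen balls eventually lie inside $\cW^u_{R_2}(x)$, combining with the containment above yields
$$\liminf_{n\to\infty}-\frac{1}{n}\log \mu^u_{R_2,x}\bigl(\cP^f_n(x)\cap \cW^u_{R_2}(x)\bigr)\geq h^u(f,\mu)\quad\text{$\mu$-a.e.}$$
Fatou's lemma applied to the non-negative integrand then yields $\liminf_n n^{-1}\int_{R_2}(-\log\mu^u_{R_2,x}(\cdots))d\mu\geq \mu(R_2)h^u(f,\mu)$, as required.

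The main technical ingredient is the Brin--Katok formula for unstable entropy at $C^1$ regularity, which rests on Brown's extension of Ledrappier--Young theory; alternatively, it can be derived directly from Shannon--McMillan--Breiman applied to the filtration $\{f^{-n}\xi^u\}_{n\geq 0}$, for $\xi^u$ a measurable partition subordinate to $\cW^u$ with $f\xi^u\succ \xi^u$.
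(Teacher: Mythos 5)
Your proposal is correct and follows essentially the same route as the paper: bound $\cP^f_n(x)\cap \cW^u_{R_2}(x)$ by an unstable Bowen ball, invoke the Brin--Katok formula for unstable entropy (the paper cites \cite{LY85b}, Prop.~7.2.1 and \S9.2--9.3, together with the observation that uniform unstable expansion lets one drop the $\eta\to 0$ limit over scales), and handle the normalization of $\mu^u_{R_2,x}$ versus $\mu^u_x$ via the vanishing of $\tfrac1n\log\mu^u_x(\cW^u_{R_2}(x))$.

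The one genuine, though minor, organizational difference concerns how the Bowen-ball containment and the integration are arranged. The paper invokes Proposition~\ref{p.bound.diam} (for $f$, via Remark~\ref{r.bound.diam}), whose proof goes through Lemma~\ref{l.return}: for a large-measure set $Y_n$, the diameter of $f^n(\cP^f_n(x)\cap\cW^u_{R_2}(x))$ is uniformly bounded by some $r'$, giving $\cP^f_n(x)\cap\cW^u_{R_2}(x)\subset B^u_{r'}(x,n)$ at the full time $n$; then an Egorov-type set $R'$ makes the Brin--Katok convergence uniform and the integral is estimated directly. You instead extract the last return time $n^\star(x)\leq n$ to $R_2$, use the non-separation clause in $\cQ_{n^\star(x)}$ to get containment in $B^u(x,n^\star(x))$ at the local scale $\varepsilon_0$, observe $n^\star(x)/n\to1$ a.e.\ by Birkhoff/Kac, and then pass to the integral by Fatou. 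Both routes rest on a ``return-to-$R_2$'' argument, so they are close cousins; yours trades Proposition~\ref{p.bound.diam} (which the paper needs anyway for Proposition~\ref{p.gap.persistence}) for a direct pointwise argument plus Fatou. One small inaccuracy: you write that ``Bowen balls eventually lie inside $\cW^u_{R_2}(x)$,'' which is neither true in general nor needed -- the inequality $\mu^u_{R_2,x}(\cP^f_n(x)\cap\cW^u_{R_2}(x))\leq \mu^u_x(B^u(x,n^\star(x)))/\mu^u_x(\cW^u_{R_2}(x))$ already suffices, since the denominator contributes only a $o(n)$ term after taking $\log$.
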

\begin{proof}
We have that 
$\operatorname{H}^u_{\mu^{cu}_z}(R_2,\cP^f_n)=\int_{R_2}-\log \mu^u_x(\cP^f_n(x))d\mu^{cu}_z(x)$.

By \cite[Proposition 7.2.1 and Sections 9.2, 9.3]{LY85b} for $\mu$ almost every $x\in M$
$$
\lim_{\eta\to 0} \liminf_{n\to +\infty} -\tfrac{1}{n}\log\mu^u_x(B^u_\eta(x,n))=h^u(f,\mu),
$$
where 
$B^u_\eta(x,n)=\cap_{j=0}^{n} f^{-j} B^u_\eta (f^j(x)).$
Since $f$ expands uniformly in the unstable direction, given any $\eta'>{\eta}>0$ there exists $n_0$ such that $$
B^u_\eta(x,n)\subset B^u_{\eta'}(x,n)\subset B^u_\eta(x,n-n_0).$$ 
Hence
$$
\underset{n\to+\infty} \liminf -\tfrac{1}{n} \log\mu^u_x(B^u_\eta(x,n)) =\underset{n\to+\infty} \liminf -\tfrac{1}{n}\log \mu^u_x(B^u_{\eta'}(x,n)),
$$
$$
\lim_{n\to +\infty} -\tfrac{1}{n}\log\mu^u_x(B^u_\eta(x,n))=h^u(f,\mu).
$$
Fix some $\varepsilon>0$.
As noticed in remark~\ref{r.bound.diam}, Proposition~\ref{p.bound.diam} also holds for the diffeomorphism $f$:
this gives some number $r'>0$ and for each $n\geq 0$ a set $Y_n$.
There are $n_0\geq 1$, $R'\subset {R_2}$
with  $\mu({R_2}\setminus R')<\varepsilon$ such that for every $x\in R'$ and $n\geq n_0$,
$$
-\tfrac{1}{n}\log \mu^u_x(B^u_{r'}(x,n)\geq h^u(f,\mu)-\varepsilon.
$$

For $x\in Y_n$ we have $f^n(\cP^f_n(x)\cap\cW^u_{R_2}(x))\subset B^u_{r'}(f^n(x))$ and
$$
\begin{aligned}
\frac{1}{n}\int_{R_2} -\log \mu^u_x(\cP^f_n(x))d\mu &\geq \frac{1}{n}\int_{R'\cap Y_n} -\log \mu^u_x(\cP^f_n(x))d\mu\\
&\geq \frac{1}{n}\int_{R' \cap Y_n} -\log \mu^u_x(B^u_{r'}(x,n))d\mu\\
& \geq \mu(R'\cap Y_n)(h^u(f,\mu)-\varepsilon)\\
& \geq (\mu({R_2})-2\varepsilon)(h^u(f,\mu)-\varepsilon).
\end{aligned}
$$
Since $\varepsilon>0$ is arbitrary, this implies the inequality.
\end{proof}

The definition of $\overline{\Gap_{\mu^{cu}_z}}(R_2,\cP^f_n)$ and Lemmas~\ref{l.ineq1} and~\ref{l.ineq2} together
conclude the proof of Proposition~\ref{p.entropy.gap}. \qed

\subsection{Proof of the Main Theorem}
By Theorem~\ref{t.prod.struct} there exists a set $X$ with full $\mu$-measure
which has a global c$\times$u-product structure inside each leaf $\cW^{cu}(x)$.
We have to prove that the measure $\mu^{cu}_x$ on $\cW^{cu}(x)$ is a product.

Let us fix $\ell>0$ large and let us apply the construction of the beginning of section~\ref{s.gaps.entropy}
with scales $L,\ell_2,\ell_1> K\ell$.
This provides a set $R_1$ centered at some point $x_0$, with positive $\mu$-measure (by~\eqref{e.R1-positive}) and where Proposition~\ref{p.entropy.gap0} holds.
By Proposition~\ref{p.cover}, it can be applied to partitions with small boundary $\cP^u$ of $D^u$ into subsets with arbitrarily
small diameter, hence which generate the $\sigma$-algebra of $D^u$. Since $Z$ has a cs$\times$u-product structure, they induce partitions of each set $\cW^u_Z(y)$, which generate their $\sigma$-algebra.
Let $\cP$ be the partitions of $R_1$ induced by the partitions $\cP^u$.
Proposition~\ref{p.entropy.gap0} implies that for $\mu$-almost every point $z\in R_1$,
the transverse entropies $\Gap(\cW^{cu}_{R_1}(z),\cP|_{\cW^{cu}_{R_1}(z)})$ vanish.
By corollary~\ref{c.criterion}, each probability measure $\mu^{cu}_{R_1,z}$ is a product on the corresponding set
$\cW^{cu}_{R_1}(z)$ for almost every $z\in R_1$.

Almost every point $x\in X$ has arbitrarily large backward iterates in $R_1$ close to $x_0$.
We consider the large compact subset with product structure
$\cW^{cu}_{\ell,\ell}(x)$, i.e. the set of intersection points
$\cW^c(z)\cap \cW^u(y)$ for $(y,z)\in B^c_\ell(x)\times B^u_\ell(x)$.
By the quasi-isometric property,
there exists $n\geq 1$ large such that
$f^{-n}(\cW^{cu}_{\ell,\ell}(x))\subset 
\cup_{z\in \cW^{su}_{\delta,\delta}(x_0)} B^c_{K\ell}(z)$.
Note that $K\ell$ is smaller than the center size $\ell_1$ of $R_1$.
Then Property (f) in Proposition~\ref{p.def-N}, implies that
there exists a full measure set $\Omega$ such that
$f^{-n}(\cW^{cu}_{\ell,\ell}(x))\cap \Omega \subset \cW^{cu}_{R_1}(\zeta)$
for some $\zeta\in R_1$.
In particular, $\mu^{cu}_{f^{-n}(x)}|_{f^{-n}(\cW^{cu}_{\ell,\ell}(x))}$ is a product.
By invariance, this
proves that $\mu^{cu}_x|_{\cW^{cu}_{\ell,\ell}(x)}$ is a product.
As $\ell$ is arbitrary, this concludes that $\mu^{cu}_x$ is a product.
\qed


\section{measures with a local product structure}\label{s.meas.prod.structure}
In this section we prove Theorem~\ref{thm.continuous.disint}. As before, we consider a $C^1$ diffeomorphism $f$ which is partially hyperbolic, quasi-isometric in the center and an ergodic measure $\mu$. We show that if $\mu$ has some local product structure then its su-invariant disintegrations along the center plaques $\cW^c_{loc}(x)$ are continuous in the sense of Definition~\ref{d.local-center-measure}.
\medskip

\subsection{Quasi-invariant families of measures}
Given two transverse foliations $\cF$ and $\cG$,
points $x,y$ such that $\cG(x)=\cG(y)$,
$\varepsilon$-balls $B^{\cF}_{\varepsilon}(x)\subset \cF(x)$, $B^{\cF}_\varepsilon(y)\subset \cF(y)$ and some holonomy map
$\fh^{\cG}_{x,y}:B^{\cF}_\varepsilon(x)\to B^{\cF}_\varepsilon(y)$ along the leaves of $\cG$, we introduce the following definition:
\begin{definition}
The disintegration $\{\mu_x^\cF\}_{x\in M}$ of $\mu$ along the leaves of $\cF$
is \emph{$\cG$-quasi-invariant} if there exists $\varepsilon>0$ and a measurable set $X$
such that $\mu(X)=1$ and for any $x,y\in X$ with $y\in B^{\cG}_\varepsilon(x)$,
the measure $\mu^{\cF}_y|_{\fh^{\cG}_{y} B^\cF_\varepsilon(x)}$ is absolutely continuous with respect to ${\fh^{\cG}_{y}}_*\mu^{\cF}_x|_{B^\cF_\varepsilon(x)}$.

Observe that absolutely continuity of measures remains true if we multiply the measure by a factor, so the quasi-invariance is well defined.
\end{definition}
This is a weaker property than the invariance under $\cG$-holonomies.

\begin{proposition}\label{p.cs.prod.struct}
The following properties are equivalent:
\begin{enumerate}[(i)]
\item The family $\{\mu^u_x\}$ is cs-quasi-invariant.
\item The family $\{\mu^{cs}_x\}$ is u-quasi-invariant.
\item $\mu$ has local cs$\times$u-product structure.
\end{enumerate}
\end{proposition}
\begin{proof}
As in Definition~\ref{def.mes.prod.st},
given $x_0\in M$ and $\varepsilon>0$, we consider the set $\cN^{cs\times u}_\varepsilon(x_0)$ with local cs$\times$u-product structure,  i.e. the image of $B^{cs}_{\varepsilon}(x_0)\times B^{u}_\varepsilon(x_0)$
by the homeomorphism $\Psi\colon (x,y)\mapsto \fh^u_y(x)$.

Let us first assume that the family $\{\mu^u_x\}$ is cs-quasi-invariant
and let us fix the total measure set $X$ where the cs-quasi-invariance is satisfied. For $\mu$ almost every point $x_0\in X$, one notices that $\mu^{cs}_{x_0}$-almost every $y\in \cW^{cs}(x_0)$ belongs to $X$.
Let us take $\varepsilon>0$ small such that $\cN^{cs\times u}_\varepsilon(x_0)$ has local cs$\times$u-product structure. We are going to work on $B^{cs}_{\varepsilon}(x_0)\times B^{u}_\varepsilon(x_0)=\Psi^{-1}(\cN^{cs\times u}_\varepsilon(x_0))$ endowed with
the measure $\tmu=\Psi^*\mu\mid_{\cN^{cs\times u}_\varepsilon(x_0)}$. As $x_0$ and $\varepsilon$ are fixed, to simplify the notation we write $B^*:=B^*_{\varepsilon}(x_0)$ for $*=cs,\,u$.

By Rokhlin disintegration formula we can write 
$$
\tmu=\int_{B^{cs}\times \{x_0\}}\mu^u_y d\mu^{cs}(y),$$ 
where $\mu^{cs}$ is the projection of $\tmu$ by $\pi^u:B^{cs}\times B^{u}\to B^{cs}\times \{x_0\}$,
defined by $(x^{cs},x^{u})\mapsto (x^{cs},x_0)$.

Observe that using these coordinates with product structure, the cs-holo\-no\-mies take the form $\fh^{cs}_{(y^{cs},y^u)}(x^{cs},x^u)=(y^{cs},x^u)$. Then the cs-quasi-invariance and Radon-Nikodym theorem imply that for $\mu^{cs}_{x_0}$ almost every $y\in B^{cs}\times \{x_0\}$ we have $\mu^u_y=\rho(\cdot,y)\mu^u_{x_0}$. So we get $\tmu=\rho \mu^{cs}\times \mu^{u}_{x_0}$.

We have proved (i)$\Rightarrow$(iii). The same argument shows (ii)$\Rightarrow$(iii).
The converse direction (iii)$\Rightarrow$(i)\&(ii) is trivial.
\end{proof}

\begin{proposition}\label{p.prod.eq.csquasi}
If $\{\mu^s_x\}$ is cu-quasi-invariant and $\{\mu^c_x\}$ is u-invariant then $\{\mu^{cs}_x\}$ is u-quasi-invariant.
\end{proposition}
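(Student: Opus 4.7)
The plan is to combine the local product structure coming from the first hypothesis with a factorization of its cu-factor coming from the second hypothesis, then regroup factors. I rely on Proposition~\ref{p.cs.prod.struct}, on its variant for the transverse pair $(\cW^s,\cW^{cu})$ (whose proof is identical to the one given for $(\cW^{cs},\cW^u)$), and on the remark after it equating cs-quasi invariance of $\mu^u$ with u-quasi invariance of $\mu^{cs}$. In particular, it suffices to show that $\mu$ has a local $cs\times u$-product structure.

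By the $(\cW^s,\cW^{cu})$-version of Proposition~\ref{p.cs.prod.struct}, the cu-quasi invariance of $\mu^s$ provides, for $\mu$-a.e.\ $x_0\in\supp(\mu)$ and small $\varepsilon>0$, a chart $\Phi\colon B^s_\varepsilon(x_0)\times B^{cu}_\varepsilon(x_0)\to M$ respecting the two foliations in which $\Phi^\ast\mu\sim \mu^s_{x_0}\times\eta^{cu}$ for some Radon measure $\eta^{cu}$ on $B^{cu}_\varepsilon(x_0)$. Next I refine the $B^{cu}$ factor into local $c\times u$ coordinates (valid by the uniform transversality of $\cW^c$ and $\cW^u$ on cu-leaves, and compatible with Theorem~\ref{t.prod.struct}), and disintegrate $\eta^{cu}$ along its vertical $\cW^c$-plaques: up to positive measurable densities, these conditional measures agree with the center disintegrations $\mu^c_{\cdot}$ of the original measure.

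The u-invariance of $\mu^c$ asserts that $(\fh^u_{y,y'})_\ast\mu^c_y=K(y,y')\,\mu^c_{y'}$ for $\mu$-a.e.\ pair $y,y'$ on a common cu-leaf. In the $c\times u$ product chart, u-holonomies become horizontal translations, so this states precisely that the fibers $\mu^c_{\cdot}$ are independent of the u-coordinate up to multiplicative factors. A Fubini-type argument then produces a Radon measure $\nu^u$ on $B^u_\varepsilon(x_0)$ such that $\eta^{cu}\sim\mu^c_{x_0}\times\nu^u$. Substituting,
\[
\Phi^\ast\mu\ \sim\ \mu^s_{x_0}\times\mu^c_{x_0}\times\nu^u.
\]
Regrouping the first two factors, $\mu$ is locally equivalent in $cs\times u$ coordinates to a product measure, i.e.\ $\mu$ has local $cs\times u$-product structure. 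Applying Proposition~\ref{p.cs.prod.struct} together with the subsequent remark yields the u-quasi invariance of $\mu^{cs}$, as desired.

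The main technical point is the passage from the fiber-wise u-invariance of $\mu^c$ (a statement for $\mu$-a.e.\ pair on a common cu-leaf) to a genuine measurable product decomposition of $\eta^{cu}$ in the chart. One must select $x_0$ in a set of full measure where the cu-quasi invariance of $\mu^s$, the u-invariance of $\mu^c$, and the Rokhlin disintegrations along the local $\cW^c$-partition all hold simultaneously on the relevant fibers, and then use a Fubini-type argument to glue the fiber equivalences into the product decomposition. This is routine but requires some care with the measurable selections.
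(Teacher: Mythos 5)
You take a genuinely different route from the paper: instead of working directly with the $\mu^{cs}$-disintegrations and transporting them by holonomies (as the paper does, using the $cu$-quasi-invariance of $\mu^s$ \emph{twice}), you try to stack two product structures and regroup factors. The idea is appealing but the regrouping step contains a real gap.

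The $s\times cu$-product decomposition obtained from the $cu$-quasi-invariance of $\mu^s$ lives in a chart $\Phi$ built from $\cW^s$- and $\cW^{cu}$-holonomies; refining the $cu$-factor uses the local $c\times u$-chart $\psi$ inside $\cW^{cu}$-leaves. Regrouping the $s$- and $c$-factors of the composite $\Phi\circ(\psi\times\id)$ does yield a parametrization $(B^s\times B^c)\times B^u$ whose slices $\{z^u=\mathrm{const}\}$ are genuine $cs$-plaques, so the $cs$-fibration is correct. But the transverse fibers of this composite chart (fixing $(z^c,z^s)$ and letting $z^u$ vary) are $s$-holonomy images of $\cW^u$-plaques, and these are \emph{not} $\cW^u$-plaques in general — that would amount to joint integrability of $E^s\oplus E^u$, which is exactly what fails in the accessible setting. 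Hence the composite chart is not the chart $\Psi$ of Definition~\ref{def.mes.prod.st}, and having a product decomposition in one does not give a product decomposition in the other. Tracing the change of coordinates between the two charts shows that it restricts, on each $cs$-plaque, to a $u$-holonomy composed with $s$-holonomies between $c$-plaques; to see that this carries one fiber measure to an equivalent one you would either need to know already that $\mu^{cs}$ is $u$-quasi-invariant (circular) or invoke an $s$-quasi-invariance of $\mu^c$ that is not among the hypotheses. So the step "regrouping gives local $cs\times u$-product structure, hence Proposition~\ref{p.cs.prod.struct} applies" does not go through as stated. (Your intermediate assertion that $\eta^{cu}\sim\mu^c\times\nu^u$ from the $u$-invariance of $\mu^c$ is fine, and the intermediate conclusion — that $\mu$ does have local $cs\times u$-product structure — is indeed true, but it is equivalent to the proposition rather than a shortcut to it.)
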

\begin{proof}
Using the local product structure of cs-manifolds we can find $\varepsilon>0$ sufficiently small, such that for every $x\in M$, the map $\Psi:B^{c}_\varepsilon(x)\times B^s_\varepsilon(x)\to \cW^{cs}(x)$ defined by $\Psi(y,z)= \fh^s_{z}(y)$ is a homeomorphism over its image, that will be denoted by $\cN^{cs}_\varepsilon(x)$.

Let $X$ be a full measure set of points that satisfy the cu-quasi-invariance of $\{\mu^s_x\}$ and the u-invariance of $\{\mu^c_x\}$. 

By the $cu$-quasi invariance we can take $x_0\in M$ and $y_0\in \cW^u(x_0)$ such that $\mu^c_{x_0}(M\setminus X)=\mu^c_{y_0}(M\setminus X)=0$.

To see this observe that it exists $Y\subset M$, $\mu(Y)=1$ such that for $y\in Y$, $\mu^c_y(M\setminus X)=0$, then for $\mu$ almost every $x\in M$, $\mu^s_x(M\setminus Y)=0$.
We can take $x_0$ with this last property and a total measure set of $y\in M$ such that  $W^{cu}(x_0)\cap W^s(y)$ and $\mu^s_y(M\setminus Y)=0$, by the $cu$-quasi invariance, up to changing $x_0$ to a $\mu^s_{x_0}$ total measure set, we can assume that $x_0$ and some $y'\in W^{cu}(x_0)\cap W^s(y)$ belongs to $Y$. Taking $y_0\in W^u(x_0)\cap W^c(y')$ we have the property we wanted.

There exists $0<\varepsilon'\leq \varepsilon$ such that the u-holonomy $\fh^u:\cN^{cs}_{\varepsilon'}(x_0)\to \cW^{cs}(y_0)$ is a well defined homeomorphism over its image. Up to taking $\varepsilon'$ smaller we can assume that $\fh^u(\cN^{cs}_{\varepsilon'}(x_0))\subset \cN^{cs}_\varepsilon(y_0)$. See Figure~\ref{fig.teocsu}.
\begin{figure}[h]
\centering
\includegraphics[scale= 0.6]{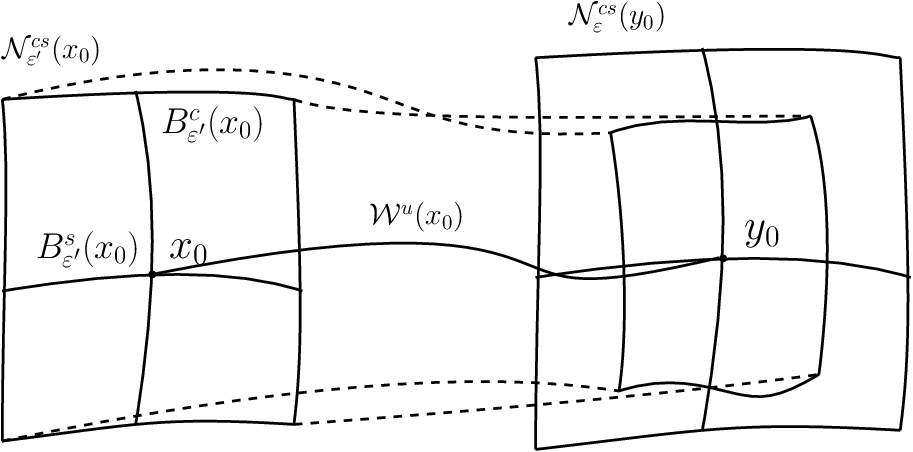}
\caption{Definition the holonomy $\fh^u:\cN^{cs}_{\varepsilon'}(x_0)\to \cN^{cs}_{\varepsilon}(y_0)$.}
\label{fig.teocsu}
\end{figure}

Take any measurable set $A\subset \cN^{cs}_{\varepsilon'}(x_0)$. To conclude the proposition, we have to prove that $\mu^{cs}_{x_0}(A)=0$ if and only if $\mu^{cs}_{y_0}(\fh^u(A))=0$. As the normalization of the measures does not matter, from now on we restrict $\mu^{cs}_{x_0}$ to $\cN^{cs}_{\varepsilon'}(x_0)$ and normalize it such that $\mu^{cs}_{x_0}(\cN^{cs}_{\varepsilon'}(x_0))=1$.

We have 
$$
\mu^{cs}_{x_0}(A)=\int_{B^s_{\varepsilon'}(x_0)} \mu^c_z(A) d \pi^c_* \mu^{cs}_{x_0}(z),
$$
where $\pi^c:\cN^{cs}_{\varepsilon'}(x_0)\to B^s_{\varepsilon'}(x_0)$ is the projection by the center holonomy. 

Observe that by the cu-quasi-invariance of $\{\mu^s_x\}$, for $\mu^c_{x_0}$ almost every $x\in B^c_{\varepsilon'}(x_0)$ the measure $\mu^s_x$ is absolutely continuous with respect to $({\fh^{cu}_{x}})_* \mu^s_{x_0}$. Now observe that for any $C\subset B^s_{\varepsilon'}(x_0)$ 
$$
\pi^c_* \mu^{cs}_{x_0}(C)=\int_{B^c_{\varepsilon'}(x_0)}\mu^s_x(\fh^{cu}_{x}(C))d \pi^s_* \mu^{cs}_{x_0}(y),
$$
where  $\pi^s:\cN^{cs}_{\varepsilon'}(x_0)\to B^c_{\varepsilon'}(x_0)$ is the projection using the stable holonomy.
Since $\{\mu^s_x\}$ is cu-quasi-invariant,
$(\fh^{cu}_{x_0})_*\mu^s_x$ is absolutely continuous with respect to $\mu^s_{x_0}$.
So $\pi^c_* \mu^{cs}_{x_0}$ is absolutely continuous with respect to $\mu^s_{x_0}$. Hence
there exists a positive function $\rho$ such that
\begin{equation}\label{eq.mucsx0}
\mu^{cs}_{x_0}(A)=\int_{B^s_{\varepsilon'}(x_0)} \mu^c_z(A) \rho(z)d \mu^{s}_{x_0}(z).
\end{equation}
Analogously there exists a positive measurable function $\rho'$ such that
$$
\mu^{cs}_{y_0}(\fh^u(A))=\int_{B^s_{\varepsilon}(y_0)} \mu^c_{z'}(\fh^u(A)) {\rho'}(z')d \mu^{s}_{y_0}(z').
$$
Here we normalize such that $\mu^{cs}_{y_0}(\fh^u(\cN^{cs}_{\varepsilon'}(x_0))=1$, so $\fh^u_*\mu^c_z=\mu^c_{\fh^u(z)}$ for $\mu^{cs}_{x_0}$ almost every $z\in \cN^{cs}_{\varepsilon'}(x_0)$. 

Let us consider the cu-holonomy $\fh^{cu}:B^s_{\varepsilon'}(x_0)\to  B^s_{\varepsilon}(y_0)$ such that $\fh^{cu}(z)\in B^c_\varepsilon(\fh^u(z))\cap B^s_{\varepsilon}(y_0)$.
The u-invariance of $\{\mu^c_x\}$ gives (see Figure~\ref{fig.teocsu2}),
$$
\mu^{cs}_{y_0}(\fh^u(A))=\int_{B^s_{\varepsilon}(y_0)} \mu^c_{{(\fh^{cu}})^{-1}(z')}(A) {\rho'}(z')d \mu^{s}_{y_0}(z').
$$ 
\begin{figure}[h]
\centering
\includegraphics[scale= 0.6]{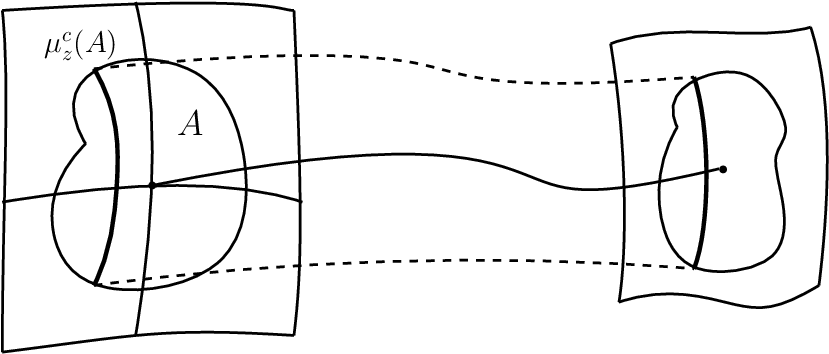}
\put(-35,60){\small $\fh^u(A)$}
\put(-45,85){\tiny $\mu^c_{z'}(\fh^u(A))$}
\caption{$\mu^c_z(A)=\mu^c_{z'}(\fh^u(A))$.}
\label{fig.teocsu2}
\end{figure}

The cu-quasi-invariance of $\{\mu^s_x\}$ gives a positive measurable function $\widetilde \rho$ such that
\begin{equation}\label{eq.mucsy0}
\mu^{cs}_{y_0}(\fh^u(A))=\int_{B^s_{\varepsilon'}(x_0)} \mu^c_{z}(A) {\rho}(\fh^{cu}(z))\widetilde\rho(z)d \mu^{s}_{x_0}(z).
\end{equation}
So from \eqref{eq.mucsx0} and \eqref{eq.mucsy0} we get $\mu^{cs}_{x_0}(A)=0$ if and only if $\mu^{cs}_{y_0}(\fh^u(A))=0$.
\end{proof}

\subsection{Continuous families of center measures: proof of Theorem~\ref{thm.continuous.disint}}
\label{ss.continuous-family}
We first complement Definition~\ref{d.local-center-measure}:
\begin{definition}\label{d.local-center-measure2}
A family of local center measures $\{\nu^c_x\}_{x\in X}$ is \emph{$f$-invariant} if there is $\varepsilon>0$ such that,
for any $x\in X$, there exists $K'_{x}>0$ satisfying
$$f_*(\nu^c_x|_{B^c_\varepsilon(x)})=K'_{x}. \nu^c_{f(x)}|_{f(B^c_\varepsilon(x))}.$$
It is \emph{u-invariant} if there is $\varepsilon>0$ such that,
for any $x,y\in X$ with $y\in B^{cu}_\varepsilon(x)$, there exists $K_{x,y}>0$ satisfying
$$(\fh^u_{y})_*(\nu^c_x|_{B^c_\varepsilon(x)})=K_{x,y}. \nu^c_y|_{\fh^u_y(B^c_\varepsilon(x))}.$$
We define analogously the \emph{s-invariance}.
\end{definition}

By the Main Theorem, if a measure $\mu$ satisfies $h^s(f,\mu)=h^u(f,\mu)=h(f,\mu)$,
then its center disintegrations $\{\mu^c_x\}$ are s- and u-invariant.
If moreover $\mu$ has local cs$\times$u-product structure, then the unstable disintegrations
$\{\mu^u_x\}$ are cs-quasi-invariant.
Theorem~\ref{thm.continuous.disint} is now a consequence of the following proposition.

\begin{proposition}\label{p.cont.dis}
If $\{\mu^u_x\}$ is cs-quasi-invariant and $\{\mu^c_x\}$ is s- and u-in\-va\-ri\-ant then there there exists a
family of local center measures $\{\nu^c_x\}_{x\in \supp(\mu)}$
which is continuous, $f$-invariant, s-invariant, u-invariant and extends the center disintegration of $\mu$.\end{proposition}

\begin{addendum}\label{a.cont.dis}
If $f$ is a discretized Anosov flow and the support of $\mu$ is not contained in finitely many compact center leaves,
then one can:
\begin{itemize}
\item normalize the center measures so that $\mu^c_x([x,f(x)))=1$ for $\mu$-ae $x$,
\item choose $\{\nu^c_x\}$ so that $K_x=K'_x=K_{x,y}=1$ in Definitions~\ref{d.local-center-measure} and~\ref{d.local-center-measure2}.
\end{itemize}
\end{addendum}
\begin{proof}[Proof of Proposition~\ref{p.cont.dis}]
Let $X$ a set of total $\mu$-measure satisfying the definition of s- and u-invariance for $\{\mu^c_x\}$
and of cs-quasi-invariance for $\{\mu^u_x\}$.
There exists a dense set of points $x\in \supp(\mu)$ such that $\mu^{cs}_x$ almost every point in $\cW^{cs}(x)$ and $\mu^s_x$ almost every point in $\cW^{s}(x)$ are contained on $X$, and moreover $x$ belongs to $X$. Fix one of these points and denote it by $x_0$.

For $\varepsilon>0$ small, the map $\Psi: B^{cs}_\varepsilon(x_0)\times B^{u}_\varepsilon(x_0)\to \cN^{cs\times u}_\varepsilon(x_0)$ defined by $(y,z)\mapsto \fh^u_{z}(y)$ is a homeomorphism. Take $\varepsilon'>0$ sufficiently small such that $\cN^{cs}_{\varepsilon'}(x_0)$ is contained in $B^{cs}_\varepsilon(x_0)$, where $\cN^{cs}_{\varepsilon'}(x_0)$ is defined as in the proof of Proposition~\ref{p.prod.eq.csquasi}. Now take $V=\Psi(\cN^{cs}_{\varepsilon'}(x_0)\times B^u_\varepsilon(x_0))$, see Figure~\ref{fig.teocontdis}.
For $x\in V$, we define $\cW^*_V(x):= \cW^*_{loc}(x)\cap V$, for $*=c,u,s,cu,cs$.
\begin{figure}[h]
\centering
\includegraphics[scale= 0.6]{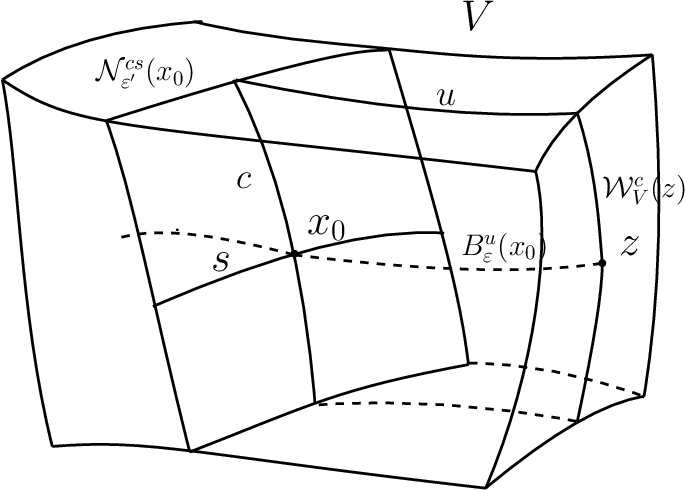}
\caption{Definition of $V$.}
\label{fig.teocontdis}
\end{figure}

By local c$\times$s-product structure of $ \cN^{cs}_{\varepsilon'}(x_0)$, for every $y,z\in \cN^{cs}_{\varepsilon'}(x_0)$ there is a homeomorphism $\fh^s_{z}:\cW^c_V(y)\to \cW^c_V(z)$ where $\fh^s_{z}(t)$ is the unique intersection point between $\cW^s_V(t)$ and $\cW^c_V(z)$. Let us consider the center disintegration $\{\mu^c_x\}_{x\in V}$ of $\mu|_V$ normalized by $\mu^c_x(\cW^c_V(x))=1$. By our assumption, one can assume that it is defined at each point of
$X$ and that it is s- and u-invariant.

Let us consider the measure $\mu^c_{x_0}$.
For every $z\in \cW^s_V(x_0)$ we can define $\widehat \nu^{x_0}_z=({\fh^s_{z}})_*(\mu^c_{x_0}|_{\cW^c_V(x_0)})$.
For $\mu^{cs}_{x_0}$ almost every $x\in \cN^{cs}_{\varepsilon'}(x_0)$,
there exists $z\in \cW^s_{\varepsilon'}(x_0)$ such that $\cW^c_V(z)=\cW^c_V(x)$; then
the s-invariance and the choice of the normalization give $\mu^c_x|_{\cW^c_V(x)}=\widehat \nu^{x_0}_z|_{\cW^c_V(z)}$.

By construction of $V$, for every $z\in \cW^s_V(x_0)$ and any $y\in \cW^u_V(z)$ there exists a homeomorphism $\fh^u_{y}:\cW^c_V(z)\to \cW^c_V(y)$ such that $\fh^u_{y}(t)$ is the only intersection point between $\cW^u_V(t)$ and $\cW^c_V(y)$. 
We then define $\widehat \nu^{x_0}_y=({\fh^u_{y}})_*\widehat \nu^{x_0}_z$. 

Let us take $x$ such that $\mu^{cs}_x(X)=1$. By the u-quasi-invariance of $\{\mu^{cs}_\zeta\}$, the projection of $X\cap \cW^{cs}_V(x_0)$ to $\cW^{cs}_V(x)$ by the u-holonomy has total $\mu^{cs}_x$ measure,
and the u-invariance of $\{\mu^c_\zeta\}$ gives $\widehat \nu_x^{x_0}=\mu^c_x$.
We have defined $\widehat \nu^{x_0}_x$ for any $x\in \Psi(B^s_{\varepsilon'}(x_0)\times B^u_\varepsilon(x_0))$.
We can extend it to $V$ by requiring $\widehat \nu_x^{x_0}=\widehat \nu_{x'}^{x_0}$ for any $x,x'\in V$ satisfying $\cW^c_V(x)=\cW_V^c(x')$.

The u-invariance of $\{\mu^c_x\}$ and the u-quasi-invariance of $\{\mu^{cs}_x\}$
then imply that $\mu$-almost every $x\in V$ satisfies $\widehat \nu_x^{x_0}=\mu^c_x|_{\cW^c_V(x)}$.
Hence the family of measures $\{\widehat \nu_x^{x_0}\}_{x\in V}$ disintegrates $\mu|_V$ alongs the plaques $\cW^c_V(x)$;
by construction it is u-invariant as in Definition~\ref{d.local-center-measure2} and continuous.
\medskip

Note that there exist $\delta,\eta>0$ such that for any $x$ that is $\eta$-close to $x_0$,
the $\delta$-neighborhood of $x$ in $\cW^c(x)$ is contained in $\cW^c_V(x)$.
Let us choose a continuous function
$\rho \colon [0,\delta]\to [0,1]$ which satisfies
$\rho([0,\delta/2])=1$ and  $\rho(\delta)=0$.
We also denote $d_x\colon \cW^c_V(x)\to \mathbb{R}_+$ the distance to $x$ inside the plaque $\cW^c(x)$.
Finally, we define the measures
$\nu^{x_0}_x:= (\rho\circ d_x).\widehat \nu_x^{x_0}$ along the plaques $\cW^c_V(x)$ for any point $x\in B_\eta(x_0)$.
We have thus defined a family of local center measures
$\{\nu_x^{x_0}\}_{x\in B_\eta(x_0)}$ which is u-invariant and continuous.
\medskip

Since $x_0$ can be chosen in any full measure set of $\mu$,
the measure $\nu^{x_0}_{x_0}$ gives positive mass
to any small neighborhood of $x_0$ in $\cW^c(x_0)$.
In particular, the construction we have done implies that
for any point $x$ in a uniform neighborhood of $x_0$ the measure $\nu^{x_0}_{x_0}$ does not vanish.

Considering a different point $y_0$, one associates a different set $V$
and a different family of measures $\nu^{y_0}_x$.
Note that if $x_0,y_0$ are close enough and if one considers a point $x\in B_\eta(x_0)\cap B_\eta(y_0)\cap X$,
then there exists $K>0$ such that
$\widehat \nu_x^{y_0}|_{B^c_\delta(x)}=K. \widehat \nu_x^{x_0}|_{B^c_\delta(x)}$.
By construction of the measures $\nu^{x_0}_x$ and $\nu^{y_0}_x$, we then have $\nu_x^{y_0}=K. \nu_x^{x_0}$.
By continuity of the families and using the fact that
$\nu_x^{x_0}$ and
$\nu_x^{y_0}$ do not vanish,
this also holds for any
$x\in B_\eta(x_0)\cap B_\eta(y_0)\cap \supp(\mu)$.
\medskip

In order to globalize the construction, one considers
a finite subset $\{x_i\}\subset \supp(\mu)$ which is enough dense in $\supp(\mu)$
and associates a partition of the unity, i.e. a family of continuous functions
$\varphi_i\colon M\to [0,1]$ such that $\sum \varphi_i=1$,
and each $\varphi_i$ is supported on a small neighborhood of $x_i$.
We then define $\nu^c_x=\sum_i\varphi_i(x)\cdot \nu^{x_i}_x$
for any $x\in \supp(\mu)$. (By convention, we set $\varphi_i(x)\cdot \nu^{x_i}_x=0$
when $\varphi_i(x)=0$ even if the measure $\nu^{x_i}_x$ has not been defined.)
We have thus defined the family of local center measures
$\{\nu^c_x\}_{x\in \supp(\mu)}$ and we have to check that the announced properties are satisfied.
Note that by construction, the mass of the measures $\nu^c_x$ is uniformly bounded away from~$0$.

Since the family $\{\nu^{x_i}_x\}$, for each $i$, is continuous,
the obtained family $\{\nu^c_x\}_{x\in \supp(\mu)}$ is also continuous.
By construction, for $\mu$-almost every point $x$ and any point $x_i$,
there exists $K_i> 0$ such that
$\nu^{x_i}_x$ coincides with $K_i\cdot \mu^c_x$ on the ball $B^c_{\delta/2}(x)$;
hence the family $\{\nu^c_x\}_{x\in \supp(\mu)}$ extends the center disintegrations of $\mu$.
Since each family $\{\nu^{x_i}_x\}$ is u-invariant,
one deduces that $\{\nu^c_x\}_{x\in \supp(\mu)}$ is u-invariant as well.

There exists $\delta'<\delta/2$ such that for any $x\in M$,
the image of $B^c_{\delta'}(x)$ is contained in $B^c_{\delta/2}(f(x))$.
Since the family of center disintegrations $\mu^c_x$
is $f$-invariant, for $\mu$-almost every point $x$ there exists
$K_x>0$ such that $f_*\nu^c_{x}=K_x\cdot \nu^c_{f(x)}$.
Taking the limit, this properties extends to any $x\in \supp(\mu)$
and $\{\nu^c_x\}_{x\in \supp(\mu)}$ is $f$-invariant.

By Proposition~\ref{p.prod.eq.csquasi} we can exchange the role of s and u to build in a same way
a family of local center measures $\{\widetilde \nu^c_x\}_{x\in \supp(\mu)}$ which is continuous
and s-invariant and extends the center disintegration of $\mu$.
In particular for $\mu$-almost every point $x$ there exists $K>0$
such that $\nu^c_x=K\cdot \widetilde \nu^c_x$.
Since the mass of the measures $\nu^c_x$ and $\widetilde \nu^c_x$
are uniformly bounded away from $0$, this property extends to any point $x\in \supp(\mu)$.
The s-invariance of $\{\widetilde \nu^c_x\}_{x\in \supp(\mu)}$ then implies the s-invariance
of $\{\nu^c_x\}_{x\in \supp(\mu)}$.
\end{proof}
\begin{proof}[Proof of Addendum~\ref{a.cont.dis}]
As $f$ is a discretized Anosov flow and the measure is not concentrated on compact center leaves, for $\mu$-almost every point $x$, the leaf $\cW^c(x)$ is a line.
We follow the proof of Proposition~\ref{p.cont.dis}, but choose the different normalization:
$\mu^c_x([x,f(x)))=1$ for the center measures.

As $f$ is center fixing $f_* \mu^c_x=\mu^c_x$, see \cite[Proposition~3.3]{AVW11}. We also have that $f_*\mu^c_x=K_x \mu^c_{f(x)}$ but by the choice of the normalization $f_*\mu^c_x=\mu^c_{f(x)}$ so $\mu^c_x=\mu^c_{f(x)}$.
For any $y\in \cW^c(x)$ take $n$ such that $y\in [f^n(x),f^{n+1}(x))$. Then 
$$
\mu^c_{f^n(x)}\left([y,f(y))\right)=\mu^c_{f^n(x)}\left([y,f^{n+1}(x))\right)+\mu^c_{f^{n+1}(x)}\left([f^{n+1}(x),f(y))\right).$$
As $\mu^c_{f^{n+1}(x)}\left([f^{n+1}(x),f(y))\right)=\mu^c_{f^{n}(x)}\left([f^n(x),y)\right)$ we have $\mu^c_{f^n(x)}\left([y,f(y))\right)=1$. So by the choice of the normalization $\mu^c_x=\mu^c_{f^n(x)}=\mu^c_y$ for almost every $y\in \cW^c(x)$.

Now observe that for $x,y$ in the same $s/u$ manifold by the $s/u$ invariance we have that ${h^{s/u}_{x,y}}_*\mu^c_x=K_{x,y}\mu^c_y$, but by the invariance of the $s/u$ foliations by $f$, ${h^{s/u}_{x,y}}([x,f(x))])=[y,f(y))$ so $K_{x,y}=1$. 

By the $s/u$ invariance $x\mapsto \mu^c_x$, proceeding as in proposition~\ref{p.cont.dis} we can construct $\nu_x^c$ keeping the normalization, then we get $K_x,K'_x,K_{x,y}=1$.
\end{proof}

\subsection{Absolutely continuous center disintegrations}
A partially hyperbolic diffeomorphism is \emph{center-bunched}
if the functions $\nu,\widehat \nu,\gamma,\widehat \gamma\colon M\to \mathbb{R}$
in the definition of partial hyperbolicity (see the introduction) satisfy moreover:
$$\nu<\gamma\cdot\widehat \gamma\quad  \text{and} \quad \widehat \nu<\gamma\cdot\widehat \gamma.$$
This is satisfied when the center bundle is one-dimensional.

\begin{proposition}\label{p.su.implies.leb}
Let $f$ be a partially hyperbolic, quasi-isometric in the center, 
accessible, center bunched,  $C^2$ diffeomorphism.
For every ergodic measure $\mu$ with local cs$\times$u-product structure, full support and
satisfying $h(f,\mu)=h^u(f,\mu)=h^s(f,\mu)$,
the center disintegrations $\{\mu^c_x\}$ are equivalent to the Lebesgue measures
along center leaves.
\end{proposition}
\begin{proof}
By Proposition~\ref{p.cs.prod.struct}, $\{\mu^u_x\}$ is cs-quasi-invariant.
By the Main Theorem, $\{\mu^c_x\}$ is s- and u-invariant.
Since $\mu$ has full support, by Proposition~\ref{p.cont.dis}, one can find a family of local center measures
$\{\nu^c_x\}_{x\in M}$ which extends the center disintegration of $\mu$
and which is continuous with respect to $x$.

Given $x,y \in M$, by accessibility there exists a su-path from $x$ to $y$. Moreover by compactness of $M$ the number of legs and the lengths of each leg can be taken uniformly bounded (independent from the points $x,y$), see \cite[Lemma~4.5]{Wliv}. 

Fix $x,y\in M$ and the su-path connecting $x$ to $y$.
It induces a map $h:B^c_\gamma(x)\to \cW^c(y)$ for some $\gamma>0$,
satisfying  $h(x)=y$ and defined as a composition of stable and unstable holonomies. The center bunching condition implies that the holonomies are Lipchitz inside center stable/unstable manifolds, uniformly in paths with boun\-ded lengths, see \cite[Theorem~B]{PSW97},
hence $h$ is absolutely continuous. As the number and lengths of legs are bounded, the Jacobian of $h$ is uniformly bounded by some constant $K>1$.
Then for every $\varepsilon>0$ sufficiently small we have
$$
K^{-1} \vol^c(B^c_\varepsilon(x))\leq \vol^c(h(B^c_\varepsilon(x))) \leq K \vol^c(B^c_\varepsilon(x)).
$$
By s- and u-invariance of $\{\nu^c_x\}$ we get for any $x,y$:
$$
{K_{xy}}^{-1} \frac{\nu^c_x(B^c_\varepsilon(x))}{\vol^c(B^c_\varepsilon(x))}\leq \frac{\nu^c_y (h(B^c_\varepsilon(x)))}{\vol^c(h(B^c_\varepsilon(x)))} \leq K_{xy} \frac{\nu^c_x(B^c_\varepsilon(x))}{\vol^c(B^c_\varepsilon(x))},
$$
where $K_{xy}>0$ depends on $x,y$ but not on $\varepsilon$.

By the uniformly Lipchitz property there exists $C>0$ such that  $B^c_{C^{-1}\varepsilon}(y)\subset h(B^c_\varepsilon(x))\subset B^c_{C\varepsilon}(y)$. Hence:
\begin{equation}\label{e.density}
{K_{xy}}^{-1} \frac{\nu^c_y(B^c_{C^{-1}\varepsilon}(y))}{\vol^c(B^c_{C\varepsilon}(y))}\leq
\frac{\nu^c_x (B^c_\varepsilon(x))}{\vol^c(B^c_\varepsilon(x))} \leq
K_{xy} \frac{\nu^c_y(B^c_{C\varepsilon}(y))}{\vol^c(B^c_{C^{-1}\varepsilon}(y))}.
\end{equation}

If some center measure $\mu^c_z$ has a singular part with respect to the Lebesgue measure,
there exists $x$ such that $\lim_{\varepsilon\to 0} \frac{\nu^c_x(B^c_\varepsilon(x))}{\vol^c(B^c_\varepsilon(x))}=+\infty$. On the other hand, for Lebesgue almost every point $y\in \cW^c(x)$,
$\limsup_{\varepsilon\to 0} \frac{\nu^c_y(B^c_{C\varepsilon}(y))}{\vol^c(B^c_{C^{-1}\varepsilon}(y))}$ is finite.
This contradicts~\eqref{e.density}.
Analogously if $\vol^c$ has a singular part with respect to $\mu^c_z$ it will contradict \eqref{e.density}.
We have proved that $\mu^c_z$ is equivalent to $\vol^c$.
\end{proof}

\begin{corollary}\label{c.su.1d.implies.flow}
Let $f$ be an accessible $C^2$ discretized Anosov flow.
If there exists an ergodic measure $\mu$, with local cs$\times$u-product structure, full support and zero center Lyapunov exponent then $f$ is the time-$1$ map of a topological Anosov flow which is as smooth as $f$ along the center direction.
\end{corollary}
This proof follows~\cite{AVW11}; for completeness we give the details.
\begin{proof}
Since the center bundle is one-dimensional, $f$ is center-bunched.
Since $\mu$ has zero center Lyapunov exponent, $h(f,\mu)=h^u(f,\mu)=h^s(f,\mu)$,
hence by Proposition~\ref{p.su.implies.leb} the measures $\{\mu^c_x\}$ are equivalent to the Lebesgue measure along the center leaves.

Moreover by the Main Theorem, $\{\mu^c_x\}$ is s- and u-invariant.
Since $\mu$ has local cs$\times$u-product structure, $\{\mu^u_x\}$ is cs-invariant. Proposition~\ref{p.cont.dis} and Addendum~\ref{a.cont.dis} can be applied. One can thus normalize the center measure by requiring $\mu^c_x([x,f(x)))=1$.
One also gets an s- and u-invariant continuous family of local center measures $\{\nu^c_x\}$ and $\delta>0$
such that $\nu^c_x|_{B^c(x,\delta)}=\mu^c_x|_{B^c(x,\delta)}$
for $\mu$-almost every point $x$.

Since $\nu^c_{x}$ is equivalent to the Lebesgue measure on $\cW^c(x)$ for $\mu$-almost every point $x$,
the following limit exists:
$$\Delta(x):=\lim_{\varepsilon\to 0} \frac{\nu^c_{x}(B^c_\varepsilon(x) }{\vol^c(B^c_\varepsilon(x))}.$$
Since $\{\nu^c_y\}$ is s- and u-invariant and since s- and u-holonomies are $C^1$
inside the center stable and unstable leaves~\cite[Theorem~B]{PSW97},
the limit $\Delta(y)$ exists at any point $y$ that can be connected to such a point $x$ by an su-path. Since $f$ is accessible, $\Delta(x)$ exists at any point $x\in M$.

The family $\{\nu^c_x\}$ is continuous, hence $\Delta$ is the pointwise limit of continuous functions $x\mapsto \frac{\nu^c_{x}(B^c_\varepsilon(x)) }{\vol^c(B^c_\varepsilon(x))}$.
Baire's Theorem implies that $\Delta$ admits a continuity point $x_0$ (see~\cite[Theorem~7.3]{Ox80}).
Let us consider any point $x\in \cW^u(x_0)$. There exists a homeomorphism
$\fh^u$ from a neighborhood $U$ of $x_0$ to a neighborhood $V$ of $x$
such that for any $y\in U$, the restriction $\fh^u$ to $\cW^c_{loc}(y)\cap U$ takes its values
in $\cW^c_{loc}(\fh^u(y))\cap V$ and coincides with the unstable holonomy.
Since the unstable holonomies between center plaques are differentiable with uniformly continuous derivatives,
and since $\{\nu^c_x\}$ is u-invariant with a normalization $K_{x,y}=1$ (see Addendum~\ref{a.cont.dis}),
one deduces that $x$ is also a continuity point of $\Delta$.
The same argument holds for points $x\in \cW^s(x_0)$.
Since $f$ is accessible, the function $\Delta$ is continuous everywhere. 

For discretized Anosov flows, the center bundle is orientable (see~\cite{santiago}).
One can thus consider a continuous unit vector field $Z_0$ tangent to $E^c$ and define
$Z(x)=\frac{1}{\Delta(x)}Z_0(x)$. 
The flow $(\phi_t)$ generated by $Z$ is continuous and $C^n$ along the center direction.
Moreover, $\mu$-almost every point $x\in M$ satisfies $\mu^c_x[x,\phi_t(x))=t$, by our normalization of $\mu^c_x$
and the fact that at $\mu^c_x$-almost every point $y\in [x,\phi_t(x)]$, the measures
$\mu^c_x$ and $\nu^c_y$ coincides on a neighborhood of $y$ in $\cW^c(y)$.
So we conclude that $\phi_1(x)=f(x)$ on a dense subset of $\supp(\mu)$, hence $\phi_1=f$ everywhere.
By~\cite[Proposition 3.16]{santiago}, $(\phi_t)$ is a topological Anosov flow.
Moreover, by the same arguments as~\cite[Lemma~7.5]{AVW11} if $f$ is $C^n$, $n\in \natural$, then $\Delta$ is $C^n$,
and the same holds for the flow along the center leaves.
\end{proof}

\begin{remark}
When the center leaves are the fibers of a fiber bundle $\pi\colon M\to N$,
the dynamics induces a hyperbolic homeomorphism of N, see for example~\cite{ViY13}.
In this case, the existence of a continuous su-invariant center disintegration is obtained
under the assumption that the quotient measure $\pi_*\mu$ is locally equivalent to
a product between measures inside stable and unstable sets, see~\cite{Extremal}, \cite{ViY13}.

In our setting as the center foliation in general does not form a fiber bundle we don't have a natural projection to a quotient space. A natural definition for this projective product structure can be the following.

A measure $\mu$ has \emph{projective local product structure} if for $\mu$-almost every $x\in M$ there exists
a set $V=\cup_{z\in \cW^{su}_{r,\delta}(x)}B^c_{\ell}(z)$ with $r,\delta,\ell>0$ which satisfies:
\begin{itemize}
\item the sets $B^c_\ell(z)$ are pairwise disjoint for different $z\in \cW^{su}_{r,\delta}(x)$, so that
the projection along center discs $\pi^c:V\to \cW^{su}_{r,\delta}(x)$ is well defined;
\item $\nu=\pi^c_*\mu\mid_V$ is equivalent to a product measure $\nu^s\times \nu^u$ for the product structure on $\cW^{su}_{r,\delta}(x)$.
\end{itemize}
 
We believe that Theorem~\ref{p.cont.dis} can be adapted to measures having such projective local product structure, but as in our applications it is easier to check cs- or cu-quasi-invariance we did not explore this definition. 
\end{remark}


\section{measures of maximal entropy}\label{s.maximal.entropy}
In this section we prove Theorem~\ref{thm.MME}, Theorem~\ref{thm.MME.An.Fl} and Corollary~\ref{c.continuity.mme}.

\subsection{Perturbation of the time-$1$ map of Anosov flows}

\begin{proposition}\label{p.perturb.twisting}
Let  $(\phi_t)$ be a $C^r$ transitive Anosov flow, $1\leq r\leq \infty$.
There exists $f$ arbitrarily close to $\phi_1$ in $\Dif^r(M)$ such that:
\begin{itemize}
\item $f$ preserves the foliations $\cW^{cs},\cW^{cu}$ of $\phi_1$,
\item there exists a $f$-invariant compact center leaf $\cW^c(P)$ such that
the restriction $f|_{\cW^c(P)}$ has a Morse-Smale dynamics,
\item $\cW^u,\cW^s$ are minimal for any diffeomorphism $C^1$ close to $f$,
\item $f$ is accessible for any diffeomorphism $C^1$ close to $f$.
\end{itemize}
\end{proposition}
\begin{proof}
Let $Z$ be the vector field of the Anosov flow.
We follow \cite{BD96} to build a diffeomorphism $f$ arbitrarily $C^r$-close to $\phi_1$
such that for any perturbation $g$ of $f$ there exist periodic points
whose homoclinic intersections are dense in $M$.
The perturbation is done along the orbits of the flow $Z$
in such a way that a compact center leaf supports a Morse-Smale dynamics.
The two first items will thus be satisfied.

In~\cite{BD96}, $f$ is a perturbation of the time-$\tau$ map
of the flow where $\tau$ is the period of a periodic point of $(\phi_t)$.
The proof can be easily adapted in the following way.
First one perturbs the parametrization of the flow so that:
\begin{itemize}
\item[--] there is a center leaf $\mathcal{C}$ and an integer $k\geq 1$ such that $\phi_k|_\mathcal{C}=\id$,
\item[--] $\mathcal{C}$ contains exactly two periodic orbits,
\item[--] there is a curve $\gamma\subset \mathcal{C}$
bounded by two periodic points, $P$ attracting and $Q$ repelling, which does not contain any other periodic point,
\item[--] for some $m\in \mathbb{Z}$, the intersection $f^m(\cW^u(P))\cap \cW^s(Q)$ is non empty.
\end{itemize}
We continue as in the proof of Theorems~A and~2.1 in~\cite{BD96}.
With a second perturbation along center leaves,
we get the following open property:
\begin{itemize}
\item[(a)] The continuation of the points $P$ and $Q$ belong to two blenders.
\end{itemize}
In particular, for any diffeomorphism $g$ $C^1$-close, the hyperbolic continuations $P_g$ and $Q_g$
have the following properties: the closure of $\cW^u(P_g)$ contains the unstable manifold of $Q_g$
and the closure of $\cW^s(Q_g)$ contains the stable manifold of $P_g$, see Lemma 1.12 in~\cite{BD96}.

With a third perturbation, we get a new open property:
\begin{itemize}
\item[(b)] 
$\cW^u(Q)$ intersects stable manifold of the orbit of $P$,
 and $\cW^s(P)$ intersects the unstable manifold of the orbit of $Q$.
\end{itemize}
Indeed the foliation $\cW^{cs}$ for $f$ coincides with the center stable foliation of the transitive Anosov flow, hence is minimal. One deduces that $\cW^u(Q)$ intersects the local center-stable leaf of $P$.
If it does not intersects the stable manifold of the orbit of $P$, it intersects $\cW^s(Q)$:
this intersection is fragile and can be removed after a perturbation which preserves the foliations
$\cW^{cs}$ and $\cW^{cu}$.
Hence $\cW^u(Q)$ intersects the stable manifold of the orbit of $P$.
This property is open, we can repeat this construction and get a perturbation such that
$\cW^s(P)$ intersects the unstable manifold of $Q$ as wanted.

Finally we perform a fourth perturbation preserving the center foliation as in~\cite[Lemma A.4.3]{HHU08b}, which gives:
\begin{itemize}
\item[(c)] The system is not jointly integrable, i.e.
there does not exist a foliation whose leaves are tangent to
the bundle $E^s\oplus E^u$.
\end{itemize}
At this stage we have built the diffeomorphism $f$ and we have to check the two last properties
of the lemma.

We claim that the following open property holds:
\begin{claim}
Any strong unstable disc $D$ intersects the stable manifold of the orbit of $P$.
In particular the stable manifold of the orbit of $P$ is dense in $M$.
The analogous property holds for strong stable discs and the unstable manifold of the orbit of $Q$.
\end{claim}
\begin{proof}
As explain above, $D$ intersects $\cW^{cs}(P)$.
We only have to examine the case where $D$
intersects $\cW^u(f^k(Q))$ for some $k\in \mathbb{Z}$,
since otherwise the conclusion holds immediately.
In this case some of the iterates of $D$ will accumulate on $\cW^{u}(Q)$
and by (b) we conclude that $D$ intersects the stable manifold of the orbit of $P$.
\end{proof}

Let us assume for the moment that $P$ and $Q$ are fixed.
By compactness, the claim implies that a local stable manifold of $P$
intersects any strong unstable disc of radius $R$ large enough.
For any leaf $L$ of $\cW^u$ and $n\geq 1$ large enough,
the preimage $f^{-n}(L)$ contains an unstable disc of radius $R$,
hence by the inclination lemma the closure of $L$ contains $\cW^u(P)$
and, by (a), contains the unstable manifold of $Q$, which is dense in $M$ by the claim.
We have thus proved that the foliation $\cW^u$ is minimal.

When $P$ and $Q$ are not fixed, but have a common period $\tau$,
the same argument proves that $L\cup f(L)\cup\cdots\cup f^{\tau-1}(L)$
is dense in $M$.
Let us choose $L'$ in the closure of $L$ such that
$\Lambda=\overline{L'}$ is minimal for the inclusion.
Let $\ell\geq 1$ be the smallest integer such that
$M=\Lambda\cup f(\Lambda)\cup\cdots\cup f^{\ell}(\Lambda)$.
By minimality of $\Lambda$ and $\ell$, the intersections
$\Lambda\cap f^i(\Lambda)$ with $1< i\leq \ell$ (which are saturated by $\cW^u$-leaves) are empty.
Since $M$ is connected, one deduces $\ell=1$, hence $L$ is dense in $M$
and $\cW^u$ is minimal.
The same holds for the foliation $\cW^s$. This gives the third item.

The accessibility class $AC(x)$ of a point $x$ is the set of points $y$
that can be connected by a path which is piecewise tangent to $E^s$ or $E^u$.
By (c) there exists a point $x$ whose accessibility class $AC(x)$
is open, see~\cite[Lemma 3]{Di03}.
By the dynamical minimality of the unstable foliation we can joint any point to an iterate of $AC(x)$.
This implies that the iterates of $AC(x)$ cover $M$.
Since accessibility classes are either disjoint or equal, and that $M$ is connected
(since it supports a transitive flow), we have $M=AC(x)$ and $f$ is accessible.
By~\cite{Di03} the accessibility stil holds for diffeomorphisms that are $C^1$-close to $f$.
This settles the proof of the last item and of the proposition.
\end{proof}

\subsection{Proof of Theorem~\ref{thm.MME}}
By Corollary~\ref{cor.zero.exp}, $\{\mu^c_x\}$ is s- and u-invariant.
By \cite[Theorems~3.3 and~3.6]{buzzi-fisher-tahzibi}, since the foliations $\cW^s$ and $\cW^u$ are minimal,
the family $\{\mu^u_x\}$ is cs-quasi-invariant and $\mu$ has full support.
By Proposition~\ref{p.cont.dis} and Addendum~\ref{a.cont.dis},
one can normalize center measures by $\mu^c_x([x,f(x)))=1$
and consider a continuous family of local center measures $\{\nu^c_x\}_{x\in M}$.

Let us assume by contradiction that the center disintegrations of $\mu$ admit atoms.
Since $\mu$ is ergodic, $\mu$-almost every $x\in M$ satisfies $\mu^c_x(\{x\})>0$.
Let us fix such a point $x$.
The dynamics induced on compact center leaves has zero entropy, hence $\cW^c(x)$ is not compact.
Let us consider the interval $[x,f(x)]\subset \cW^c(x)$. By the minimality of $\cW^u$, there exist an infinite sequence of different points $y_i\in [x,f(x)]$ such that $\cW^s(y_i)\cap \cW^u(x)\neq \emptyset$. So we can define a sequence of maps $\fh_i$ that are compositions of stable and unstable holonomies such that $\fh_i(x)=y_i$.
By the su-invariance we have that $\mu^c_x(y_i)=\mu^c_x(x)$, a contradiction because $\mu^c([x,f(x)))=1$.

We now assume that $f$ is accessible. By Proposition~\ref{p.su.implies.leb} the center disintegration is absolutely continuous with respect to the Lebesgue measure. By Corollary~\ref{c.su.1d.implies.flow} we conclude that $f$ is the time-$1$ map
of a topological Anosov flow which is as smooth as $f$ alongs its orbits.

Let us prove the uniqueness of the m.m.e.
We consider two ergodic m.m.e. $\mu,\nu$.
By \cite{buzzi-fisher-tahzibi} they both have zero center Lyapunov exponent.
Moreover there exists a family of Margulis measures $x\mapsto m^u_x$ along the unstable foliations such that for $\mu$-almost every $x\in M$, $\mu^u_x=K_x. m^u_x$ and for $\nu$-almost every $y\in M$, $\nu^u_y=K_y.m^u_y$, for some positive numbers $K_x,K_y$. Moreover, the family of measures $\{m^u_x\}$ is $cs$-quasi invariant and each measure $m^u_x$ has full support inside $\cW^u(x)$.

Let $B(\mu)$ and $B(\nu)$ be the basins of $\mu$ and $\nu$, i.e. the set of points whose forward orbits equidistribute towards $\mu$ and $\nu$ respectively. There exists a set $B'(\mu)$, $\mu(B'(\mu))=1$, such that $\mu^c_x(M\setminus B(\mu))=0$ for every $x\in B'(\mu)$ (recall that zero measure sets are the same for any normalization of $\mu^c_x$). One defines an analogous set $B'(\nu)$ associated to $\nu$.

By the $cs$-quasi invariance of $\{m^u\}$, there exist $x\in B'(\mu)$ and $y\in B'(\nu)$ close
such that $y\in \cW^{cs}(x)$. Let us consider the stable holonomy $\fh^s:B^c_\rho(x)\to \cW^c(y)$ for some $\rho>0$.
As $\mu^c_{x}$ and $\nu^c_{y}$ are absolutely continuous with respect to the Lebesgue measure along the center and the stable foliation is absolutely continuous inside the center-stable leaves, we have $\fh^s(B(\mu))\cap B(\nu)\neq \emptyset$. Since the basins are $s$-saturated, $B(\mu)\cap B(\nu)\neq \emptyset$ and this implies $\mu=\nu$.
\qed

\subsection{Proof of Theorem~\ref{thm.MME.An.Fl}}

We apply Proposition~\ref{p.perturb.twisting} and get a $C^1$ open set $\mathcal{U}\subset \Dif^r(M)$
such that every $f\in \mathcal{U}$ has a compact center leaf $\cW^c(P)$ where the dynamics is Morse-Smale
is accessible and has minimal foliations $\cW^s,\cW^u$.

Let us assume by contradiction that there exists an ergodic m.m.e. $\mu$ with zero center exponent.
Theorem~\ref{thm.MME} implies that $f$ is the time-$1$ map of a non-singular flow whose orbits coincide with
the leaves of the center foliation. In particular this contradicts the fact that the restriction of $f$ to $\cW^c(P)$ is Morse-Smale.

We have proved that all ergodic m.m.e. of $f$ have non-zero Lyapunov center exponents. By \cite[Theorem~1.1]{buzzi-fisher-tahzibi} this implies that there are exactly two ergodic m.m.e., one with positive and one with negative center Lyapunov exponent; moreover both are Bernoulli.
\qed

\subsection{Proof of Corollary~\ref{c.continuity.mme}}

Since $\mu_n$ is ergodic and has non-negative center Lyapunov exponent, $h^s(f_n,\mu_n)=h(f_n,\mu_n)$.
Let $\mu$ be any accumulation point of $\mu_n$. By \cite{Jiagang-entropy-along-expanding} we have $\limsup_{n\to \infty} h^s(f_n,\mu_n)\leq h^s(f,\mu)$, so by our hypothesis we get $h^s(f,\mu)=h_{top}(f)$. In particular $\mu$ is a m.m.e. of $f$.
By~\cite[Theorem~1.1]{buzzi-fisher-tahzibi} two cases occur.
When all m.m.e. of $f$ have zero center exponents, $\lambda^c(\mu_n)\to 0$ and the conclusion of the corollary follows.
Otherwise $f$ admits exactly two ergodic m.m.e.: one with positive center exponent $\mu^+$ and one with negative center exponent $\mu^-$. This is the case that we have to consider.

We have $\mu=a\mu^+ + (1-a) \mu^-$ for some $a\in [0,1]$.
Let us assume by contradiction that $a<1$. By \cite{HHW} the stable entropy is an affine function of the measure, so we conclude $h^s(f,\mu^-)=h_{top}(f)$. As $\mu^-$ has a negative center exponent we get $h^s(f,\mu^-)=h^u(f,\mu^-)=h(f,\mu)$. We can apply~\cite{LY85b, ledrappier-xie} and concludes that the center disintegration of $\mu^-$
along $\cW^c(x)$ has an atom at $x$. This contradicts Theorem~\ref{thm.MME}.
We thus deduce that $(\mu_n)$ converges towards the m.m.e. $\mu^+$.
\qed



\section{Physical Measures}\label{s.phys.meas}
In this section we prove Theorems~\ref{thm.phys.meas.close.anosov} and~\ref{thm.phys.meas}.

\subsection{U-Gibbs measures with non-positive center exponents}
We first study the u-Gibbs measures without assuming the accessibility.

\begin{theorem}\label{t.physical.non.accessible}
Let $f$ be a partially hyperbolic, qua\-si-\-isometric in the center, $C^r$ diffeomorphism ($r>1$)
with 1D-center. If the u-foliation is minimal and
the center-stable foliation is absolutely continuous either:\newline
-- every u-Gibbs measure has zero center Lyapunov exponents,\newline
-- or is a unique u-Gibbs measure; its center Lyapunov exponent is negative.
\end{theorem}

The uniqueness when the center exponent is negative is classical:

\begin{proposition}\label{p.physical.unique.positive}
Let $f$ be a $C^r$ partially hyperbolic diffeomorphism ($r>1$),
whose u-foliation is minimal. If there is a u-Gibbs measure with negative center exponent, then it is the unique u-Gibbs measure and its basin has full Lebesgue measure
\end{proposition}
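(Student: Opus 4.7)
The plan is to establish, in order: (a) the basin $B(\mu)$ has positive Lebesgue measure, (b) $\mu$ is the unique ergodic Gibbs u-state, and (c) the basin has full Lebesgue measure. Passing to an ergodic component (which inherits the Gibbs u-state property and negativity of the center exponents), I may assume $\mu$ is ergodic.

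For (a), since all Lyapunov exponents along $E^{cs}=E^s\oplus E^c$ are negative, Pesin theory provides, for $\mu$-a.e.\ $x$, a Pesin stable manifold $\Sigma^s(x)$ of dimension $d^s+d^c$, tangent to $E^{cs}$, contained in $B(\mu)$. Minimality of $\cW^u$ together with the $\cW^u$-saturation of $\supp(\mu)$ forces $\supp(\mu)=M$, so the Gibbs u-state conditional $\mu^u_x$ is equivalent (not merely absolutely continuous) to $\leb_{\cW^u(x)}$. Fixing a Pesin block $\Lambda\subset M$ of uniform Pesin-manifold size with $\mu(\Lambda)>0$, for $\mu$-a.e.\ $x\in\Lambda$ we have $\leb_{\cW^u(x)}(\cW^u(x)\cap\Lambda)>0$; then Pesin absolute continuity of the $\Sigma^s$-lamination yields that $E:=\bigcup_{x\in\Lambda}\Sigma^s(x)\subset B(\mu)$ has positive Lebesgue measure in $M$.

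For (b), suppose for contradiction that $\mu'\neq\mu$ is another ergodic Gibbs u-state. By minimality of $\cW^u$, $\supp(\mu')=M$, so $\mu'^u_z$ is equivalent to $\leb_{\cW^u(z)}$ for $\mu'$-a.e.\ $z$. Fix $y_1\in\Lambda$ with $\leb_{\cW^u(y_1)}(\cW^u(y_1)\cap\Lambda)>0$; by Pesin absolute continuity, viewing $\cW^u$-leaves as transversals to the $\Sigma^s$-lamination, there is an open neighborhood $V$ of $y_1$ in $M$ such that for every $y_2\in V$ one has a well-defined absolutely continuous $\Sigma^s$-holonomy from a subset of $\cW^u(y_1)$ into $\cW^u(y_2)$, and consequently $\leb_{\cW^u(y_2)}(E\cap\cW^u(y_2))>0$. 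Density of $\mu'$-generic points in $M$ provides a $\mu'$-generic $z\in V$; then $\leb_{\cW^u(z)}(B(\mu))>0$, whence $\mu'^u_z(B(\mu))>0$ by equivalence, and integrating gives $\mu'(B(\mu))>0$. As $B(\mu)$ is $f$-invariant and $\mu'$ is ergodic, $\mu'(B(\mu))=1$; applying Birkhoff to $\mu'$ forces $\mu'=\mu$, the sought contradiction.

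For (c), I invoke the classical Pesin--Sinai / Bonatti--Viana result that for a $C^r$ partially hyperbolic diffeomorphism, for Lebesgue-almost every $x\in M$ every weak-$*$ accumulation point of $\tfrac{1}{n}\sum_{i=0}^{n-1}\delta_{f^i(x)}$ is a Gibbs u-state. Combined with the uniqueness from (b), this forces the empirical measures to converge to $\mu$ for $\leb$-a.e.\ $x$, proving $\leb(M\setminus B(\mu))=0$.

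The hard part will be step (b), and specifically the propagation of the positive-Lebesgue intersection $\cW^u(y_1)\cap E$ to nearby $\cW^u$-leaves via Pesin absolute continuity; this requires careful use of the uniform size and continuous dependence of $\Sigma^s(x)$ on $x\in\Lambda$, together with the fact that the $\Sigma^s$-holonomy between two $\cW^u$-transversals preserves Lebesgue-zero sets. Once this technical input from Pesin theory is in place, the remainder of the argument is rigid.
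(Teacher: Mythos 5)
Your proposal follows the same route as the paper: use the negativity of all exponents along $E^s\oplus E^c$ to produce Pesin stable manifolds of dimension $d^s+d^c$ inside $B(\mu)$, then use minimality of $\cW^u$ (so every unstable leaf is dense and any competing ergodic Gibbs u-state is fully supported) together with the absolute continuity of the Pesin stable lamination to force any other Gibbs u-state to see $B(\mu)$ with positive measure, and finally invoke the standard fact that Lebesgue-a.e.\ point accumulates only on Gibbs u-states. Your write-up is more detailed (explicit Pesin block, explicit reduction to the ergodic case, explicit holonomy step), but the decomposition and the key ingredients coincide with the paper's proof.
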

\begin{proof}
Let $\mu$ a u-Gibbs measure of $f$ with $\lambda^c(\mu)<0$.
There exists $x\in M$, $A\subset \cW^u(x)$ and $\delta>0$ such that
$\vol^u(A)>0$,  $A\subset B(\mu)$ and for every $x\in A$ the center-stable ball $B^{cs}_\delta(x)$ is contained in the Pesin stable manifold of $x$. Moreover the set of leaves $\cF^s:=\{B^{cs}_\delta(x), x\in A\}$ defines an absolutely continuous lamination.

Let us consider any u-Gibbs measure $\mu'$ and some unstable leaf
$\cW^u(y)$ such that $\vol^u(\cW^u(y)\setminus B(\mu'))=0$.
Since $\cW^u$ is a minimal foliation, $\cW^u(y)$ is dense.
Since the center-stable lamination $\cF^s$ is absolutely continuous,
$\cup_{x\in A}B^{cs}_\delta(x)$ intersects a subset of $\cW^u(y)$ which has positive volume for $\vol^u$.
So $B(\mu')\cap B(\mu)\neq 0$, which implies $\mu'=\mu$ and the uniqueness of the u-Gibbs measure.
By~\cite[Corollary 1.2]{CYZ}, its basin has full volume.
\end{proof}

Theorem~\ref{t.physical.non.accessible} is now a consequence of the following result.
For partially hyperbolic skew products, the analogous statement has been proved in~\cite{ViY13}.

\begin{proposition}\label{p.cs-abs-positive}
Let $f$ be a partially hyperbolic, qua\-si-\-isometric in the center, $C^1$ diffeomorphism
with 1D-center. If the u-foliation is minimal and
the center-stable foliation is absolutely continuous, then  every u-Gibbs measure $\nu$
has non-positive center Lyapunov exponent.
\end{proposition}

\begin{proof}
We follow~\cite{ViY13}. Arguing by contradiction, we assume that there is an ergodic u-Gibbs measure $\nu_*$ such that 
$$
\lambda:=\int \log \norm{Df\mid_{E^c(x)}}d \nu_*(x)>0.
$$
By ergodicity, we get $\nu_*(\Gamma)=1$ where $\Gamma$ is defined as:
$$
\Gamma:=\left\lbrace x\in M: \lim_{n\to \infty} \frac{1}{n} \sum_{j=1}^n \log \norm{Df\mid_{E^c(g^j(x))}}\geq\lambda/2\right\rbrace.
$$

\begin{claim}\label{c.bound.gamma}
For every $r>0$ there exists $n_r\in \natural$ such that $\# B_r^c(x)\cap \Gamma< n_r$.
\end{claim}
\begin{proof}
First observe that \cite[Lemma~3.8]{ViY13} remains valid in our setting, so there exists $\delta>0$ such that every neighborhood $U\subset \cW^c(x)$ of any $x\in \Gamma$ satisfies $\liminf \frac{1}{n} \sum_{i=0}^{n-1}\vol^c(f^{i}(U))\geq \delta$, where $\vol^c$ denotes the length along the center.
Let $K_0,c_0$ be the constants in the Definition~\ref{d.quasi-isometric} of quasi-isometric center
and let $L=\sup_{y\in M}\vol^c(B^c_{K_0 r+c_0}(y))$. We fix $n_r> L/\delta$.

Let us suppose by contradiction that there exist different $y_j \in B^c_r(x)\cap \Gamma$ with $j=1,\dots,n_r$
and let us choose disjoint neighborhoods $U_j\subset B^c_r(x)$ containing each of them.
By the quasi-isometric center property,
$$
L\geq  \frac{1}{n}\sum_{i=0}^{n-1} \vol^c(f^{i}(B^c_r(x)))\geq \sum_{j=1}^{n_r}\frac{1}{n}\sum_{i=0}^{n-1} \vol^c(f^{il}(U_j)).
$$
Taking the limit we get $L\geq n_r\delta$, a contradiction.
\end{proof}

Let us fix $r>0$ small and $n_r$ point $x_1,\dots, x_{n_r}$
in a same center ball $B^c_{r/2}(x_0)$.
Since $\cW^u$ is minimal, there exists points $y_i$ arbitrarily close to $x_i$
such that $B(\mu)$ has full Lebesgue measure inside $\cW^u(y_i)$.
Since $\cW^{cs}$ is absolutely continuous, there exists $x$ close to $x_0$ in $\cW^u(x_0)$
such that the intersection $y'_i$ between $\cW^{cs}_{loc}(x)$ and $\cW^u_{loc}(y_i)$
belongs to $B(\mu)$ for each $i$.
Let $z_i$ be the intersection between $\cW^s_{loc}(y'_i)$ and $\cW^c_{loc}(x)$.
We have found $n_r$ different points inside $B^c_r(x)$.
Since they belong to $B(\mu)$ and $E^c$ is one dimensional,
these points belong to $\Gamma$.
This contradicts the claim above.
The proof of Proposition~\ref{p.cs-abs-positive} is now complete.
\end{proof}

\subsection{Proof of Theorem~\ref{thm.phys.meas}}
Theorem~\ref{thm.phys.meas} will be a consequence of following more general result.
\begin{theorem}\label{thm2.phys.meas}
Let $f$ be a partially hyperbolic, accessible, qua\-si-\-isometric in the center, $C^r$ diffeomorphism ($r>1$) with one-dimensional center. If $\cW^u$ is minimal and
$\cW^{cs}$ is absolutely continuous, any u-Gibbs measure is a physical measure whose basin has full volume.

Moreover it is unique and its center Lyapunov exponent is non-positive.
\end{theorem}
\begin{proof}
By Theorem~\ref{t.physical.non.accessible} we are reduced to consider the case where all u-Gibbs measure
have their center Lyapunov exponent equal to zero. Let $\mu$ be one of them.

As the foliation $\cW^{cs}$ is absolutely continuous with respect to the Lebesgue measure
and $\mu$ has disintegrations along the $\cW^u$ leaves which are equivalent to the Lebesgue measure,
$\mu^u$ is cs-quasi-invariant and by Proposition~\ref{p.cs.prod.struct} has local cs$\times$u-product structure.
Since $\cW^u$ is minimal, $\mu$ has full support.
Since the center Lyapunov exponent vanishes,
$h(f,\mu)=h^u(f,\mu)=h^s(f,\mu)$.
By Proposition~\ref{p.su.implies.leb}, the center disintegrations $\mu^c_x$ are equivalent to the Lebesgue measure.

\begin{claim}
The center-stable disintegrations $\mu^{cu}_x$ are equivalent to
the Lebes\-gue measure along the center-unstable leaves.
\end{claim}
\begin{proof}
As before we can find $\varepsilon>0$ sufficiently small, such that for every $x\in M$, the map $\Psi:B^{c}_\varepsilon(x)\times B^u_\varepsilon(x)\to \cW^{cu}(x)$ defined by $\Psi(y,z)= \fh^u_{z}(y)$ is a homeomorphism over its image, denoted by $\cN^{cu}_\varepsilon(x)$. Let $\pi^u:\cN^{cu}_\varepsilon(x)\to B^c_\varepsilon(x)$ be the projection given by this product structure.

Restricted to $\cN^{cu}_\varepsilon(x)$ we have
$$
\mu^{cu}_x\mid_{\cN^{cu}_\varepsilon(x)}=\int_{B^c_\varepsilon(x)} \mu^u_y d\pi^u_*\mu^{cu}_x, 
$$

As $\mu$ is a u-Gibbs measure, $\mu^u_y$ is equivalent to the Lebesgue measure along $\cW^u(y)$.
Since $\cW^u$ is absolutely continuous,
it remains to prove that $\pi^u_*\mu^{cu}$ is equivalent to the Lebesgue measure on $B^c_\varepsilon(x)$.

To see this observe that for any measurable set $A\subset B^c_\varepsilon(x)$, and
$\mu^{cu}_x$-almost every points $z\in \cN^{cu}_\varepsilon(x)$,
we have $\mu^c_{z}(\cW^c_{loc}(z)\cap({\pi^u}^{-1}(A)))=0$ if and only if $\vol^c(\cW^c_{loc} (z)\cap({\pi^u}^{-1}(A)))=0$
(using that $\mu^c_z$ is equivalent to $\vol^c$),
if and only if $\vol^c(B^c_\varepsilon (x)\cap A)=0$ (since $\cW^u$ is absolutely continuous with respect to the Lebesgue measure inside the center-unstable leaves).
Consequently $\vol^c(B^c_\varepsilon (x)\cap A)=0$ if and only if $\mu^{cu}_x( \cN^{cu}_\varepsilon(x)\cap({\pi^u}^{-1}(A)))=0$,
which is equivalent to $\pi^u_*\mu^{cu}(A)=0$.
\end{proof}

Fix $x\in \supp(\mu)$, a cu-disk $D^{cu}$ centered at $x$ and $D= \cup_{y\in D^{cu}}\cW^s_{loc}(y)$.
There exists $x'$ close to $x$ such that $\vol^{cu}_{x'}(\cW^{cu}(x')\setminus B(\mu))=0$ and
$\cW^s_{loc}(y)\cap \cW^{cu}(x')\neq \emptyset$ for every $y\in D^{cu}$.
Using the previous claim and that $\cW^s$ is absolutely continuous,
Lebesgue almost every point $y$ inside $D^{cu}$
satisfies $\cW^s_{loc}(y)\cap B(\mu)\neq \emptyset$.
As $B(\mu)$ is $\cW^s$--saturated $\vol(D\setminus B(\mu))=0$. 
Since $\mu$ has full support, one deduces that $B(\mu)$ has full Lebesgue measure in $M$.

We have proved that $\mu$ is physical and its basin has full Lebesgue measure.
In particular, there is no other physical measure, hence no other u-Gibbs measure.
\qed

\subsection{Proof of Theorem~\ref{thm.phys.meas.close.anosov}}
Let  $(\phi_t)$ be a $C^r$ transitive Anosov flow ($2\leq r\leq \infty$),
let $f_0$ be the diffeomorphism given by Proposition~\ref{p.perturb.twisting} and let $\mu$ be a u-Gibbs measure of $f_0$. As $f_0$ is $C^1$ close to $\phi_1$, it is a discretized Anosov flow. By construction $\cW^{cs}$ coincides with the center stable foliation of the Anosov flow $(\phi_t)$. In particular is absolutely continuous
(see~\cite[Theorem 7.1.25]{fischer-hasselblatt}.
By Theorem~\ref{thm.phys.meas}, there exists a unique u-Gibbs measure $\mu$; moreover its center Lyapunov exponent is non-positive. By minimality of the strong unstable foliation $\cW^u$, it has full support.
As the foliation $\cW^{cs}$ is absolutely continuous with respect to the Lebesgue measure
and $\mu$ has disintegrations along the $\cW^u$ leaves which are equivalent to the Lebesgue measure,
$\mu^u$ is cs-quasi-invariant and by Proposition~\ref{p.cs.prod.struct} has local cs$\times$u-product structure.

If the center exponent is zero, then by Corollary~\ref{c.su.1d.implies.flow} $f_0$ is the time one map of a topological flow whose orbits coincides with the center leaves.
This is a contradiction because $f_0\mid_{\cW^c(P)}$ has a Morse-Smale dynamics.
So we conclude that the center exponent of $\mu$ is negative.

By \cite{Jiagang-entropy-along-expanding} there exists a $C^1$ open neighborhood $\mathcal{U}$ of $f_0$ in $\Dif^r(M)$
such that for every diffeomorphism $f$ in $\mathcal{U}$, every u-Gibbs measure has negative center exponent.
Proposition~\ref{p.physical.unique.positive} then implies that $f$ has a unique u-Gibbs measure,
and its basin has full Lebesgue volume. This measure is thus the unique physical measure of $f$.

\end{proof}


\bibliographystyle{plain-like-initial}

\information

\end{document}